\documentclass[12pt]{amsart}  

\usepackage{bm}
\usepackage{bbold}
\usepackage{amscd} 
\usepackage{amsmath,amssymb}
\usepackage{mathtools}
\usepackage{mathrsfs}
\newtheorem{theorem}{Theorem}[section]
\newtheorem{lemma}[theorem]{Lemma}
 
\theoremstyle{definition}
\newtheorem{definition}[theorem]{Definition}

\newtheorem{remark}[theorem]{Remark} 
\numberwithin{equation}{section}

\def\dom{\text{\rm dom}}
\def\ran{\text{\rm ran}}

\def\RE{\mathbb R}

\def\ph*{\phi_\star}

\def\be{\begin{equation}}
\def\ee{\end{equation}}

\def\max{{\rm max}}

\def\-{{\rm in}}
\def\+{{\rm ex}}

\def\ve{\varepsilon}

\title[Sign-indefinite transmission conditions and near-zero 
resonances]{Sign-indefinite transmission conditions and near-zero eigenvalues and
resonances }
\author{A. Mantile, A. Posilicano}
\address{A. Mantile
\newline Laboratoire de Math\'{e}matiques de Reims, UMR 9008 CNRS, Universit\'{e} de Reims Champagne-Ardenne, Moulin de la Housse BP 1039, 51687
Reims, France.
\newline CNRS LYSM IRL 2019}
\address{A. Posilicano
\newline DiSAT, Sezione di Matematica, Universit\`a dell'Insubria, via Valleggio 11, I-22100
Como, Italy}
\email{andrea.mantile@univ-reims.fr}
\email{andrea.posilicano@unisubria.it}

\begin{document}

\begin{abstract} We consider  the linear operator for metamaterials modeling  $$L_{c,d }=(\chi_{\RE^{3}\backslash\overline \Omega}+c)\nabla\!\cdot (\chi_{\RE^{3}\backslash\overline \Omega}+d)\nabla\,,$$ where the material parameters $c>0$ and $d$ are constant on each connected component of the bounded domain $\Omega$, $d$ is negative on at least one component of $\Omega$ and transmission boundary conditions hold at the interfaces. It turns out that  $L_{c,d }$ has, as in the sign-definite case, an absolutely continuous spectrum equal to $[0,+\infty)$; however, contrary to the latter, it has a non-empty point spectrum; in particular, we find conditions on the parameters so that zero is a threshold eigenvalue.
In the case $\Omega$ is connected, we study how such a threshold eigenvalue behaves under $\varepsilon$-small analytic perturbations of the parameters. We show that, depending on the position of the number $\frac12\,\frac{1+d(\ve)}{1-d(\ve)}$ relative to the nearest eigenvalue of the Neumann-Poincar\'e operator of the interface, the zero eigenvalue splits into two branches (analytic in $\varepsilon^{1/2}$), consisting either of two branches of resonances, or of one branch of negative eigenvalues and one of resonances.
\end{abstract}

\maketitle

\section{Introduction and main results}

\subsection{The modeling operator $L_{c,d}$}
Let us consider the domains
\begin{equation}
\Omega\equiv\Omega_{1}\cup\dots\cup\Omega_{n}\,, \qquad \Omega_{\mathrm{ex}}:=\mathbb R^{3}\backslash\overline\Omega\,,\label{Omega_def}%
\end{equation}
where the open, bounded and simply connected domains $\Omega_{\ell}%
\subset\mathbb{R}^{3}$, $\ell=1,\dots,n$, are disjoints with $\mathcal{C}^{2}$
boundaries $\Gamma_{1},\dots,\Gamma_{n}$. The lateral trace maps at
$\Gamma_{\ell}$ are denoted by $\gamma_{j,\ell}^{\mathrm{in/ex}}$,
$j=0,1$ ($j=0$ for the Dirichlet trace, $j=1$ for the Neumann trace
respectively). The respective traces and jumps are then defined by%
\begin{equation}
\gamma_{j,\ell}:=\frac{1}{2}\,(\gamma_{j,\ell}^{\mathrm{in}}+\gamma_{j,\ell
}^{\mathrm{ex}})\,,\qquad\left[  \gamma_{j}\right]  :=\gamma_{j,\ell
}^{\mathrm{ex}}-\gamma_{j,\ell}^{\mathrm{in}}\,,\qquad j=0,1\,. \label{global}%
\end{equation}
The lateral traces $\gamma_{j}^{\mathrm{in/ex}}$ at the global boundary
$\Gamma=\Gamma_{1}\cup\dots\cup\Gamma_{n}$ are given by the direct sums
\[
\gamma_{j}^{\mathrm{in/ex}}u:=\oplus_{\ell=1}^{n}\gamma_{j,\ell}%
^{\mathrm{in/ex}}u\,,\quad j=0,1\,;
\]
the traces and jumps $\gamma_{j}$ and $[\gamma_{j}]$ at $\Gamma$ are then
defined accordingly to (\ref{global}).
\par
Denoting by $\chi_{S}$ the characteristic function of a set $S$, we introduce the piecewise-constant functions supported on $\Omega$
\begin{equation}
c:=\sum_{\ell=1}^{n}
c_{\ell}\,\chi_{\Omega_{\ell}}\,,\qquad c_{\ell}>0\,,\label{c_def}
\end{equation}
\begin{equation}
d:=\sum_{\ell=1}^{n}
d_{\ell}\,\chi_{\Omega_{\ell}}\,,\qquad d_{\ell}\in\mathbb{R}\backslash
\left\{  -1,0\right\}  \,,  \label{d_def}%
\end{equation}
where
\begin{equation}
\left.
\begin{array}
[c]{l}%
d_{\ell}<0\text{ {for at least an index }}{\ell\in}
\left\{  1,...,n\right\}
\emph{\,,}\\
\\
c_{\ell}d_{\ell}\neq1\ 
\text{{for all }}{\ell\in}
\left\{
1,...,n\right\}  \emph{\,.}%
\end{array}
\right.  
\label{d_sign_ass}%
\end{equation}
We define the operator in $L^{2}(\mathbb{R}^{3})$
\begin{equation}
L_{c,d}u(x):=-\big((  \chi_{\Omega_{\mathrm{ex}}}+c)  \nabla\!\!\cdot\!(  \chi_{\Omega_{\mathrm{ex}}%
}+d) \nabla\big) u(x)\equiv\left\{
\begin{array}
[c]{l}%
-\Delta u(x)\qquad\qquad x\in\Omega_{\mathrm{ex}}\,,\\
-c_{\ell}d_{\ell}\,\Delta u(x)\qquad x\in\Omega_{\ell}\,,
\end{array}
\right.  \label{L_cd_def}%
\end{equation}
with $c$-independent domain (here and below we refer to the next subsection for notation)
\begin{equation}
{\mathrm{dom}}(L_{c,d})=\mathcal{D}_{d}:=\left\{  u\in H^{1}(\mathbb{R}^{3})\cap L_{\Delta}%
^{2}(\mathbb{R}^{3}\backslash\Gamma):\gamma_{1,\ell}^{\mathrm{ex}}u=d_{\ell
}\,\gamma_{1,\ell}^{\mathrm{in}}u\,,\quad\ell=1,...,n\,\right\}  \,.
\label{L_d_dom}%
\end{equation}
In the following, $L^{2}(\mathbb{R}^{3},{c}^{-1})$ ($\cong L^{2}(\mathbb{R}^{3})$ as Banach space) denotes the Hilbert space
of square integrable functions with scalar product
\[
\langle f,g\rangle_{L^{2}(\mathbb{R}^{3}\!,\,{c}^{-1})}:=\int_{\mathbb{R}^{3}%
}\overline{f}(x)g(x)\,(  \chi_{\Omega_{\+}%
}(  x)  +c(  x))\,  dx\,.
\]
Using (\ref{L_cd_def}), the
boundary conditions entering in the definition \eqref{L_d_dom} and the Green identities (and by $\gamma_{0,\ell}^{\+}u=\gamma_{0,\ell}^{\-}u=\gamma_{0,\ell}u$ for any $u\in H^{1}(\RE^{3})$), one gets, for any $u,v\in
\dom(L_{c,d})$, denoting by $u_{\+}$ and $u_{\ell}$ the restriciton of $u$ to $\Omega_{\+}$ and to $\Omega_{\ell}$ respectively, and by $\Delta_{\Omega_{\+}}^{\text{\textrm{max}}}$ and  $\Delta_{\Omega_{\ell}}^{\text{\textrm{max}}}$  the maximal
Laplacians,
 \begin{align}
&  \left.  \langle u,L_{c,d}v\rangle _{L^{2}(\mathbb{R}^{3}%
,{c}^{-1})}=\langle u,L_{1,d}v\rangle _{L^{2}(\mathbb{R}^{3})}
\right.
\nonumber\\
&  \left. 
=\langle u_{\+},-\Delta_{\Omega_{\+}}^{\max}v_{\+}\rangle_{L^{2}(\Omega_{\+})}+\sum_{\ell=1}^{n}
\langle u_{\ell},-d_{\ell}\,\Delta_{\Omega_{\ell}}^{\max}v_{\ell}\rangle_{L^{2}(\Omega_{\ell})}\right.
\nonumber\\
&  \left.  =\langle -\Delta_{\Omega_{\+}}^{\max}u_{\+},v_{\+}\rangle_{L^{2}(\Omega_{\+})}-\sum_{\ell=1}^{n}\big(  \langle \gamma_{0,\ell}u,\gamma_{1,\ell
}^{\mathrm{ex}}v\rangle _{\frac12,-\frac12}-\langle \gamma_{1,\ell
}^{\mathrm{ex}}u,\gamma_{0,\ell}v\rangle_{-\frac12,\frac12} \big)
\right. \nonumber\\
&  \left.  +\sum_{\ell=1}^{n}\langle -d_{\ell}\Delta_{\Omega_{\ell}}^{\max}u_{\ell},v_{\ell}\rangle_{L^{2}(\Omega_{\ell})}+\sum_{\ell=1}^{n}d_{\ell}\big(  \langle \gamma_{0,\ell}u,\gamma_{1,\ell
}^{\mathrm{in}}v\rangle _{\frac12,-\frac12}-\langle \gamma_{1,\ell
}^{\mathrm{in}}u,\gamma_{0,\ell}v\rangle_{-\frac12,\frac12} \big)
\right. \label{int_part}
\\
&  \left.  =
\langle -\Delta_{\Omega_{\+}}^{\max}u_{\+},v_{\+}\rangle_{L^{2}(\Omega_{\+})}-\sum_{\ell=1}^{n}\big(  \langle \gamma_{0,\ell}u,\gamma_{1,\ell
}^{\mathrm{ex}}v\rangle _{\frac12,-\frac12}-\langle \gamma_{1,\ell
}^{\mathrm{ex}}u,\gamma_{0,\ell}v\rangle_{-\frac12,\frac12} \big)
\right. \nonumber\\
&  \left.  +\sum_{\ell=1}^{n}\langle -d_{\ell}\Delta_{\Omega_{\ell}}^{\max}u_{\ell},v_{\ell}\rangle_{L^{2}(\Omega_{\ell})}+\sum_{\ell=1}^{n}\big(  \langle \gamma_{0,\ell}u,\gamma_{1,\ell
}^{\mathrm{ex}}v\rangle _{\frac12,-\frac12}-\langle \gamma_{1,\ell
}^{\mathrm{ex}}u,\gamma_{0,\ell}v\rangle_{-\frac12,\frac12} \big)
  \right. \nonumber
  \\
&  \left.  =
\langle -\Delta_{\Omega_{\+}}^{\max}u_{\+},v_{\+}\rangle_{L^{2}(\Omega_{\+})}+
\sum_{\ell=1}^{n}\langle -d_{\ell}\Delta_{\Omega_{\ell}}^{\max}u_{\ell},v_{\ell}\rangle_{L^{2}(\Omega_{\ell})}
=\langle
u,L_{1,d}v\rangle _{L^{2}(\mathbb{R}^{3})}\right. \nonumber\\
&  \left.  =\langle L_{c,d}u,v\rangle _{L^{2}(\mathbb{R}^{3}%
,{c}^{-1})}\,.\right.  \nonumber
\end{align}
Hence, $L_{c,d}$ is a symmetric operator in $L^{2}(\mathbb{R}^{3},{c}^{-1})$.
Moreover, the quadratic form associated to $L_{c,d}$ is%
\begin{equation}
F_{c,d}(  u)  =\Vert \nabla u_{\+}\Vert^{2} _{L^{2}(\Omega_{\+})}+\sum_{\ell=1}^{n}
d_{\ell}\Vert \nabla u_{\ell}\Vert^{2}  _{L^{2}(\Omega_{\ell})}. \label{EM_form}%
\end{equation}
By (\ref{d_sign_ass}), $F_{c,d}$ is not semibounded and the self-adjointness
of $L_{c,d}$ (as an operator in $L^{2}(\mathbb{R}^{3},{c}^{-1})$) cannot be
derived using (standard) quadratic forms arguments. Here, we show the
self-adjointness of $L_{c,d}$ by providing its resolvent (see Theorem \ref{Theorem_Krein_EM}).

When $u\in\mathcal{C}_{0}^{\infty}(\mathbb{R}^{3})$ and $v\in\mathcal{D}_{d}$,
the same computation as in (\ref{int_part}) yields (here $\langle\cdot,\cdot\rangle$ denotes the $\mathcal{C}_{0}^{\infty}(\mathbb{R}^{3}))\,$-$\,\mathcal{C}_{0}^{\infty}(\mathbb{R}^{3})'$ duality)%
\[
\left\langle u,L_{c,d}v\right\rangle=\left\langle -{\Delta
}u,v\right\rangle +\sum_{\ell=1}^{n}
\langle (1-c_{\ell}d_{\ell})  
\Delta_{\Omega_{\ell}}^{\max} u_{\ell},v_{\ell}\rangle -
\sum_{\ell=1}^{n}\langle \gamma_{0,\ell}u,[  \gamma_{1,\ell}]  v\rangle
_{\frac12,-\frac12}\,.
\]
Since
\[
\gamma_{1,\ell}^{\mathrm{ex}}u=\gamma_{1,\ell}u+\frac{1}{2}\,[\gamma_{1,\ell
}]u\,,\qquad\gamma_{1,\ell}^{\mathrm{in}}u=\gamma_{1,\ell}u-\frac{1}%
{2}\,[\gamma_{1,\ell}]u\,,
\]
the boundary conditions in \eqref{L_d_dom} are equivalent to
\begin{equation}
\lbrack\gamma_{0,\ell}]u=0\,,\qquad\left[  \gamma_{1,\ell}\right]
u=-2\,\frac{1-d_{\ell}}{1+d_{\ell}}\,\gamma_{1,\ell}u\,,\qquad\ell
=1,\dots,n\,. \label{transmission_bc}%
\end{equation}
Using (\ref{transmission_bc}) and duality, one obtains 
\[
\langle \gamma_{0,\ell}u,[  \gamma_{1,\ell}]  v\rangle
_{\frac12,-\frac12}=
2\,\frac{1-d_{\ell}}{1+d_{\ell}}\left\langle u,\gamma_{0,\ell}^{\ast}%
\gamma_{1,\ell}\psi\right\rangle _{\mathbb{R}^{3}}\,.
\]
This leads to the distributional identity%

\begin{equation}
L_{c,d}=-\Delta_{\mathbb{R}^{3}}+(1-cd)1_{\Omega}^{\ast}\Delta_{\Omega
}^{\text{\textrm{max}}}1_{\Omega}-2\,\frac{1-d}{1+d}\,\gamma_{0}^{\ast
}\,\gamma_{1}\,, \label{L_cd_weak}%
\end{equation}
where $\Delta_{\mathbb{R}^{3}}$ is the distributional Laplacian on
$\mathbb{R}^{3}$. Hence, in such a weak sense, $L_{c,d}$ is an additive
perturbation of the Laplacian with a relatively $\Delta$-bounded term plus a
distributional term supported on $\Gamma$ (since each $\gamma_{0,\ell}^{\ast
}\in\mathscr{B}(  H^{-1/2}(  \Gamma_{\ell})  ,H^{-1}
(\mathbb{R}^{3}))  $ has values in distributions supported on
$\Gamma_{\ell}$).

\subsection{\label{Sec_Notation}Notation}{\,}
\par\noindent$\bullet$ 
${\mathbb{C}%
}_{\pm}:=\left\{  z\in{\mathbb{C}}:\pm\operatorname{Im}z)>0\right\}  $.
\par\noindent$\bullet$ 
$D_{r}:=\left\{  z\in\mathbb{C}:\left\vert
z\right\vert <r\right\}  $,  
$B_{r}=\left\{
x\in\mathbb{R}^{3}:\left\vert x\right\vert <r\right\}  $. 
\par\noindent$\bullet$ 
$a\lesssim b$ means $a\leq c\,b$ for some constant $c$.
\par\noindent$\bullet$ 
Given a map $0<\varepsilon\mapsto u(\varepsilon)\in
X$, $X$ a Banach space, $u(\varepsilon)=O(\varepsilon^{\lambda})$ means $\Vert
u(\varepsilon)\Vert_{X}\lesssim\varepsilon^{\lambda}$. 
\par\noindent$\bullet$ 
$R_{\kappa}\equiv(-\Delta-\kappa^{2})^{-1}$ denotes the free
resolvent with kernel $\frac{e^{i\kappa|x-y|}}{4\pi|x-y|}$. 
\par\noindent$\bullet$
$S\!L_{\kappa}:=R_{\kappa}\gamma_{0}^{\ast}$ and
$D\!L_{\kappa}:=R_{\kappa}\gamma_{1}^{\ast}$ denote the single- and
double-layer operators for the global boundary $\Gamma$. 
\par\noindent$\bullet$ 
$S_{\kappa}:=\gamma_{0}S\!L_{\kappa}$, $K_{\kappa}:=\gamma
_{1}S\!L_{\kappa}$, $D_{\kappa}:=\gamma_{1}D\!L_{\kappa}$, $N\!P_{\kappa
}:=\gamma_{0}D\!L_{\kappa}\equiv K_{-\bar{\kappa}}^{\ast}$;  $S\!L_{\ell,\kappa}$, $D\!L_{\ell,\kappa}$, $S_{\ell,\kappa}$,
$D_{\ell,\kappa}$, $N\!P_{\ell,\kappa}$, $K_{\ell,\kappa}$ denote the same
kind of operators on $\Gamma_{\ell}$. 
\par\noindent$\bullet$ 
$N_{\kappa}:=1_{\Omega}R_{\kappa}1_{\Omega}^{\ast}$; in
particular, $N_{0}:=1_{\Omega}R_{0}1_{\Omega}^{\ast}$ is the Newton potential
operator in $L^{2}(\Omega)$;  $N_{\ell,\kappa}$
denote the same kind of operators on $\Omega_{\ell}$. 
\par\noindent$\bullet$ 
$1_{\Omega
}:L^{2}(\mathbb{R}^{3})\rightarrow L^{2}(\Omega)$ is the restriction map to
$\Omega$; its adjoint $1_{\Omega}^{\ast}:L^{2}(\Omega)\rightarrow
L^{2}(\mathbb{R}^{3})$ represents the extension by zero.
\par\noindent$\bullet$ 
$H^{s}(\mathcal{O})$ are the Sobolev spaces on the open set
$\mathcal{O}\subseteq\mathbb{R}^{3}$; $H^{s}(\Gamma)$ are the Sobolev spaces
on the boundary $\Gamma$.
\par\noindent$\bullet$ 
$\langle\cdot,\cdot
\rangle_{-s,s}$ denotes the dual pairing, induced from the $L^{2}(\Gamma
)$-scalar product, anti-linear with respect to the first variable, between
$H^{-s}(\Gamma)$ and $H^{s}(\Gamma)$.
\par\noindent$\bullet$ 
$\Delta
_{\mathcal{O}}$ is the distributional Laplacian on $\mathcal{O}\subseteq
\mathbb{R}^{3}$.
\par\noindent$\bullet$ 
$H_{\Delta}^{s}(\mathcal{O}%
):=\left\{  u\in H^{s}(\mathcal{O}):\Delta_{\mathcal{O}}u\in L^{2}%
(\mathcal{O})\right\}  $; $L_{\Delta}^{2}(\mathcal{O})\equiv H_{\Delta}%
^{0}(\mathcal{O})$.
\par\noindent$\bullet$ 
$H_{\alpha}^{s}(\RE^{3})  $, $L_{\alpha}^{2}(\RE^{3})  \equiv H_{\alpha}^{0}(\RE^{3})$, denotes the weighted $H^{s}$-spaces with weight
$\left\langle x\right\rangle ^{\alpha}:=(1+|x|^{2})^{\alpha/2}$;  $L_{\alpha}^{2}(\RE^{3})  \equiv H_{\alpha}^{0}(\RE^{3})$.%
\par\noindent$\bullet$ 
$\Delta_{\mathcal{O}}^{\mathrm{max}}:=\Delta_{\mathcal{O}}|L_{\Delta
}^{2}(\mathcal{O})$ is the maximal Laplacian on $L^{2}(\mathcal{O}%
)$.
\par\noindent$\bullet$
$\mathscr{B}(X,Y)$ is the Banach space of bounded linear operators on $X$ to
$Y$; $\mathscr{B}(X)\equiv\mathscr{B}(X,X)$; the corresponding operator  is denoted by $\Vert\cdot\Vert_{X,Y}$ and when no confusion arises we
use the abbreviated notation $\Vert \cdot\Vert $.
\par\noindent$\bullet$ 
${\mathfrak{S}}_{\infty}(X,Y)\subset\mathscr{B}(X,Y)$ denotes
the set of compact operators.
\par\noindent$\bullet$ 
${\varrho}(A)$ and
$\sigma(A)={\mathbb{C}}\backslash{\varrho}(A)$ denote the resolvent set and
the spectrum of the operator $A$. For a selfadjoint $A$, $\sigma_{p}(A)$, $\sigma_{ac}(A)$, $\sigma_{sc}(A)$, $\sigma_{ess}(A)$,
$\sigma_{disc}(A)$, denote the point,  absolutely continuous,
singular continuous, essential and discrete spectra.

\subsection{Previous works on related models}

Operators of the type $L_{c,d}$, $c=1$, $d<0$, were considered in a confining
domain $U\supset\Omega$, $\Omega$ made of a single connected component, and
Dirichlet boundary conditions at the boundary of $U$. The strategy to achieve
self-adjointness in the not-critical case, i.e. when $d\neq-1$, was settled in
\cite{BBDauRa CRASP 99}. Critical values of the transmission parameter where
later considered for particular geometries in \cite{BheKre JAM 14} and
\cite[Chp. 8]{Schm Thesis 14}. In \cite{CaPaPo JAMA 19} the problem of
self-adjointness was reformulated in terms of self-adjoint extensions of a
symmetric differential operator. Using the boundary triplet method, this was
reduced to the study of the essential self-adjointness of a pseudodifferential
operator on $\Gamma$. The authors recover the known results for a not-critical
contrast, while present new results in the case of a critical contrast,
showing how the domain of self-adjointness depends on the dimension and on the
geometry. A more general setting, involving sign-changing transmission
conditions in a non piece-wise homogeneous model with confinement, was
considered in \cite{BenPanka}.

The operators (\ref{L_cd_def})-(\ref{L_d_dom}) are used for the propagation of
electric and magnetic transverse modes in materials with sign-changing
parameters. Such configurations have relevant applications, including
plasmonic resonances and metamaterial designs. The lack of coercivity of
$L_{c,d}$ entails a well-posedness issue for the quasi-modes problems. This
was first addressed in \cite{BoBe JCAM 10}, \cite{BoBe ESIAM 12} using the
T-coercivity approach. A limiting absorption principle and the well-posedness
in the case of not constant contrast were obtained in \cite{Ngu JMPA 2016}
using complementing boundary conditions.

The resonances of $L_{c,d}$ are usually defined as the poles of the
meromorphic extension of a cutoff resolvent%
\begin{equation}
\kappa\mapsto\psi\big(  L_{c,d}-\kappa^{2}\big)  ^{-1}\psi\,,\qquad
\psi\in\mathcal{C}_{0}^{\infty}(  \mathbb{R}^{3})  \,,
\label{resolvent_cd_cutted_smooth}%
\end{equation}
to the second Riemann sheet, which in our $3D$ setting corresponds to
$\mathbb{C}_{-}$. It has been noticed that these poles coincide with those
$\kappa$ for which the problem%
\begin{equation}
\left\{
\begin{array}
[c]{l}%
\left(  -\Delta-\kappa^{2}\right)  u(x)=0\,,\qquad\qquad x\in\Omega_{\+}\,,\\
\left(  -cd\,\Delta-\kappa^{2}\right)  u(x)\,,\qquad\qquad\quad x\in
\Omega\,,\\
\lbrack\gamma_{0}]u=0\,,\qquad\left[  \gamma_{1}\right]  u=-2\,\frac{1-d}%
{1+d}\,\gamma_{1}u\,,
\end{array}
\right.  \label{scattering_res_eq}%
\end{equation}
has $\kappa$-outgoing solutions, namely: there exist $r_{\circ}>0$ and a compactly
supported distribution $g$ such that%
\begin{equation}
u(  x)  =\big(  (  L_{c,d}-\kappa^{2})  ^{-1}g\big)
(  x)  \,,\qquad\text{as }\left\vert x\right\vert >r_{\circ}\,.
\label{outgoing_cond}%
\end{equation}
This setting was adopted in \cite{PoVo CMP 99}, for $c,d>0$, and in \cite{CaMo
IMA 23}, in the case of sign-changing coefficients, to study the asymptotic
behaviour of the resonances with large real part.

The stationary-scattering problem%
\[
\left\{
\begin{array}
[c]{l}%
\left(  \nabla\!\!\cdot\!\left(  \frac{1}{\mu^{\mathrm{ex}}}\chi
_{\Omega_{\mathrm{ex}}}+\frac{1}{\mu^{\mathrm{in}}}%
\chi_{\Omega}\right)  \nabla+\omega^{2}\right)  u=0\,,\qquad\text{in
}\mathbb{R}^{3}\backslash\Gamma\,,\\
\left(  \Delta+\omega^{2}\right)  u_{\circ}=0\,,\qquad\text{in }\mathbb{R}%
^{3}\backslash\Gamma\,,\\
u^{\mathrm{sc}}=u-u_{\circ}\ \text{satisfies the Sommerfeld radiation condition
}\\
\lbrack\gamma_{0}]u=0\,,\qquad\frac{1}{\mu^{\mathrm{ex}}}\gamma_{1}%
^{\mathrm{ex}}u=\frac{1}{\mu^{\mathrm{in}}}\gamma_{1}^{\mathrm{in}}u\,,
\end{array}
\right.
\]
was considered in \cite{Amm et al ARMA 2017} in the quasi-static limit 
$\omega\rightarrow0_{+}$. An enhancement of the scattered field
$u^{\mathrm{sc}}$ is there observed depending on the material's parameters
$\mu^{\mathrm{in/ex}}$, namely when%
\[
\lambda:=\frac{1}{2}\frac{\mu^{\mathrm{ex}}+\mu^{\mathrm{in}}}{\mu
^{\mathrm{ex}}-\mu^{\mathrm{in}}}%
\]
is close to some eigenvalue of the Neumann-Poincar\'{e} operator $N\!P_{0}$ on
the interface $\Gamma$. In particular, when $\left\vert \lambda-\lambda
_{j}\right\vert $ is small for a simple $\lambda_{j}\in\sigma_{p}\left(
N\!P_{0}\right)  $, the far-field 
behaves at low-frequencies as (see \cite[Theorem 2.1]{Amm et al ARMA 2017})
\[
\frac{1}{\lambda-\lambda_{j}+{O}\left(  \omega^{2}\right)  }%
\]
and a blow-up is reached at $\omega=0$ and $\lambda=\lambda_{j}$. Such
regimes, where the scattered field concentrates at the interface, are referred
to as surface plasmons. A more refined analysis in convex domains has shown
that the surface modes concentrate at the regions of maximal curvature of the
boundary, \cite{AmmLiu 2}.

Let $v$ and $\rho$ refer to positive sets of coefficients $\left\{  v_{\ell
}\right\}  $, $\left\{  \rho_{\ell}\right\}  $. A study through a
Kre\u{\i}n-like resolvent formula of acoustic operators of the kind (be aware
that here our definition of $A_{v,\rho}$ differs by a minus sign from the one
in \cite{MaPo Acoustic 26})%
\begin{equation}
A_{v,\rho}:=-\left(  \chi_{\Omega_{\mathrm{ex}}}+v^{2}\rho
\right)  \nabla\!\!\cdot\!\left(  \chi_{\Omega_{\mathrm{ex}}}+\rho^{-1}\right)  \nabla\,,\qquad
\dom(  A_{v,\rho})  =\mathcal{D}_{1/\rho}\,,\ \rho
>0,\ v>0\,, \label{A_acou}%
\end{equation}
was developed in \cite{MaPo Acoustic 26} by adapting the abstract framework in
\cite{MaPo JST 23}. In particular, it was shown that the resolvent formula%
\begin{equation}
\left(  A_{v,\rho}-\kappa^{2}\right)  ^{-1}=R_{\kappa}+%
\begin{bmatrix}
R_{\kappa}1_{\Omega}^{\ast} & S\!L_{\kappa}%
\end{bmatrix}
\left(  Q_{\kappa}^{\mathrm{acoustic}}\right)  ^{-1}%
\begin{bmatrix}
({v}^{2}-1)1_{\Omega}^{\ast}\Delta_{\Omega}^{\mathrm{max}}1_{\Omega}R_{\kappa
}\\
2\,\frac{\rho-1}{\rho+1}\gamma_{1}R_{\kappa}%
\end{bmatrix}
\label{Krein_acou}%
\end{equation}
holds, where the block operator matrix
\[
Q_{\kappa}^{\mathrm{acoustic}}=%
\begin{bmatrix}
v^{2}+\left(  v^{2}-1\right)  \kappa^{2}N_{\kappa} & \left(  v^{2}-1\right)
\kappa^{2}1_{\Omega}S\!L_{\kappa}\\
-2\,\frac{\rho-1}{\rho+1}\,\gamma_{1}^{\mathrm{in}}N_{\kappa} & 1-2\,\frac
{\rho-1}{\rho+1}\,K_{\kappa}%
\end{bmatrix}
\]
turns out to have a bounded inverse for any $\kappa\in\mathbb{C}_{+}$, see
\cite[Theorem 4.6]{MaPo Acoustic 26}. Here $R_{\kappa}$ is the resolvent of
the free Laplacian in $\mathbb{R}^{3}$, $N_{\kappa}$ its compression to
$L^{2}(  \Omega)  $, $S\!L_{\kappa}=R_{\kappa}\gamma_{0}^{\ast}$
the single-layer potential at $\Gamma$ and $K_{\kappa}=\gamma_{1}R_{\kappa
}\gamma_{0}^{\ast}$ is the $L^{2}(\Gamma)$-adjoint of the Neumann-Poincar\'{e}
operator. The existence of $\kappa\mapsto\left(  Q_{\kappa}^{\mathrm{acoustic}%
}\right)  ^{-1}$ was achieved using Schur's complement and meromorphic
Fredholm theory, after proving the existence of the inverse operators
\[
\left(  1-2\,\frac{\rho-1}{\rho+1}\,K_{0}\right)  ^{-1}\in\mathscr{B}%
(H^{-1/2}(\Gamma))\,.
\]
The latter is consequence of%
\[
2\ \frac{\rho-1}{\rho+1}\in\left(  -2,2\right)  \,,
\]
and $\sigma(K_{0})=\sigma(N\!P_{0})\subset\left[  -1/2,1/2\right)  $
(a known spectral property of the Neumann-Poincar\'{e} operator), see
\cite[Lemmata 4.5 and 7.1]{MaPo Acoustic 26}.

\subsection{Aims and results}

Due to (\ref{d_sign_ass}), $L_{c,d}$ cannot be rephrased as an acoustic
operator of the type considered above. However, the formal analogy of
(\ref{L_cd_def}) and (\ref{A_acou}) suggests to define the $Q$-function of the
couple $\left(  L_{c,d},-\Delta\right)  $ as%
\begin{equation}
Q_{\kappa}(c,d)=%
\begin{bmatrix}
cd+({cd}-1)\kappa^{2}N_{\kappa} & (cd-1)\kappa^{2}1_{\Omega}S\!L_{\kappa}\\
-\mu(  {d})  \,\gamma_{1}^{\mathrm{in}}N_{\kappa} & 1-\mu\left(
{d}\right)  \,K_{\kappa}%
\end{bmatrix}
\label{Q_cd}%
\end{equation}
where $\mu(  {d})  $ is the piecewise constant function on $\Gamma$
defined by%
\begin{equation}
\mu(  {d})  :=\sum_{\ell=1}^{n}
\mu_{\ell}(  d_{\ell})  \,\chi_{\Gamma_{\ell}}\,,\qquad\mu_{\ell
}(  d_{\ell})  :=2\,\frac{1-d_{\ell}}{1+d_{\ell}}\,.
\label{mu_d_def}%
\end{equation}
We notice that $d_{\ell}\in\mathbb{R}\backslash\left\{  -1,0\right\}  $
implies: $\mu_{\ell}(  d_{\ell})  \in\mathbb{R}\backslash\left\{
\pm2\right\}  $. In the present setting $d$ is not sign-definite and the
argument used in \cite[Sec. 4]{MaPo Acoustic 26} to prove the existence of
$Q_{\kappa}^{-1}(c,d)$ can only be used partially. In particular, if $d_{\ell
}\in\left(  -\infty,0\right)  \backslash\left\{  -1\right\}  $ one has
\[
\mu_{\ell}(  d_{\ell})  \in\left(  -\infty,-2\right)  \cup\left(
2,+\infty\right)  \,.
\]
Since $\sigma(K_{\ell,0})\subset\left[  -1/2,1/2\right)  $, there are specific
values of $d_{\ell}<0$ at which $1\in\sigma(\mu_{\ell}(  d_{\ell})
K_{\ell,0})$. In such a case $(1-\frac{1-d}{1+d}\,K_{0})$ has a not-trivial
kernel and, as a consequence, $Q_{\kappa}(c,d)$ is not invertible at
$\kappa=0$.

The main assumption in our work is 
\begin{description}
\item[\emph{H0}] Either $1\notin\sigma\left(  \mu(  {d})
\,K_{0}\right)  $ or $1\in\sigma\left(  \mu(  {d})  \,K_{0}\right)
$ is a simple eigenvalue.
\end{description}

In what follows, such an assumption is tacitly assumed (it will be explicitly recalled
in Sections 2 and 3). 

A part of our analysis is carried on in the particular case where all the $d_{\ell}$'s have
the same value (obviously, this is always the case whenever $\Omega$ reduces to a single connected component). In such a
case, \emph{H0} is replaced by 

\begin{description}
\item[\emph{H0.1}] Let $d_{\ell}=d_{\circ}$ for any index $\ell$ and set $\mu(
d_{\circ}):=2\,\frac{1-d_{\circ}}{1+d_{\circ}}$. Then $\mu^{-1}(
d_{\circ})  \notin\sigma(K_{0})$ or $\mu^{-1}(  d_{\circ})  \in
\sigma(K_{0})$ is a simple eigenvalue.
\end{description}
It is worth to recall that the not-zero eigenvalues of the
Neumann-Poincar\'{e} operators $K_{0}$ are generically simple in the
sense of the Baire category, see \cite{AnKanMiPu 23}. Hence, H0.1 holds for generic domains, out of those with some symmetry (as, e.g., the
balls). 

In this work we prove the existence of $\kappa\mapsto Q_{\kappa}%
^{-1}(c,d)$ as a meromorphic function in $\mathbb{C}$ with values in
$\mathscr{B}(  L^{2}(  \Omega)  \oplus H^{-1/2}(\Gamma))  $. In particular, we show in Theorem
\ref{Theorem_Q_full} that $\kappa\mapsto Q_{\kappa}^{-1}\left(  c,d\right)  $
has a pole in $\kappa=0$ whenever $1\in\sigma\left(  \mu(  {d})
\,K_{0}\right)  $. Building on results from \cite{MaPo Acoustic 26}, in
Sections 4, we prove the self-adjointness of $L_{c,d}$ providing a resolvent
formula of the kind (\ref{Krein_acou}) and a corresponding  limiting
absorption principle (LAP in the following), see Theorem \ref{Theorem_Krein_EM}%
, Lemma \ref{Lemma_LAP} and Remark \ref{rem-LAP}. The spectral analysis is
developed in Section 5, Theorem \ref{theorem_spectrum}, and specialized in
Theorem \ref{Theorem_zero_eigenvalue} where we discuss the existence of the
zero eigenvalue in the setting \emph{H0.1}. The outcomes of this analysis are
resumed in the next Theorem.

\begin{theorem}
\label{Th1}$L_{c,d}$ is self-adjoint in $L^{2}(\mathbb{R}^{3},c^{-1})$ and the
resolvent formula%
\be
R_{\kappa}(c,d):=\left(  L_{c,d}-\kappa^{2}\right)  ^{-1}=R_{\kappa}+%
\begin{bmatrix}
R_{\kappa}1_{\Omega}^{\ast} & S\!L_{\kappa}%
\end{bmatrix}
Q_{\kappa}^{-1}(c,d)%
\begin{bmatrix}
(cd-1)\Delta_{\Omega}^{\mathrm{max}}1_{\Omega}R_{\kappa}\\
\mu(d)\,\gamma_{1}R_{\kappa}%
\end{bmatrix}\label{Krein_L_cd}
\ee
holds for any $\kappa\in\mathbb{C}_{+}$ such that $\kappa^{2}\in{\varrho
}(L_{c,d})$. The limit%
\[
R_{\kappa}(c,d):=\lim_{\delta\rightarrow0^{+}}R_{\kappa+i\delta}%
(c,d)\,,\qquad\kappa\in\mathbb{R}\backslash\left\{  0\right\}  \,,
\]
exists in $\mathscr{B}(H_{\alpha}^{0}(\mathbb{R}^{3}),H_{-\alpha}%
^{1}(\mathbb{R}^{3}))$ for $\alpha>3/2$. The resolvent formula (\ref{Krein_EM}%
) holds in the limit and, for any $u\in H_{\alpha}^{0}(\mathbb{R}^{3})$ one
has%
\[
\left(  \left(  -\Delta_{\Omega_{\mathrm{ex}}}\right)
\oplus\left(  -cd\Delta_{\Omega}\right)  -\kappa^{2}\right)  R_{\kappa
}(c,d)u=u\,,\qquad\kappa\in\mathbb{R}\backslash\{0\}\,.
\]
Furthermore,
\[
\sigma_{p}(L_{c,d})\cap(0,+\infty)=\varnothing\,,\quad\sigma_{ess}%
(L_{c,d})=[0,+\infty)\,,\quad\sigma_{disc}(L_{c,d})\text{ accumulates at
}-\infty\,,
\]%
\[
\sigma_{sc}(L_{c,d})=\varnothing\,,\quad\sigma_{ac}(L_{c,d})=[0,+\infty)\,.
\]
Finally, if $d_{\ell}=d_{\circ}\in(-\infty,0)\backslash\{  -1\}
$ for any index $\ell$ and $\mu^{-1}(d_{\circ})\in\sigma(K_{0})$, then
$0\in\sigma_{p}(L_{c,d})$.
\end{theorem}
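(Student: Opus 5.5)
The plan follows the route the introduction announces. First establish invertibility of the $Q$-function, then define the right-hand side of \eqref{Krein_L_cd} and show it is the resolvent of $L_{c,d}$. After that, pass to real $\kappa$ by a limiting absorption argument, and read off the spectrum from the poles of $Q_\kappa^{-1}(c,d)$.

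\textbf{Step 1: invertibility of $Q_\kappa(c,d)$.} I would treat the block matrix \eqref{Q_cd} by a Schur complement with respect to the $(1,1)$ entry. Since $cd\neq 1$ and $c,d\neq0$, the operator $cd+(cd-1)\kappa^2N_\kappa$ is a compact perturbation of the invertible multiplication by $cd$, and it is analytic in $\kappa$. The Schur complement is then a compact perturbation of $1-\mu(d)K_\kappa$.

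Under \emph{H0}, $1-\mu(d)K_0$ is Fredholm of index zero. Compactness of $K_0-K_0^{\mathrm{sym}}$-type corrections on $C^2$ boundaries, and the fact that $K_\kappa-K_0$ is compact, then make $Q_\kappa(c,d)$ Fredholm of index zero for all $\kappa\in\mathbb{C}$.

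Invertibility at one point comes next. For $\kappa\in\mathbb{C}_+$ with $\kappa^2\notin\mathbb{R}$, a kernel element of $Q_\kappa$ would produce a nonzero $u\in\mathcal{D}_d$ with $L_{c,d}u=\kappa^2u$. This is impossible by the symmetry of $L_{c,d}$ in $L^2(\mathbb{R}^3,c^{-1})$ established in \eqref{int_part}. The analytic Fredholm theorem then gives a meromorphic inverse in $\mathbb{C}$.

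\textbf{Step 2: resolvent formula and self-adjointness.} Following \cite{MaPo Acoustic 26}, I would verify directly that the right-hand side of \eqref{Krein_L_cd} maps $L^2$ into $\mathcal{D}_d$ and inverts $L_{c,d}-\kappa^2$. The check uses the distributional identity \eqref{L_cd_weak} together with the jump relations for $S\!L_\kappa$ and $R_\kappa 1_\Omega^*$. Having a bounded two-sided inverse at $\kappa^2$ and $\bar\kappa^2$ for a symmetric operator yields self-adjointness.

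\textbf{Step 3: limiting absorption principle.} $R_\kappa$, $S\!L_\kappa$ and $N_\kappa$ extend continuously from $\mathbb{C}_+$ to $\mathbb{R}$ in weighted spaces, $\mathscr{B}(H^0_\alpha,H^1_{-\alpha})$ with $\alpha>3/2$. So the formula extends wherever $Q_\kappa(c,d)$ is invertible for real $\kappa\neq0$. The main obstacle is excluding real poles $\kappa\neq0$. A kernel element there gives an outgoing solution of \eqref{scattering_res_eq}. A Rellich-type argument would show that the exterior field vanishes: take the imaginary part of the Green identity on a large ball, noting that $d$ and $c$ are real so the interior boundary terms are real. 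Unique continuation across $\Gamma$, using $[\gamma_0]u=0$ and $\gamma_1^{\mathrm{ex}}u=d\gamma_1^{\mathrm{in}}u$, then forces $u=0$ in $\Omega$ as well. The relevant interior problem is Cauchy data zero for $-cd\Delta-\kappa^2$.

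This argument gives $\sigma_p\cap(0,\infty)=\varnothing$, the LAP, the stated equation for $R_\kappa(c,d)u$, and hence $\sigma_{ac}=[0,\infty)$ and $\sigma_{sc}=\varnothing$ via Stone's formula.

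\textbf{Step 4: remaining spectral statements.} For the essential spectrum, the resolvent difference with $R_\kappa$ is compact after cutoff considerations, giving $\sigma_{ess}=[0,\infty)$. Negative spectrum corresponds to $\kappa\in i\mathbb{R}_+$ poles, which are discrete. Their accumulation at $-\infty$ would follow from Weyl sequences supported inside a component $\Omega_\ell$ with $d_\ell<0$, where $-c_\ell d_\ell\Delta$ is negative. One tests against Dirichlet eigenfunctions of small balls, or uses the unboundedness below of $F_{c,d}$ on an infinite-dimensional subspace.

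Finally, suppose $\mu^{-1}(d_\circ)\in\sigma(K_0)$, and take $\phi$ with $\mu K_0\phi=\phi$. I would set $u=S\!L_0\phi$. This $u$ is harmonic off $\Gamma$ and continuous across it, with $\gamma_1^{\mathrm{in/ex}}u=(K_0\mp\tfrac12)\phi$ up to sign convention. The eigen-relation is exactly the transmission condition \eqref{transmission_bc}. The remaining point is $u\in L^2$, which requires $\int_\Gamma\phi=0$. This holds because $\phi$ is orthogonal to the eigenfunction of $K_0^*=N\!P_0$ at $1/2$, and the eigenvalue $\mu^{-1}\neq1/2$. With that, $u=O(|x|^{-2})$ and $0\in\sigma_p(L_{c,d})$.
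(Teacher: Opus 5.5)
Your proposal is correct. It departs from the paper's proof in three substantive places.

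First, to get invertibility of $Q_\kappa(c,d)$ at some point, you turn a kernel element $(u,\varphi)$ directly into an eigenfunction $\psi=R_\kappa 1_\Omega^\ast u+S\!L_\kappa\varphi$.
\begin{itemize}
\item The second row of $Q_\kappa(c,d)(u,\varphi)=0$ is literally $\varphi=\mu(d)\gamma_1\psi$, i.e.\ $[\gamma_1]\psi=-\mu(d)\gamma_1\psi$.
\item The first row is $cd\,u+(cd-1)\kappa^2\psi|_\Omega=0$, i.e.\ $(-cd\Delta-\kappa^2)\psi|_\Omega=0$.
\item If $\psi=0$, then $\varphi=-[\gamma_1]\psi=0$ and hence $u=0$.
\end{itemize}
The paper instead eliminates $\varphi$ with $(1-\mu(d)K_\kappa)^{-1}$. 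That requires Theorem \ref{Theorem_inverse}, the set $\Sigma_d$ and the representation \eqref{dom_id} of $\mathcal D_d$. Your version is shorter and shows that \emph{H0} plays no role in Theorem \ref{Th1}; it only matters for the behaviour of $Q_\kappa^{-1}(c,d)$ at $\kappa=0$. Fredholmness does not need \emph{H0} either: $Q_\kappa(c,d)$ is a compact perturbation of $\mathrm{diag}(cd,1)$ for every $\kappa$, since $K_\kappa$ is compact on a $\mathcal C^2$ boundary.

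Second, you check the Kre\u{\i}n formula by hand. The same row-by-row computation shows that the right-hand side maps $L^2$ into $\mathcal D_d$ and is a right inverse of $L_{c,d}-\kappa^2$. You then get self-adjointness from symmetry plus surjectivity at $\kappa^2$ and at $\overline{\kappa}^{\,2}=(-\overline{\kappa})^2$. The paper imports this step from the abstract Kre\u{\i}n-formula framework quoted in the proof of Theorem \ref{Theorem_Krein_EM}. Your route is self-contained; the paper's is shorter given the references.

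Third, you exclude real characteristic points $\kappa\neq0$ by a Rellich flux argument. The key point is that $\langle\gamma_0\psi,\gamma_1^{\mathrm{ex}}\psi\rangle_{\frac12,-\frac12}=\sum_\ell d_\ell\langle\gamma_{0,\ell}\psi,\gamma_{1,\ell}^{\mathrm{in}}\psi\rangle_{\frac12,-\frac12}$ is real because $c$ and $d$ are real, whatever the sign of $d$. The paper instead argues, in Remarks \ref{rem-pole} and \ref{rem-res}, that such a point would be a pole of the resolvent, hence a positive eigenvalue, which Theorem \ref{theorem_spectrum} excludes. Your argument is more direct. It also avoids having to interpret a pole of the weighted resolvent at a point of $\sigma_{ess}(L_{c,d})$.

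There are two small slips. Test functions for unboundedness from below must lie in $\mathcal D_d$, e.g.\ $u\in\mathcal C_c^\infty(\Omega_\ell)$ with $d_\ell<0$. Zero-extended Dirichlet eigenfunctions of balls are not in $\mathrm{dom}(L_{c,d})$, and there is no semibounded form to fall back on.

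In the paper's normalization the constant function is the eigenfunction of $N\!P_0$ for the eigenvalue $-\tfrac12$, not $\tfrac12$. So the condition you actually need is $\mu^{-1}(d_\circ)\neq-\tfrac12$. This is automatic, because $\mu^{-1}(d)=\frac{1+d}{2(1-d)}$ never equals $-\tfrac12$. With this correction, your biorthogonality argument
\[
\mu^{-1}(d_\circ)\langle 1,\phi\rangle_{\frac12,-\frac12}=\langle 1,K_0\phi\rangle_{\frac12,-\frac12}=\langle N\!P_0 1,\phi\rangle_{\frac12,-\frac12}=-\tfrac12\langle 1,\phi\rangle_{\frac12,-\frac12}
\]
gives $\int_\Gamma\phi=0$. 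It is slightly simpler than the paper's use of the $\hat H^{-1/2}(\Gamma)$ inner product.
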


A framework for the resonances is presented in Section 6: in Theorem
\ref{teo-res} we show that the set of resonance $\mathrm{Res}(L_{c,d})$
is given by the 
characteristic points of the $Q$-function, i.e, 
$$
\mathrm{Res}(L_{c,d})=\{\kappa^{2}\in\mathbb C:\ker(  Q_{\kappa}(
c,d))  \neq\{  0\}\,,\ \kappa\in\mathbb C_{-}\}\,. 
$$
In Sections 7 we
introduce the assumption \emph{H1} and \emph{H2} (replacing \emph{H0}). Here,
$\Omega\ $is simply connected (hence, we avoid the use of the index $\ell$)
and the material parameters $c(\varepsilon)$,
$d(\varepsilon)$ are analytic functions of
$\varepsilon$, such that
\begin{align*}
&  \left.  c(\varepsilon)=c_{0}+c_{1}\varepsilon
+{O}(\varepsilon^{2})\,,\quad c_{0}>0\,,\ c_{1}\not =0\,,\right.  \\
& \\
&  \left.  d(\varepsilon)=\frac{\lambda_{\circ}-\rho(\varepsilon)-1/2}{\lambda
_{\circ}-\rho(\varepsilon)+1/2}\,,\qquad\rho(\varepsilon
)=\rho_{0}\varepsilon+{O}(\varepsilon^{2})\,,\quad\rho_{0}%
\neq0\,,\right.  \\
& \\
&  \left.  \lambda_{\circ}\in\sigma(K_{0})\backslash\left\{  0,-1/2\right\}
\text{ is a simple eigenvalue.}\right.
\end{align*}
This entails
\[
\tilde{d}(  \varepsilon)  \equiv\mu(  d(\varepsilon))
=2\,\frac{1-d(\varepsilon)}{1+d(\varepsilon)}\,,\qquad\tilde{d}^{-1}(
\varepsilon)  =\lambda_{\circ}-\rho(  \varepsilon)  \,.
\]
Hence, the small parameter $\varepsilon$ controls the distance of $\tilde
{d}^{-1}(\varepsilon)$ from $\sigma(K_{0})$. In this setting, we look for  the
near-zero resonances of the operators
\[
\left.  L(\varepsilon)\equiv-(  \chi_{\Omega_{\+}}+c(\varepsilon))  \nabla\!\cdot
\!(  \chi_{\Omega_{\mathrm{ex}}}+d(\varepsilon
))  \nabla\,.\right.
\]
Since $\tilde{d}^{-1}(0)=\lambda_{\circ}\in\sigma(K_{0})$ is
not-degenerate, the origin is an eigenvalue $L(0)$. Using an analytic
implicit function argument and a Lyapunov-Schmidt reduction, we show that,
under the analytic perturbation of the coefficients, such zero eigenvalue
splits into a couple of poles $\kappa_{\pm}(  \varepsilon)  $ of
the $Q$-function.
In particular, for $\varepsilon\in\left(  0,\varepsilon_{\circ}\right)  $ and
depending on the sign of $\rho_{0}/\lambda_{\circ}$, the points $\kappa_{\pm
}(  \varepsilon)  $ correspond either to a couple of resonances 
 or an eigenvalue and
a resonance. The result of this analysis, presented in Theorem
\ref{Teo-res} and Remark \ref{Rem-res} is next resumed.

\begin{theorem}
\label{Th2}Let $\varphi_{\circ}\in H^{-1/2}(\Gamma)$ denotes the normalized
eigenvector for $\lambda_{\circ}\in\sigma(K_{0})\backslash\left\{  0,-1/2\right\}
$. There exists $\varepsilon_{0}>0$ and a unique couple of functions
$(0,\varepsilon_{0})\ni\varepsilon\mapsto\kappa_{\pm}\left(  \varepsilon
\right)  $, analytic with respect to $\sqrt{\varepsilon}$, such that:\medskip
\newline$i)$ if $\rho_{0}/\lambda_{\circ}<0$, then
\[
\kappa_{\pm}(  \varepsilon)  =\pm\sqrt{2}\,\left\vert \frac
{{\rho_{0}}}{\lambda_{\circ}}\right\vert ^{1/2}
\Vert\varphi_{\circ}\Vert_{\hat{H}^{-1/2}(\Gamma)}
\sqrt{\varepsilon}\,\left(  1+{O}(\varepsilon
)\right)  +{O}(  \varepsilon^{2})
\]
and $\kappa_{\pm}^{2}(  \varepsilon)  $ are both resonances of
$L(\varepsilon)$;
\medskip\newline$ii)$ if $\rho_{0}/\lambda_{\circ}>0$, then
\[
\kappa_{\pm}(\varepsilon)=\pm i\sqrt{2}\,\left(  \frac{{\rho_{0}}}{\lambda
_{\circ}}\right)  ^{1/2}\Vert\varphi_{\circ}\Vert_{\hat{H}^{-1/2}(\Gamma)}%
\sqrt{\varepsilon}\,\left(  1+{O}(\varepsilon)\right)  +{O}%
\left(  \varepsilon^{2}\right)
\]
and $\kappa_{+}^{2}(  \varepsilon)  $ is a negative eigenvalue,
while $\kappa_{-}^{2}(  \varepsilon)  $ is a resonances of
$L(\varepsilon)$.
\end{theorem}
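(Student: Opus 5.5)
The plan is to reduce the resonance problem to a scalar analytic equation in $(\kappa,\varepsilon)$ near $(0,0)$. By the characterization of resonances stated before the theorem, $\kappa^2(\varepsilon)$ is a resonance (if $\operatorname{Im}\kappa<0$) or an eigenvalue (if $\operatorname{Im}\kappa>0$) exactly when $\ker Q_\kappa(c(\varepsilon),d(\varepsilon))\neq\{0\}$. So we need to locate the characteristic points of $Q_\kappa(\varepsilon):=Q_\kappa(c(\varepsilon),d(\varepsilon))$ near $\kappa=0$. Since $\kappa\mapsto N_\kappa, S\!L_\kappa, K_\kappa$ are entire in $\kappa$ (kernel $e^{i\kappa|x-y|}/4\pi|x-y|$), $Q_\kappa(\varepsilon)$ is jointly analytic in $(\kappa,\varepsilon)$.

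The first block is $cd+(cd-1)\kappa^2N_\kappa$, which is invertible for small $\kappa$ because $cd\neq0$. Taking the Schur complement with respect to it, $\ker Q_\kappa(\varepsilon)\ne\{0\}$ is equivalent to $\ker M_\kappa(\varepsilon)\neq\{0\}$, where
\[
M_\kappa(\varepsilon)=1-\tilde d(\varepsilon)K_\kappa-\tilde d(\varepsilon)\,\gamma_1^{\rm in}N_\kappa\,(cd+(cd-1)\kappa^2N_\kappa)^{-1}(cd-1)\kappa^2 1_\Omega S\!L_\kappa .
\]
Its correction term is $O(\kappa^2)$. Expanding, $K_\kappa=K_0+i\kappa K^{(1)}+\kappa^2K^{(2)}+\dots$, where $K^{(1)}$ has kernel $\partial_{\nu_x}|x-y|/4\pi=0$ after integration. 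More precisely, $\gamma_1 R_\kappa$ expanded at first order gives $\frac{i\kappa}{4\pi}\langle 1,\cdot\rangle\,\partial_\nu(\text{const})=0$, so the first nonzero correction is of order $\kappa^2$. I will nonetheless keep track of the order-$\kappa$ and order-$\kappa^3$ terms, since these produce the imaginary parts. Dividing by $\tilde d$, we get $\tilde d^{-1}M_\kappa=\lambda_\circ-\rho(\varepsilon)-K_0+\kappa^2 B+O(\kappa^3)$, where $B$ collects the $\kappa^2$ terms from $K_\kappa$ and from the Newton-potential coupling.

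Next comes the Lyapunov–Schmidt reduction. Let $P$ be the Riesz projection of $K_0$ onto the simple eigenvalue $\lambda_\circ$, with range spanned by $\varphi_\circ$ and dual eigenvector $\psi_\circ$ of $N\!P_0=K_0^*$. On the complementary range, $\lambda_\circ-K_0$ is invertible. The implicit function theorem solves the complementary equation analytically in $(\kappa,\varepsilon)$ and leaves a scalar analytic function
\[
f(\kappa,\varepsilon)=-\rho(\varepsilon)+\kappa^2\,\langle\psi_\circ,B\varphi_\circ\rangle+O(\kappa^3)+O(\kappa^2\varepsilon).
\]
The key computation is $\langle\psi_\circ,B\varphi_\circ\rangle$. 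Using the Plemelj symmetrization $S_0K_0=N\!P_0S_0$, one can take $\psi_\circ=S_0\varphi_\circ$ up to normalization. The $\kappa^2$-coefficient of $K_\kappa$ is $\gamma_1$ applied to the kernel $-|x-y|/8\pi$, and the Newton coupling term involves $(cd-1)/cd$. Green's identities on $\Omega$ and $\Omega_{\rm ex}$, together with $\gamma_1^{\rm in}S\!L_0\varphi_\circ=(\lambda_\circ-\tfrac12)\varphi_\circ$ and $\gamma_1^{\rm ex}S\!L_0\varphi_\circ=(\lambda_\circ+\tfrac12)\varphi_\circ$, should collapse it to
\[
\langle\psi_\circ,B\varphi_\circ\rangle=\frac{\lambda_\circ}{2\Vert\varphi_\circ\Vert^2_{\hat H^{-1/2}}}
\]
up to sign. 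Here $\Vert\varphi\Vert^2_{\hat H^{-1/2}}=\langle\varphi,-S_0\varphi\rangle$ is the positive norm. This normalization is the source of the constant in the statement, and I expect this identity to be the main obstacle: the $c$-dependence must cancel at leading order, and the signs must be tracked carefully.

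Granting this, $f(\kappa,\varepsilon)=0$ reads $\kappa^2=2\frac{\rho_0}{\lambda_\circ}\Vert\varphi_\circ\Vert^2\varepsilon\,(1+O(\varepsilon))+O(\kappa^3)$. Setting $\kappa=\sqrt{\varepsilon}\,t$ and dividing by $\varepsilon$ gives an analytic equation in $(t,\sqrt\varepsilon)$ with two simple roots $t_\pm=\pm\bigl(-2\rho_0/\lambda_\circ\bigr)^{1/2}\Vert\varphi_\circ\Vert$ at $\sqrt\varepsilon=0$, up to the overall sign convention fixed by the computation above. The analytic implicit function theorem then gives unique branches $\kappa_\pm(\varepsilon)$, analytic in $\sqrt\varepsilon$, with the stated leading terms. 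The $O(\varepsilon^2)$ remainder in $\kappa$ comes from the $O(\kappa^3)$ term, which is purely imaginary at leading order because the cubic term of $e^{i\kappa r}$ carries a factor $i$.

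Finally, classify the branches. In case (i) the leading terms are real. The imaginary correction comes from the $i\kappa^3$ term and is negative for both roots, since the outgoing condition forces it, which is consistent with Theorem \ref{Th1}: there are no embedded positive eigenvalues and a LAP holds. Both branches therefore lie in $\mathbb C_-$ and are resonances. In case (ii) the roots are $\pm i\,|\cdot|$. The root $\kappa_+\in\mathbb C_+$ gives $\kappa_+^2<0$ in $\varrho$ of the free part, and by the resolvent formula (\ref{Krein_L_cd}) a pole there is a negative eigenvalue of the self-adjoint operator $L(\varepsilon)$. This forces $\kappa_+$ to be exactly imaginary, so the correction is real. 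The root $\kappa_-\in\mathbb C_-$ is a resonance by Theorem \ref{teo-res}.
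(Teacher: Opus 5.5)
\textbf{Verdict: there is a genuine gap.} Your architecture is essentially the paper's: Schur complement onto $\Gamma$, Lyapunov--Schmidt reduction at the simple eigenvalue, blow-up $\kappa=\sqrt{\varepsilon}\,t$, analytic implicit function theorem, and classification through Theorem~\ref{teo-res} and Remark~\ref{rem-res}. The paper reduces directly on $L^{2}(\Omega)\oplus\hat H^{-1/2}(\Gamma)$ with the projector $P_\circ$, which is equivalent.

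\textbf{The missing step.} The only quantitative input, $b:=\langle\psi_\circ,B\varphi_\circ\rangle$, is never computed. You postulate it ``up to sign'' in the form needed to reproduce the statement. Since $\kappa_\pm^2\simeq\rho_0\varepsilon/b$, the constant and the alternative (i)/(ii) are both decided by $b$, so neither has been proved.

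\textbf{The postulated value is not what the computation gives.} First correct three sign slips, all of which matter here:
\begin{itemize}
\item With the paper's convention $[\gamma_1]S\!L\varphi=-\varphi$, one has $\gamma_1^{\mathrm{in}}S\!L_0\varphi_\circ=(\lambda_\circ+\tfrac12)\varphi_\circ$ and $\gamma_1^{\mathrm{ex}}S\!L_0\varphi_\circ=(\lambda_\circ-\tfrac12)\varphi_\circ$; yours are swapped.
\item The norm is $\Vert\varphi\Vert^2_{\hat H^{-1/2}}=\langle S_0\varphi,\varphi\rangle$, with no minus sign in 3D.
\item The Schur correction enters with a plus sign.
\end{itemize}
Then $B=-K^{(2)}+\tfrac{g_0-1}{g_0}\,\gamma_1^{\mathrm{in}}N_0 1_\Omega S\!L_0$, where $K^{(2)}$ is the $\kappa^2$-coefficient of $K_\kappa$ and $g_0=c_0d(0)$. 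Green's identities in $\Omega$ and $\Omega_{\mathrm{ex}}$, using $-\Delta S\!L^{(2)}\varphi_\circ=S\!L_0\varphi_\circ$, give
\[
b=-\frac{\lambda_\circ+\frac12}{\Vert\varphi_\circ\Vert^2_{\hat H^{-1/2}(\Gamma)}}\Big(\Vert S\!L_0\varphi_\circ\Vert^2_{L^2(\Omega_{\mathrm{ex}})}+c_0^{-1}\Vert S\!L_0\varphi_\circ\Vert^2_{L^2(\Omega)}\Big)<0\,.
\]
The same value follows from first-order perturbation of the form \eqref{EM_form}, and it agrees with a separation-of-variables computation on a ball. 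Two consequences:
\begin{itemize}
\item The $c$-dependence does not cancel: part of the $\frac{g_0-1}{g_0}$-term cancels against $K^{(2)}$, but a $c_0^{-1}$-term survives.
\item $b$ has a fixed sign, so the real/imaginary alternative is governed by the sign of $\rho_0$, not of $\rho_0/\lambda_\circ$. For $\lambda_\circ<0$ the two cases of the statement are interchanged.
\end{itemize}
So this step cannot be filled in to yield the statement as written. The paper does not obtain its constant from such an identity either. Since $\tilde d(\varepsilon)=(\lambda_\circ-\rho(\varepsilon))^{-1}$, the $\varepsilon$-term of $\tilde d$ is $+\rho_0\varepsilon/\lambda_\circ^2$, so $c_{1,0}$ should be $-\frac{\rho_0}{\lambda_\circ}\Vert\varphi_\circ\Vert^2$. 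Moreover, the proof of Theorem~\ref{Teo-res} solves $c_{1,0}+\tfrac12z^2=0$, although the rescaled function satisfies $g(0,z)=c_{1,0}+\tfrac12c_{0,2}z^2$.

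\textbf{The remainder.} Your account is inconsistent. A nonzero $\kappa^3$-coefficient $\beta$ in your scalar equation shifts the roots by $-\tfrac{\beta}{2b}\kappa_\pm^2=O(\varepsilon)$, not $O(\varepsilon^2)$. The paper reaches $O(\varepsilon^2)$ only by claiming that the odd coefficients vanish (the cubic one via \eqref{zero_mean}) and then using $g(-t,z)=g(t,-z)$. That claim omits the cubic part $K^{(3)}$ of $K_\kappa$, whose kernel is $-\tfrac{i}{12\pi}\nu_x\cdot(x-y)$. One finds
\[
\langle S_0\varphi_\circ,K^{(3)}\varphi_\circ\rangle=\tfrac{i}{12\pi}\big(\lambda_\circ+\tfrac12\big)\Big|\int_\Gamma y\,\varphi_\circ(y)\,d\sigma(y)\Big|^2\,,
\]
which the zero-mean property does not kill. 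So for modes with a dipole moment, $\operatorname{Im}\kappa_\pm$ is of order $\varepsilon$.

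\textbf{Classification in case (i).} You neither need nor have established the sign of that imaginary part; ``the outgoing condition forces it'' is not an argument. The paper's route suffices:
\begin{itemize}
\item By Remark~\ref{rem-res} there are no characteristic points in $\mathbb R\setminus\{0\}$.
\item Characteristic points in $\mathbb C_+$ are eigenvalues of the self-adjoint $L(\varepsilon)$, hence lie on $i\mathbb R_+$.
\end{itemize}
Two roots with nonzero leading real part must therefore lie in $\mathbb C_-$. Your treatment of case (ii) is fine.
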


The Kre\u{\i}n-type resolvent formula (\ref{Krein_L_cd}) for $
L_{c,d}  $ and the relations in Theorems \ref{Th1} linking the
spectral properties of $L_{c,d}$ to the kernel of its $Q$-function are new and
extends our previous works \cite{MaPo JST 23}, \cite{MaPo Acoustic 26} to the
case of sign-changing transmission conditions. The derivation of the LAP
through the Kre\u{\i}n's formula, in Lemma \ref{Lemma_LAP} (together with Remark
\ref{rem-LAP}), simplifies in our setting the analysis provided in \cite{Ngu
JMPA 2016}. The spectral nature of $\kappa=0$ for sign-changing transmission
problems with piecewise constant coefficients was discussed in \cite{BheKre
JAM 14}, \cite{Schm Thesis 14} and \cite{CaPaPo JAMA 19} in the case of
a Dirichlet confinement. In this regard, our work extend such an analysis to the
unbounded case. The operator $L(\varepsilon)$ is of the type considered in
\cite{Amm et al ARMA 2017} and in similar works on plasmonic resonances. The
result provided in Theorem \ref{Th2} confirms the expectation that, in the
asymptotic regime of $\tilde{d}^{-1}(\varepsilon)\sim\lambda_{\circ}\in
\sigma(K_{0})$, the scattering solutions of the problem%
\[
\left(  L(\varepsilon)-\omega^{2}\right)  u=0\,,
\]
are enhanced at low frequency, since Theorem \ref{Th2} predicts that
$\kappa(\varepsilon)\sim0$ is a pole of the scattering amplitude (not
introduced here). We point out that the analyticity and spectral nature of
$\kappa(\varepsilon)$, were not
discussed in \cite{Amm et al ARMA 2017} and we are not aware that such an analysis has been carried out so far. 

\section{\label{Sec_Transmission}The transmission operator}

Let us recall that $\mathbb{C}\ni\kappa\mapsto N\!P_{\kappa}$ is an analytic
family of compact operators on $H^{1/2}( \Gamma) $, see \cite[Lemma
5.3]{MaPoSi JMPA 22}. Hence, for each $\kappa\in\mathbb{C}$, $\sigma\left(
N\!P_{\kappa}\right)  \backslash\left\{  0\right\}  $ is made of a at most
countably-infinite set of eigenvalues of finite multiplicity possibly
accumulating at $\kappa=0$. In
particular, for the the Neumann-Poincar\'{e} operator one has $\sigma(
N\!P_{0}) \subset\left[  -1/2,1/2\right)  $ and $\kappa=-1/2$ is the
eigenvalue corresponding to the constant eigenfunction, see e.g. in \cite[Sec.
5]{AnKanMiPu RevMath 21}.

The operators $K_{\kappa}:=\gamma_{1}R_{\kappa}\gamma_{0}^{\ast}$ are related
to the $L^{2}(\Gamma)$-adjoint of $N\!P_{\kappa}$ by $K_{\kappa}%
=N\!P_{-\bar{\kappa}}^{\ast}$ (we use $R_{\kappa}^{\ast}=R_{-\bar{\kappa}}$).
The mapping properties of $N\!P_{\kappa}$ imply that $\mathbb{C}\ni
\kappa\mapsto K_{\kappa}$ is an analytic family of compact operators in
$H^{-1/2}(\Gamma)$. The Plemelj symmetrization principle%
\begin{equation}
N\!P_{0}S_{0}=S_{0}K_{0} \label{symm_id}%
\end{equation}
implies $\sigma(N\!P_{0})=\sigma(K_{0})$. Let us introduce the scalar
product
\begin{equation}
\lbrack\phi,\varphi]_{-\frac{1}{2}}:=\langle S_{0}\phi,\varphi\rangle
_{\frac{1}{2},-\frac{1}{2}}\,,\qquad\phi,\varphi\in H^{-1/2}(\Gamma)\,,
\label{mod_prod}%
\end{equation}
where
\[
S_{0}:H^{-1/2}(\Gamma)\rightarrow H^{1/2}(\Gamma)\,,\qquad S_{0}:=\gamma
_{0}S\!L_{0}\,,
\]
is an isomorphism and is self-adjoint with respect to the $H^{1/2}$-$H^{-1/2}$
duality. We denote by $\hat{H}^{-1/2}(  \Gamma)  $ the space
$H^{-1/2}(  \Gamma)  $ equipped with the scalar product
$[\cdot,\cdot]_{-\frac{1}{2}}$. From \eqref{symm_id} there follows
\begin{align*}
&  \left.  \left[  K_{0}\phi,\varphi\right]  _{-\frac{1}{2}}=\left\langle
S_{0}K_{0}\phi,\varphi\right\rangle _{\frac{1}{2},-\frac{1}{2}}=\left\langle
\phi,N\!P_{0}S_{0}\varphi\right\rangle _{-\frac{1}{2},\frac{1}{2}}=\left\langle
\phi,S_{0}K_{0}\varphi\right\rangle _{-\frac{1}{2},\frac{1}{2}}\right. \\
&  \left.  =\left\langle S_{0}\phi,K_{0}\varphi\right\rangle _{\frac{1}%
{2},-\frac{1}{2}}=\left[  \phi,K_{0}\varphi\right]  _{-\frac{1}{2}}\right.
\end{align*}
and thus $K_{0}$ is self-adjoint in $\hat{H}^{-1/2}(  \Gamma)  $.
Let us further recall that $N\!P_{0}1=-1/2$; hence, (\ref{symm_id}) implies%
\[
K_{0}\left(  S_{0}^{-1}1\right)  =-\frac{1}{2}\left(  S_{0}^{-1}1\right)  \,.
\]
In particular, by the selfadjointness of $K_{0}$ in $\hat{H}^{-1/2}\left(
\Gamma\right)  $, for any eigencouple $\left(  \lambda_{\circ},\varphi_{\circ}\right)
$ of $K_{0}$ with $\lambda_{\circ}\neq-1/2$ one has,%
\begin{equation}
\left\langle 1,\varphi_{\circ}\right\rangle _{\frac{1}{2},-\frac{1}{2}%
}=\left\langle S_{0}\left(  S_{0}^{-1}1\right)  ,\varphi_{\circ}\right\rangle
_{\frac{1}{2},-\frac{1}{2}}=\left[  \left(  S_{0}^{-1}1\right)  ,\varphi
_{\circ}\right]  _{-\frac{1}{2}}=0\,. \label{zero_mean}%
\end{equation}

We next focus on the operator pencil%
\begin{equation}
\kappa\mapsto(  1-\mu(  d)  \,K_{\kappa})  \,,
\label{Op_pencil}%
\end{equation}
where $\mu(  d)  $ is defined by (\ref{mu_d_def}). Let us recall
the assumption \emph{H0}:

\begin{description}
\item[\emph{H0}] Assume that either $1\notin\sigma\left(  \mu(
{d})   K_{0}\right)  $ or $1\in\sigma\left(  \mu(  d)
 K_{0}\right)  $ is a simple eigenvalue.
\end{description}
\begin{remark}
\label{Remark_inverse}If%
\[
d_{\ell}=d_{\circ}\in\mathbb{R}\backslash\left\{  -1,0\right\}  \quad\forall
\ \ell=1,...,n\,,
\]
in particular, if $\Omega$ is made of a single connected component, then assumption \emph{H0} rephrases as 
$$
\text{$\mu^{-1}(  d_{\circ})
\notin\sigma(  K_{0})  $ or $\mu^{-1}(  d_{\circ})
\in\sigma(  K_{0})  $ is a simple eignvalue.}
$$
\end{remark}

\begin{theorem}
\label{Theorem_inverse}If \emph{H0} holds, then
\begin{equation}
\mathbb{C}\ni\kappa\mapsto(  1-\mu(  d)  \,K_{\kappa})
^{-1} \label{Op_pencil_inv}%
\end{equation}
is a finitely meromorphic $\mathscr{B}(H^{-1/2}(\Gamma))$-valued map. In
particular:\medskip\newline$i)$ If $1\notin\sigma(  \mu({d})   K_{0})  $, then $\kappa\mapsto(  1-\mu(d)  \,K_{\kappa})  ^{-1}$ is analytic at $\kappa=0$.\medskip
\newline$ii)$ If $1\in\sigma(  \mu(  d)   K_{0})  $,
then $\kappa\mapsto(  1-\mu(  d)  \,K_{\kappa})  ^{-1}$
has a pole at $\kappa=0$.
\end{theorem}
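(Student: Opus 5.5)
The plan is to use the analytic Fredholm theorem. By the facts recalled at the beginning of this section, $\kappa\mapsto K_\kappa$ is an entire family of compact operators on $H^{-1/2}(\Gamma)$. Multiplication by the piecewise constant function $\mu(d)$ is bounded on $H^{-1/2}(\Gamma)$, since $\Gamma$ is a disjoint union of the $\Gamma_\ell$ and $\mu(d)$ is constant on each of them. Hence $\kappa\mapsto \mu(d)K_\kappa$ is also an entire family of compact operators.

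The analytic Fredholm alternative then says that either $1-\mu(d)K_\kappa$ is non-invertible for every $\kappa\in\mathbb{C}$, or $(1-\mu(d)K_\kappa)^{-1}$ is finitely meromorphic on $\mathbb{C}$. In the latter case the poles form a discrete set, and the principal parts have finite-rank coefficients. So the whole argument reduces to finding a single $\kappa_0$ at which $1-\mu(d)K_{\kappa_0}$ is invertible.

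\textbf{Main obstacle.} When $1\notin\sigma(\mu(d)K_0)$ we can simply take $\kappa_0=0$, because $\mu(d)K_0$ is compact and Fredholm's alternative gives invertibility. Case ii), however, needs another point, since $0$ is then exactly the bad point. I would proceed as follows.

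\emph{First approach.} Pick $\kappa=i\tau$ with $\tau>0$ large and use that $\|K_{i\tau}\|_{\mathscr{B}(H^{-1/2}(\Gamma))}\to 0$ as $\tau\to+\infty$. I expect this decay to follow from the exponential decay of the kernel $e^{-\tau|x-y|}/(4\pi|x-y|)$ when it is combined with the standard mapping estimates for layer potentials. If that norm decay proves too delicate in the $H^{-1/2}$ topology, I would try the second approach.

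\emph{Second approach.} Argue by perturbation at $\kappa=0$ using H0.
- The kernel of $1-\mu(d)K_0$ is one-dimensional. By Fredholm theory, $1-\mu(d)K_\kappa$ is Fredholm of index zero for every $\kappa$.
- Perform a Lyapunov–Schmidt (Grushin) reduction near $\kappa=0$ onto the kernel vector $\varphi$ and a cokernel vector $\psi$. This reduces invertibility to the non-vanishing, for small $\kappa\neq0$, of the scalar analytic function $f(\kappa)=\langle\psi,(1-\mu(d)K_\kappa)\varphi\rangle+O(\text{higher order})$.
- Since $f(0)=0$, it suffices to show that $f$ is not identically zero. For this, expand $K_\kappa=K_0+i\kappa K^{(1)}+O(\kappa^2)$, where the first-order kernel $\frac{\partial}{\partial\kappa}\frac{e^{i\kappa|x-y|}}{4\pi|x-y|}\Big|_{\kappa=0}=\frac{i}{4\pi}$ is constant. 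Differentiating in the normal direction, the $\kappa$-linear part of $K_\kappa$ vanishes, so the first nontrivial term is of order $\kappa^2$ and involves $\gamma_1$ applied to $|x-y|$.
- Show that this coefficient paired with $\psi,\varphi$ is nonzero, or, if not, go to higher order. This is the delicate point, and it is why I would prefer the large-$\tau$ argument.

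\textbf{Conclusion of the two cases.} Once meromorphy on $\mathbb{C}$ is established:
- Claim i) is immediate, because invertibility at $0$ together with the stability of bounded invertibility under small analytic perturbations gives analyticity in a neighbourhood of $0$.
- For claim ii), note that if the inverse were analytic at $0$ it would exist at $\kappa=0$. This contradicts $\ker(1-\mu(d)K_0)\neq\{0\}$, so $0$ must be a pole.

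Assumption H0 (simplicity) is not needed for meromorphy itself. It only serves to describe the pole, namely a rank-one leading coefficient, which can be recorded via the Grushin reduction above.
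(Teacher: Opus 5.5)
Your reduction is the same as the paper's. The meromorphic Fredholm alternative reduces everything to finding one $\kappa$ at which $1-\mu(d)K_\kappa$ is invertible. Case $i)$ is then immediate, and the pole in case $ii)$ follows from $\ker(1-\mu(d)K_0)\neq\{0\}$. The gap is that in case $ii)$ you never actually produce that invertibility point, and that is the whole content of the theorem.

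\textbf{First approach.} The Neumann series at $\kappa=i\tau$ is a good idea. If completed, it would be shorter than the paper's argument and would need neither \emph{H0} nor $d_\ell<0$. But $\|K_{i\tau}\|_{\mathscr B(H^{-1/2}(\Gamma))}\to0$ is only expected, not proved, and kernel decay does not give it directly in $H^{-1/2}$. Kernel decay only gives an $L^2$ bound. The kernel of $K_{i\tau}$ is $-\frac{(x-y)\cdot\nu_x}{4\pi|x-y|^3}(1+\tau|x-y|)e^{-\tau|x-y|}$, and $|(x-y)\cdot\nu_x|\lesssim|x-y|^2$ on a $\mathcal C^2$ surface. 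So Schur's test gives $\|K_{i\tau}\|_{\mathscr B(L^2(\Gamma))}=O(\tau^{-1})$. To pass to $H^{-1/2}(\Gamma)$ you need one more ingredient. One option is to show that $K_{i\tau}$ maps $H^{-1/2}(\Gamma)$ into $L^2(\Gamma)$. Then any $H^{-1/2}$-kernel vector of $1-\mu(d)K_{i\tau}$ lies in $L^2$ and is killed by the $L^2$ bound for large $\tau$, and the Fredholm alternative gives invertibility. Another option is uniform $H^{\pm1}$ bounds plus interpolation. As written, this step is missing.

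\textbf{Second approach.} This cannot be completed as sketched. Take a common value $d_\ell\equiv d$ and a real kernel vector $\varphi$ with $[\varphi,\varphi]_{-\frac12}=1$. The Grushin scalar is $f(\kappa)=-\mu(d)\,\kappa^2\,[\varphi,K^{(2)}\varphi]_{-\frac12}+O(\kappa^3)$, where $K^{(2)}$ has kernel $-\partial_{\nu_x}|x-y|/(8\pi)$. Green's identities, using $\langle 1,\varphi\rangle_{\frac12,-\frac12}=0$, give $[\varphi,K^{(2)}\varphi]_{-\frac12}=\frac{1}{1-d}\big(\|S\!L_0\varphi\|^2_{L^2(\Omega_{\mathrm{ex}})}+d\,\|S\!L_0\varphi\|^2_{L^2(\Omega)}\big)$. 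For $d<0$ this is a difference of positive terms; it is the same indefinite form that appears in the paper's proof, evaluated at $\kappa=0$. Nothing in your plan shows it is nonzero. ``Going to higher order'' is not an argument unless you already know that some coefficient is nonzero, i.e.\ that $f\not\equiv0$, and that is exactly what has to be proved.

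\textbf{How the paper closes it.} The paper supplies this as a global argument by contradiction, assuming non-invertibility everywhere. Kato--Rellich, which is where \emph{H0} is used, gives an analytic kernel family $\varphi(\kappa)$. Then $\psi(\kappa)=S\!L_\kappa\varphi(\kappa)$ lies in $\mathcal D_d$ and solves $L_{1,d}\psi=\kappa^2(\chi_{\Omega_{\mathrm{ex}}}+d)\psi$. Symmetry of $L_{1,d}$ forces $\|\psi(\kappa)\|^2_{L^2(\Omega_{\mathrm{ex}})}+\sum_\ell d_\ell\|\psi(\kappa)\|^2_{L^2(\Omega_\ell)}=0$ whenever $\kappa^2\notin\mathbb R$. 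Letting $\kappa\to\kappa_\circ>0$ gives $\psi(\kappa_\circ)\in L^2(\mathbb R^3)$. Rellich's lemma, unique continuation and the vanishing Cauchy data on $\Gamma$ then yield $\psi(\kappa_\circ)=0$, contradicting $\varphi(\kappa_\circ)\neq0$.
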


\begin{proof}
Since $\kappa\mapsto(  1-\mu(  d)  \,K_{\kappa})  $ is
analytic and $\kappa\mapsto\mu(  d)  \,K_{\kappa}$ compact-operator
valued, the meromorphic Fredholm theory applies, see \cite[Theorem
XIII.13]{ReSi IV}. Hence, either the map (\ref{Op_pencil_inv}) is finitely
meromorphic in $\mathbb{C}$ or $(1-\mu(  d)  \,K_{\kappa})$ is
nowhere invertible. We distinguish the two cases: 1) $1\notin\sigma\left(
\mu(  d)   K_{0}\right)  $; 2) $1\in\sigma\left(  \mu\left(
d\right)   K_{0}\right)  $ is a simple eigenvalue.

1) If $1\notin\sigma\left(  \mu(  d)   K_{0}\right)  $, then
$\ker\left(  1-\mu(  d)   K_{0}\right)  =\varnothing$. Hence
$(  1-\mu(  d)  \,K_{\kappa})  ^{-1}\in\mathscr{B}%
(H^{-1/2}(\Gamma))$ exists by the Fredholm alternative and, by continuity of
$\kappa\mapsto(  1-\mu(  d)  \,K_{\kappa})  $, the
operators $(  1-\mu(  d)  \,K_{\kappa})  ^{-1}%
\in\mathscr{B}(H^{-1/2}(\Gamma))$ exist in an open neighbourhood of $\kappa=0$.
Henceforth (\ref{Op_pencil_inv}) is finitely meromorphic in $\mathbb{C}$.

2) Let $1\in\sigma\left(  \mu(  d)   K_{0}\right)  $ be simple and
assume that $(  1-\mu(  d)  \,K_{\kappa})  $ is not
invertible at any $\kappa\in\mathbb{C}$. We show that this statement leads to
a contradiction. By the Fredholm alternative, $\ker\left(  1-\mu\left(
d\right)  \,K_{\kappa}\right)  \neq\left\{  0\right\}  $, which is equivalent
to say that, for each $\kappa$, there exists a non-trivial solution
$\varphi(\kappa)\in H^{-1/2}(\Gamma)$ of the eigenvalue problem%
\[
\mu(  d)  \,K_{\kappa}\varphi(\kappa)=\varphi(  \kappa)
\,.
\]
We work in a disk ${D}_{\delta}$ with $\delta>0$ small and denote%
\[
{D}_{\delta}^{+}:=\left\{  \kappa\in\mathbb{C}_{+}:\left\vert \kappa
\right\vert <\delta\right\}  \,.
\]
Since $\kappa\mapsto\mu(  d)  \,K_{\kappa}$ is analytic and
$1\in\sigma\left(  \mu(  d)   K_{0}\right)  $ is not-degenerate,
the Kato-Rellich theorem applies (e.g. \cite[Theorem XII.8]{ReSi IV}).
Therefore, the eigencouple $(1,\varphi(  \kappa)  )$ is analytic in
$D_{\delta}$. Let us define $\psi(\kappa):=S\!L_{\kappa}\varphi(\kappa)$. By
the trace maps properties of the single-layer operator, one gets
\begin{equation}
\left[  \gamma_{0}\right]  \psi(\kappa)=0\,,\qquad\left[  \gamma_{1}\right]
\psi(\kappa)=-\varphi(  \kappa)  =-\mu(  d)
\,\gamma_{1}\psi(\kappa)\,. \label{transmission_bc_id}%
\end{equation}
For $\kappa\in{D}_{\delta}^{+}$, the well-known mapping properties (see e.g.
in \cite[Theorem 6.12 and successive remarks]{McLe}), the first jump condition
in (\ref{transmission_bc_id}) and the identity%
\begin{equation}
-\Delta S\!L_{\kappa}\varphi(\kappa)|\mathbb{R}^{3}\backslash\Gamma=\kappa^{2}S\!L_{\kappa}\varphi
(\kappa)|\mathbb{R}^{3}\backslash\Gamma\,,\label{SL_id}%
\end{equation}
imply $\psi(\kappa)\in H^{1}(\mathbb{R}^{3})\cap L_{\Delta}^{2}(\mathbb{R}%
^{3}\backslash\Gamma)$. Define $L_{1,d}$ as in (\ref{L_cd_def})-(\ref{L_d_dom})
with $c_{\ell}=1$ for all $\ell$ and $d=\left(  d_{1},...d_{\ell}\right)  $.
Then, (\ref{transmission_bc_id}) corresponds to the boundary conditions
(\ref{transmission_bc}). It follows that $\psi(\kappa)\in\mathcal{D}_{d}$ and,
using (\ref{L_cd_def}) and (\ref{SL_id}), one obtains%
\[
L_{1,d}\psi(\kappa)|\Omega_{\mathrm{ex}}=\kappa^{2}%
\psi(\kappa)|\Omega_{\mathrm{ex}}\,,\qquad L_{1,d}%
\psi(\kappa)|\Omega_{\ell}=\kappa^{2}d_{\ell}\psi(\kappa)|\Omega_{\ell}\,.
\]
Therefore, for any $\kappa\in{D}_{\delta}^{+}$, one gets the identities%
\begin{align*}
\left\langle \psi(\kappa),L_{1,d}\psi(  \kappa)  \right\rangle
_{L^{2}(\mathbb{R}^{3})}=  &  \left\langle \psi(\kappa),-{\Delta}\psi
(\kappa)\right\rangle _{L^{2}(\Omega_{\mathrm{ex}})}+%
{\textstyle\sum\nolimits_{\ell=1}^{n}}
\left\langle \psi(  \kappa)  ,-d_{\ell}\,{\Delta}\psi
(\kappa)\right\rangle _{L^{2}(\Omega_{\mathrm{ex}})  }\\
=  &  \kappa^{2}\left(  \left\Vert \psi(\kappa)\right\Vert _{L^{2}(\Omega_{\mathrm{ex}})  }^{2}+%
{\textstyle\sum\nolimits_{\ell=1}^{n}}
d_{\ell}\,\left\Vert \psi(\kappa)\right\Vert _{L^{2}(  \Omega_{\ell
})  }^{2}\right)
\end{align*}
and%
\begin{align*}
\left\langle L_{1,d}\psi(\kappa),\psi(  \kappa)  \right\rangle
_{L^{2}(\mathbb{R}^{3})}=  &  \left\langle -{\Delta}\psi(\kappa),\psi
(\kappa)\right\rangle _{L^{2}(  \Omega_{\mathrm{ex}})  }+%
{\textstyle\sum\nolimits_{\ell=1}^{n}}
\left\langle -d_{\ell}\,{\Delta}\psi(  \kappa)  ,\psi
(\kappa)\right\rangle _{L^{2}(  \Omega_{\mathrm{ex}})  }\\
=  &  \overline{\kappa^{2}}\left(  \left\Vert \psi(\kappa)\right\Vert
_{L^{2}(  \Omega_{\mathrm{ex}})  }^{2}+%
{\textstyle\sum\nolimits_{\ell=1}^{n}}
d_{\ell}\,\left\Vert \psi(\kappa)\right\Vert _{L^{2}(  \Omega_{\ell
})  }^{2}\right)  \,.
\end{align*}
Since $\varphi(\kappa)\neq0$, from the injectivity of $S\!L_{\kappa}$ there
follows $\psi(  \kappa)  \neq0$ for any $\kappa$. For $\kappa\in
{D}_{\delta}^{+}\backslash i\mathbb{R}_{+}$, one has $\kappa^{2}\neq
\overline{\kappa^{2}}$ and the symmetry of $L_{1,d}$ (as an operator in
$L^{2}(\mathbb{R}^{3})$) implies%
\begin{equation}
\left\Vert \psi(  \kappa)  \right\Vert _{L^{2}(
\Omega_{\mathrm{ex}})  }^{2}+%
{\textstyle\sum\nolimits_{\ell=1}^{n}}
d_{\ell}\,\left\Vert \psi(  \kappa)  \right\Vert _{L^{2}(\Omega
)}^{2}=0\,. \label{abv}%
\end{equation}
By (\ref{d_sign_ass}), there exists $d_{\ell_{\circ}}<0$ for some $\ell_{\circ}$.
Choosing ${D}_{\delta}\ni\kappa_{\circ}>0$ and taking the limit ${D}_{\delta}%
^{+}\backslash i\mathbb{R}_{+}\ni\kappa\rightarrow\kappa_{\circ}$ in (\ref{abv}),
one gets%
\[
\Vert \psi(\kappa_{\circ})\Vert _{L^{2}(\Omega_{\+})  }^{2}\leq\vert d_{\ell_{\circ}%
}\vert\,\Vert \psi(\kappa_{\circ})\Vert _{L^{2}(\Omega_{\tilde{\ell}})  }^{2}\,.
\]
Hence, $\psi(\kappa_{\circ})\in L^{2}(\mathbb{R}^{3})$. Let $r>0$ such that
$\Omega\subset B_{r}$; by (\ref{SL_id}), $\psi_{\circ}(\kappa_{\circ})$ solves the
Helmholtz equation outside $B_{r}$; the Rellich estimate implies $\left.
\psi(\kappa_{\circ})\right\vert B_{r}=0$ (see, e.g., \cite[Corollary 4.8]{Leis}).
Then, by the unique continuation property, $\left.  \psi\left(  \kappa
_{\circ}\right)  \right\vert \Omega_{\mathrm{ex}}=0$. Since
$\psi\left(  \kappa_{\circ}\right)  $ fulfills (\ref{transmission_bc_id}), both
the inner traces $\ $at $\Gamma$, $\gamma_{j}^{\mathrm{in}}\psi\left(
\kappa_{\circ}\right)  $, $j=0,1$, vanish. From (\ref{SL_id}) there follows%
\[
\left\{
\begin{array}
[c]{l}%
\left(  \Delta+\kappa^{2}\right)  \psi(\kappa_{\circ})|\Omega=0\,,\\
\gamma_{0}^{\mathrm{in}}\psi(\kappa_{\circ})=\gamma_{1}^{\mathrm{in}}\psi
(\kappa_{\circ})=0\,.
\end{array}
\right.
\]
This implies $\left.  \psi(\kappa_{\circ})\right\vert \Omega=0$. In conclusion,
$\psi(\kappa_{\circ})=S\!L_{\kappa}\varphi(\kappa_{\circ})=0$, which is in
contradiction with $\varphi(\kappa_{\circ})\neq0$.
\end{proof}

We denote by $\Sigma_{d}\subseteq\mathbb{C}$ the domain of analyticity of
$\kappa\mapsto(  1-\mu(  d)  \,K_{\kappa})  ^{-1}$,
i.e.,%
\begin{equation}
\Sigma_{d}\equiv\left\{  \kappa\in\mathbb{C}:\left(  1-\mu(  d)
\,K_{\kappa}\right)  ^{-1}\in\mathscr{B}(H^{-1/2}(\Gamma))\right\}  \,.
\label{sigma-d}%
\end{equation}

\begin{lemma}
If \emph{H0} holds and $\kappa\in\Sigma_{d}\cap\mathbb{C}_{+}$, then
\begin{equation}
\mathcal{D}_{d}=\left\{  u=u_{\circ}+S\!L_{\kappa}\left(  1-\mu(  d)
\,K_{\kappa}\right)  ^{-1}\mu(  d)  \,\gamma_{1}u_{\circ}\,,\qquad
u_{\circ}\in H^{2}(\mathbb{R}^{3})\right\}  \,. \label{dom_id}%
\end{equation}

\end{lemma}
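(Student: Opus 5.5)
We prove the two inclusions separately. Fix $\kappa\in\Sigma_d\cap\mathbb{C}_+$ and write
$$
\varphi:=(1-\mu(d)K_\kappa)^{-1}\mu(d)\,\gamma_1u_\circ\in H^{-1/2}(\Gamma),
$$
which is well defined since $\gamma_1u_\circ\in H^{1/2}(\Gamma)\subset H^{-1/2}(\Gamma)$ and $\kappa\in\Sigma_d$.

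\emph{Inclusion "$\supseteq$".} Given $u_\circ\in H^2(\mathbb{R}^3)$, set $u:=u_\circ+S\!L_\kappa\varphi$.

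First, $u\in H^1(\mathbb{R}^3)\cap L^2_\Delta(\mathbb{R}^3\backslash\Gamma)$. Indeed, $S\!L_\kappa\varphi\in H^1(\mathbb{R}^3)$ for $\kappa\in\mathbb{C}_+$ by the mapping properties recalled in the proof of Theorem \ref{Theorem_inverse}. Moreover, \eqref{SL_id} shows that $\Delta S\!L_\kappa\varphi$ lies in $L^2$ away from $\Gamma$.

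Next, the transmission conditions. The single-layer jump relations give $[\gamma_0]S\!L_\kappa\varphi=0$, $[\gamma_1]S\!L_\kappa\varphi=-\varphi$ and $\gamma_1S\!L_\kappa\varphi=K_\kappa\varphi$. Since $u_\circ\in H^2$ has no jumps, $[\gamma_0]u=0$ and $[\gamma_1]u=-\varphi$. From $(1-\mu(d)K_\kappa)\varphi=\mu(d)\gamma_1u_\circ$ we get
$$
\varphi=\mu(d)\big(\gamma_1u_\circ+K_\kappa\varphi\big)=\mu(d)\,\gamma_1u .
$$
Hence $[\gamma_1]u=-\mu(d)\gamma_1u$, which is exactly \eqref{transmission_bc}, equivalently the boundary condition in \eqref{L_d_dom}. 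So $u\in\mathcal{D}_d$.

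\emph{Inclusion "$\subseteq$".} Let $u\in\mathcal{D}_d$ and set $\varphi:=-[\gamma_1]u=\mu(d)\gamma_1u\in H^{-1/2}(\Gamma)$.

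We compute $(-\Delta-\kappa^2)u$ distributionally on $\mathbb{R}^3$. Since $[\gamma_0]u=0$, Green's formula gives
$$
(-\Delta-\kappa^2)u=f+\gamma_0^\ast\varphi,\qquad f:=(-\Delta-\kappa^2)u|_{\mathbb{R}^3\backslash\Gamma}\in L^2(\mathbb{R}^3).
$$
Now define $u_\circ:=u-S\!L_\kappa\varphi$. Then $(-\Delta-\kappa^2)u_\circ=f$ as tempered distributions, because $(-\Delta-\kappa^2)S\!L_\kappa\varphi=\gamma_0^\ast\varphi$. Both $u$ and $S\!L_\kappa\varphi$ lie in $H^1(\mathbb{R}^3)$ (for $S\!L_\kappa\varphi$ this uses $\operatorname{Im}\kappa>0$). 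Hence $u_\circ\in H^1(\mathbb{R}^3)$ solves the Helmholtz equation with $L^2$ right-hand side, and uniqueness in $H^1$ for $\kappa\in\mathbb{C}_+$ gives $u_\circ=R_\kappa f\in H^2(\mathbb{R}^3)$.

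It remains to identify $\varphi$. Taking $\gamma_1$ of $u=u_\circ+S\!L_\kappa\varphi$ gives $\gamma_1u=\gamma_1u_\circ+K_\kappa\varphi$. Multiplying by $\mu(d)$ yields $(1-\mu(d)K_\kappa)\varphi=\mu(d)\gamma_1u_\circ$. Since $\kappa\in\Sigma_d$, this equation can be inverted, so $\varphi=(1-\mu(d)K_\kappa)^{-1}\mu(d)\gamma_1u_\circ$, which is the representation \eqref{dom_id}.

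\emph{Main obstacle.} The delicate step is the distributional identity $(-\Delta-\kappa^2)u=f+\gamma_0^\ast\varphi$ for $u\in H^1\cap L^2_\Delta(\mathbb{R}^3\backslash\Gamma)$, where the Neumann traces exist only in $H^{-1/2}(\Gamma)$. This is handled by the generalized Green formula on each of $\Omega_\ell$ and $\Omega_{\mathrm{ex}}$, exactly as in the computation preceding \eqref{L_cd_weak}. The only other point to check carefully is that $R_\kappa$ maps $L^2$ into $H^2$ and that $H^1$-solutions of the free Helmholtz equation vanish when $\operatorname{Im}\kappa>0$; both follow from Fourier analysis.
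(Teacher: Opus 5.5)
Your proof is correct. The inclusion $\supseteq$ is the same check the paper makes: the jump relations for $S\!L_\kappa$ together with the boundary equation give $[\gamma_1]u=-\mu(d)\gamma_1u$.

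For $\subseteq$ your route differs. The paper quotes a representation lemma from the authors' earlier work. It says every $u\in L^2_\Delta(\mathbb{R}^3\backslash\Gamma)$ can be written as $u_\circ+S\!L_\kappa\varphi-D\!L_\kappa\phi$, with $u_\circ\in H^2(\mathbb{R}^3)$, $\varphi\in H^{-3/2}(\Gamma)$ and $\phi\in H^{-1/2}(\Gamma)$. The paper then uses $[\gamma_0]u=0$ to remove the double layer, and the Neumann jump to identify $\varphi=\mu(d)\gamma_1u$.

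You prove the decomposition you need directly. Green's formula on each component gives $(-\Delta-\kappa^2)u=f+\gamma_0^*\varphi$ with $\varphi=-[\gamma_1]u\in H^{-1/2}(\Gamma)$. Then $u-S\!L_\kappa\varphi\in H^1(\mathbb{R}^3)$ solves a Helmholtz equation with $L^2$ data, and since $\kappa^2\notin[0,+\infty)$ it must equal $R_\kappa f\in H^2(\mathbb{R}^3)$.

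Your argument is self-contained and uses $u\in H^1(\mathbb{R}^3)$ from the start, so no double layer ever appears. The cited lemma is more general: it handles Dirichlet jumps and $H^{-3/2}$ Neumann data, which suits the paper's abstract framework but is more than this lemma needs.

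Your bookkeeping also has a small advantage. Taking the density $\varphi=\mu(d)\gamma_1u$ as the unknown gives $(1-\mu(d)K_\kappa)\varphi=\mu(d)\gamma_1u_\circ$ directly. If you solve for $\gamma_1u$, as the paper's proof does, the operator that actually arises is $1-K_\kappa\mu(d)$. The two formulations agree through $\mu(d)(1-K_\kappa\mu(d))^{-1}=(1-\mu(d)K_\kappa)^{-1}\mu(d)$, but they coincide literally only when $\mu(d)$ is constant on $\Gamma$.
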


\begin{proof}
If $u\in\mathcal{D}_{d}$ then $u\in H^{1}(\mathbb{R}^{3})\cap L_{\Delta}%
^{2}(\mathbb{R}^{3}\backslash\Gamma)$; by $L_{\Delta}^{2}(\mathbb{R}%
^{3}\backslash\Gamma)\equiv{\mathrm{dom}}(\Delta_{\Omega_{\+}}^{\mathrm{max}}\oplus\Delta_{\Omega}^{\mathrm{max}})$, for
$\operatorname{Im}\kappa>0$ one has (see \cite[Lemma 4.2]{MaPoSi JDE 2016})
\[
u=u_{\circ}+S\!L_{\kappa}\varphi-D\!L_{\kappa}\phi\,,\qquad u_{\circ}\in
H^{2}(\mathbb{R}^{3})\,,\ \varphi\in H^{-3/2}(\Gamma)\,,\ \phi\in
H^{-1/2}(\Gamma)\,.
\]
The boundary conditions (\ref{transmission_bc}), corresponding to%
\begin{equation}
\lbrack\gamma_{0}]u=0\,,\qquad\left[  \gamma_{1}\right]  u=-\mu\left(
d\right)  \,\gamma_{1}u\,, \label{transmission_bc_global}%
\end{equation}
imply $\phi=0$ and $\varphi=\mu(  d)  \,\gamma_{1}u$. This gives
\[
u=u_{\circ}+\mu(  d)  \,S\!L_{\kappa}\gamma_{1}u\,,
\]
where $\gamma_{1}u$ is determined, for any fixed $u_{\circ}$, by the boundary
equation%
\[
(  1-\mu(  d)  \,K_{\kappa})  \gamma_{1}u=\gamma
_{1}u_{\circ}\,.
\]
For $\kappa\in\Sigma_{d}$ there follows%
\[
\gamma_{1}u=(  1-\mu(  d)  \,K_{\kappa})  ^{-1}%
\gamma_{1}u_{\circ}\,,
\]
which leads to%
\begin{equation}
u=u_{\circ}+S\!L_{\kappa}^{-1}(  1-\mu(  d)  \,K_{\kappa})
\mu(  d)  \,\gamma_{1}u_{\circ}\,. \label{dom_shape}%
\end{equation}
Conversely, if $\kappa\in\Sigma_{d}\cap\mathbb{C}_{+}$ and $u$ is defined by
(\ref{dom_shape}) with $u_{\circ}\in H^{2}(\mathbb{R}^{3})$, then $u\in
H^{1}(\mathbb{R}^{3})\cap L_{\Delta}^{2}(\mathbb{R}^{3}\backslash\Gamma)$ (by
the single-layer mapping properties) and, by taking the Neumann trace of
(\ref{dom_shape}), one has%
\[
\gamma_{1}u=\left(  1+S\!L_{\kappa}\left(  1-\mu(  d)  \,K_{\kappa
}\right)  \mu(  d)  \right)  \,\gamma_{1}u_{\circ}\,.
\]
Then, by%
\begin{align*}
&  \left.  \left[  \gamma_{1}\right]  u=-\left(  1-\mu(  d)
\,K_{\kappa}\right)  \mu(  d)  \,\gamma_{1}u_{\circ}=-(1-\mu\left(
d\right)  \,K_{\kappa}+\mu(  d)  \,K_{\kappa})\left(  1-\mu\left(
d\right)  \,K_{\kappa}\right)  ^{-1}\mu(  d)  \,\gamma_{1}%
u_{\circ}\right. \\
&  \left.  -(  1-\mu(  d)  \,K_{\kappa})  \left(
1-\mu(  d)  \,K_{\kappa}\right)  ^{-1}\mu(  d)
\,\gamma_{1}u_{\circ}-\mu(  d)  \,K_{\kappa}^{-1}\left(  1-\mu\left(
d\right)  \,K_{\kappa}\right)  \mu(  d)  \,\gamma_{1}u_{\circ}\right.
\\
&  \left.  -\mu(  d)  \,(\left(  1-K_{\kappa}\left(  1-\mu\left(
d\right)  \,K_{\kappa}\right)  ^{-1}\mu(  d)  \right)  \gamma
_{1}u_{\circ}\right.
\end{align*}
there follows $u\in\mathcal{D}_{d}$.
\end{proof}

\section{The inverse of the $Q$-function}

We consider the operators $Q_{\kappa}( c,d) $ defined in (\ref{Q_cd}). Our
goal is to prove the existence of ${\mathbb{C}}\ni\kappa\mapsto Q_{\kappa
}^{-1}( c,d) $ as an operator-valued meromorphic function. 
We premise a result regarding the analytic mapping properties of the cutoff
free resolvent; we use the notation $M_{\psi}$ to denote the multiplication
operator by the function $\psi$.

\begin{lemma}
\label{Lemma_R_k_analytic}Let $\psi\in\mathcal{C}_{c}^{\infty}(\mathbb{R}%
^{3})$, $s\in\left[  0,2\right]  $. Then $\mathbb{C}\ni\kappa\mapsto
M_{\psi}R_{\kappa}M_{\psi}$ is a $\mathscr{B}(H^{s-2}(\mathbb{R}^{3}%
),H^{s}(\mathbb{R}^{3}))$-valued analytic map.
\end{lemma}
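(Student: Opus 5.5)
The plan is to work with the explicit integral kernel of the cutoff resolvent and to control Sobolev norms through the derivatives of that kernel. The operator $M_\psi R_\kappa M_\psi$ has kernel
\[
k_\kappa(x,y)=\psi(x)\,\frac{e^{i\kappa|x-y|}}{4\pi|x-y|}\,\psi(y)\,.
\]
For every $\kappa\in\mathbb{C}$ this kernel is supported in $\operatorname{supp}\psi\times\operatorname{supp}\psi$, which is compact. The exponential $e^{i\kappa|x-y|}$ is entire in $\kappa$, and its power series $\sum_{m\ge0}\frac{(i\kappa)^m}{m!}|x-y|^m$ converges uniformly on compact sets of $(x,y,\kappa)$. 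This gives the expansion
\[
M_\psi R_\kappa M_\psi=\sum_{m\ge0}\frac{(i\kappa)^m}{m!}\,T_m\,,\qquad T_m:=M_\psi\,\frac{|x-y|^{m-1}}{4\pi}\,M_\psi\,,
\]
with $T_0=M_\psi R_0M_\psi$. Analyticity in $\mathscr{B}(H^{s-2},H^{s})$ then reduces to two facts: each $T_m$ lies in $\mathscr{B}(H^{s-2}(\mathbb{R}^3),H^{s}(\mathbb{R}^3))$, and $\|T_m\|\lesssim C^m\,m^{p}$ for some fixed $p$. With such a bound the series has infinite radius of convergence in operator norm.

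The first step is $m=0$. Here $T_0=M_\psi R_0M_\psi$ is bounded $H^{s-2}\to H^{s}$ by classical elliptic regularity. Write $\tilde\psi$ for a cutoff equal to $1$ on $\operatorname{supp}\psi$. Then $-\Delta(R_0M_\psi f)=M_\psi f$, and $R_0$ maps compactly supported $H^{s-2}$ into $H^{s}_{loc}$ (for instance by splitting the kernel $1/|x|$ into a near part and a smooth far part). Multiplication by $\psi$ keeps the result in $H^s$.

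The second step treats $m\ge1$. Here the factor $|x-y|^{m-1}$ is a kernel with at most homogeneous singularity at the diagonal. Replace $|x-y|^{m-1}$ by $\chi(x-y)|x-y|^{m-1}$, with $\chi\in\mathcal{C}_c^\infty$ equal to $1$ on a neighbourhood of the difference set $\operatorname{supp}\psi-\operatorname{supp}\psi$; since $\psi(x)\psi(y)\neq0$ only when $x-y$ lies in that set, this leaves $T_m$ unchanged. Then $T_m=M_\psi(g_m*\cdot)M_\psi$ with $g_m=\chi|\cdot|^{m-1}/4\pi$. The Fourier transform of $g_m$ decays like $|\xi|^{-2}$: the local singularity of $|x|^{m-1}$ is at worst the one for $m=1$, which is $1/|x|$ up to a smooth part, or better. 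Quantitatively, $|\hat g_m(\xi)|\lesssim C^m m^{p}\langle\xi\rangle^{-2}$. One way to get this is to write $|x|^{m-1}=|x|^{-1}|x|^{m}$ for $m$ even. A cleaner route is to use $-\Delta_x |x|^{m-1}=-(m-1)m\,|x|^{m-3}$ to gain two derivatives, with $\||x|^{m-3}\chi\|_{L^1}\lesssim C^m$. Since $T_m$ is a Fourier multiplier of order $-2$ sandwiched by smooth cutoffs, this yields $\|T_m\|_{H^{s-2}\to H^s}\lesssim m^2C^m$. The cases $m=1,2$, where $|x|^{m-3}$ is non-integrable or a delta appears, are handled directly by the $m=0$ argument.

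The final step sums the series, which converges in norm for all $\kappa$ since $\sum |\kappa|^m m^2C^m/m!<\infty$. It also checks that the sum equals $M_\psi R_\kappa M_\psi$ on $\mathcal{C}_c^\infty$, a dense subset. For $\operatorname{Im}\kappa>0$, where $R_\kappa$ is the true resolvent, this holds by pointwise convergence of the kernels. It extends to all $\kappa$ by definition of the kernel. Uniform convergence of a power series then gives analyticity.

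The main obstacle I expect is the uniform symbol estimate $|\hat g_m(\xi)|\lesssim C^mm^p\langle\xi\rangle^{-2}$ in the second step. The danger is the interaction between growing $m$ and the cutoff derivatives. I would prove it via the Laplacian identity above, estimating $\Delta(\chi|x|^{m-1})$ in $L^1$, which gives an $L^\infty$ bound on $|\xi|^2\hat g_m$.
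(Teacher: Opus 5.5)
Your proof is correct, but it takes a different route from the paper.

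The paper quotes the Dyatlov--Zworski cutoff resolvent estimate $\|M_\psi R_\kappa M_\psi\|_{L^2,H^2}\lesssim(1+|\kappa|)\,e^{L\,\mathrm{Im}(\kappa)_-}$. It transfers this bound to $H^{-2}\to L^2$ by duality and to $H^{s-2}\to H^s$ by interpolation. It then gets analyticity from local boundedness plus weak analyticity on dense sets (Kato's criterion), with the weak analyticity taken from earlier work of the authors.

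You instead expand the kernel in powers of $\kappa$. You then show directly that every Taylor coefficient $T_m$ is a cutoff Fourier multiplier of order $-2$ with $\|T_m\|_{H^{s-2},H^{s}}\lesssim m^2C^m$, so the series converges in operator norm for all $\kappa$.

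Your argument is self-contained: it needs no external resolvent bound and no interpolation, and it works for every real $s$. It also gives the coefficients explicitly. For example, $T_1f=\frac{1}{4\pi}\,\psi\int\psi f$ is the operator behind the first-order term $N_0^{(1)}$ used later in computing the $c_{n,m}$. The price is a cruder growth bound, of order $(1+|\kappa|)^2e^{C|\kappa|}$, instead of one that is polynomial on the closed upper half-plane. This does not matter for analyticity.

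One small correction: the only singular term is $m=0$. For every $m\ge1$ the kernel $|x|^{m-1}$ is continuous and $\Delta(\chi|x|^{m-1})\in L^1$. At $m=1$ the factor $(m-1)m$ vanishes and no delta appears; at $m=2$ the term $|x|^{-1}$ is integrable in $\mathbb{R}^3$. So the cases $m=1,2$ need no separate treatment.
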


\begin{proof}
By \cite[Theorem 3.1]{DyaZwo},
\[
\left\Vert M_{\psi}R_{\kappa}M_{\psi}\right\Vert _{L^{2}(\mathbb{R}^{3}%
),H^{2}(\mathbb{R}^{3})}\lesssim(1+|\kappa|)\,e^{L\,\text{\textrm{Im}}%
(\kappa)_{-}}\,,
\]
where $L>\text{diam}(\text{supp}(\phi))$ and $\text{\textrm{Im}}(\kappa)_{-}$
denotes the negative part. By replacing $\kappa$ with $\bar{k}$, and by
duality, one gets the same upper bound for $\left\Vert M_{\psi}R_{\kappa
}M_{\psi}\right\Vert _{H^{-2}(\mathbb{R}^{3}),L^{2}(\mathbb{R}^{3})}$. Hence,
by interpolation,
\[
\left\Vert M_{\psi}R_{\kappa}M_{\psi}\right\Vert _{H^{s-2}(\mathbb{R}%
^{3}),H^{s}(\mathbb{R}^{3})}\lesssim(1+|\kappa|)\,e^{L\,\text{\textrm{Im}%
}(\kappa)_{-}}\,.
\]
Therefore, the $\mathscr{B}(H^{s-2}(\mathbb{R}^{3}),H^{s}(\mathbb{R}^{3}%
))$-valued map $\mathbb{C}\ni\kappa\mapsto M_{\psi}R_{\kappa}M_{\psi}$ is
locally bounded and so, see, e.g., \cite[page 365]{Kato}), it is enough to
show that $\mathbb{C}\ni\kappa\mapsto\langle M_{\psi}R_{\kappa}M_{\psi
}\,u,v\rangle$ is analytic for any $u$ and $v$ belonging to dense sets in
$H^{2-s}(\mathbb{R}^{3})$ and $H^{-s}(\mathbb{R}^{3})$ respectively. Hence,
the proof is concluded by \cite[equation (5.8)]{MaPoSi JMPA 22}, which implies
the analyticity of the map
\[
\mathbb{C}\ni\kappa\mapsto\langle M_{\psi}R_{\kappa}M_{\psi}\,u,\varphi
\rangle,\qquad\quad u\in H^{s-2}(\mathbb{R}^{3})\,,\ \varphi\in\mathcal{C}%
_{c}^{\infty}(\mathbb{R}^{3})\,.
\]
\hfill
\end{proof}
\begin{lemma}
\label{Lemma_Q_Fredholm} The map ${\mathbb{C}}\ni\kappa\mapsto Q_{\kappa
}(c,d)$ is $\mathscr{B}(  L^{2}(  \Omega)  \oplus
H^{-1/2}(  \Gamma)  )  $-valued analytic. In particular,
\begin{equation}
{\mathbb{C}}\ni\kappa\mapsto%
\begin{bmatrix}
N_{\kappa} & 1_{\Omega}S\!L_{\kappa}\\
\gamma_{1}^{\mathrm{in}}N_{\kappa} & \,K_{\kappa}%
\end{bmatrix}
\label{Fredholm_cond}%
\end{equation}
is an analytic ${\mathfrak{S}}_{\infty}(  L^{2}(  \Omega)
\oplus H^{-1/2}(  \Gamma)  )  $-valued map.
\end{lemma}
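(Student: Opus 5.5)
The plan is to write every block of $Q_{\kappa}(c,d)$, and of the matrix \eqref{Fredholm_cond}, as a composition of fixed bounded operators with the single analytic family $\kappa\mapsto M_{\psi}R_{\kappa}M_{\psi}$ from Lemma \ref{Lemma_R_k_analytic}. We fix $\psi\in\mathcal{C}_{c}^{\infty}(\mathbb{R}^{3})$ with $\psi=1$ on a neighbourhood of $\overline{\Omega}$. Then $1_{\Omega}=1_{\Omega}M_{\psi}$, $1_{\Omega}^{\ast}=M_{\psi}1_{\Omega}^{\ast}$, $\gamma_{0}^{\ast}=M_{\psi}\gamma_{0}^{\ast}$, and the traces $\gamma_{0},\gamma_{1},\gamma_{1}^{\mathrm{in}}$ only see a neighbourhood of $\Gamma$. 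This gives
\[
N_{\kappa}=1_{\Omega}(M_{\psi}R_{\kappa}M_{\psi})1_{\Omega}^{\ast},\quad 1_{\Omega}S\!L_{\kappa}=1_{\Omega}(M_{\psi}R_{\kappa}M_{\psi})\gamma_{0}^{\ast},
\]
\[
\gamma_{1}^{\mathrm{in}}N_{\kappa}=\gamma_{1}^{\mathrm{in}}1_{\Omega}(M_{\psi}R_{\kappa}M_{\psi})1_{\Omega}^{\ast},\quad K_{\kappa}=\gamma_{1}(M_{\psi}R_{\kappa}M_{\psi})\gamma_{0}^{\ast}.
\]
Analyticity of each entry then follows by choosing $s$ in Lemma \ref{Lemma_R_k_analytic} appropriately. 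For the first column we use $1_{\Omega}^{\ast}:L^{2}(\Omega)\to L^{2}(\mathbb{R}^{3})\subset H^{s-2}$ with $s=2$, so the output lies in $H^{2}$. Then $1_{\Omega}:H^{2}(\mathbb{R}^{3})\to L^{2}(\Omega)$ and $\gamma_{1}^{\mathrm{in}}1_{\Omega}:H^{2}(\mathbb{R}^{3})\to H^{1/2}(\Gamma)\hookrightarrow H^{-1/2}(\Gamma)$ are bounded. For the second column we use $\gamma_{0}^{\ast}:H^{-1/2}(\Gamma)\to H^{-1}(\mathbb{R}^{3})$ (the dual of $\gamma_{0}:H^{1}\to H^{1/2}$) with $s=1$, so the output lies in $H^{1}(\mathbb{R}^{3})$, and $1_{\Omega}$ maps this into $L^{2}(\Omega)$.

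The entry $K_{\kappa}$ is the delicate one, because $\gamma_{1}$ is not bounded on $H^{1}(\mathbb{R}^{3})$. For this entry I would not apply the lemma directly. I would instead use that $\kappa\mapsto K_{\kappa}$ is an analytic family of compact operators in $H^{-1/2}(\Gamma)$, as recalled in Section \ref{Sec_Transmission} (via $K_{\kappa}=N\!P_{-\bar\kappa}^{\ast}$ and \cite[Lemma 5.3]{MaPoSi JMPA 22}). An alternative is the decomposition $K_{\kappa}=K_{0}+\gamma_{1}(R_{\kappa}-R_{0})\gamma_{0}^{\ast}$: the difference has the kernel $(e^{i\kappa|x-y|}-1)/(4\pi|x-y|)$, which is smooth in $\kappa$ and bounded in $x,y$, and it maps $H^{-1}$ into $H^{3}_{loc}$ analytically, using $R_{\kappa}-R_{0}=\kappa^{2}R_{0}R_{\kappa}$ together with the cutoff lemma twice. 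Once each entry is known to be analytic, $Q_{\kappa}(c,d)$ is analytic, since it is an affine combination of these entries with the bounded multipliers $cd$, $(cd-1)\kappa^{2}$ and $\mu(d)$.

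Compactness of \eqref{Fredholm_cond} follows entrywise by exhibiting a gain of regularity combined with a compact embedding, using the bounded domain $\Omega$ and the compact manifold $\Gamma$:
\begin{itemize}
\item $N_{\kappa}$ and $1_{\Omega}S\!L_{\kappa}$ factor through $H^{2}(\Omega)$ and $H^{1}(\Omega)$ respectively, and both embed compactly into $L^{2}(\Omega)$ by Rellich.
\item $\gamma_{1}^{\mathrm{in}}N_{\kappa}$ factors through $H^{1/2}(\Gamma)\hookrightarrow H^{-1/2}(\Gamma)$, which is compact.
\item $K_{\kappa}$ is compact by the result recalled above.
\end{itemize}
Since the set of compact operators is closed in operator norm and the entries are analytic in norm, the whole block map is $\mathfrak{S}_{\infty}$-valued and analytic.

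The main obstacle is the normal-derivative entry $K_{\kappa}$, together with checking that the first-column trace $\gamma_{1}^{\mathrm{in}}1_{\Omega}$ is applied to $H^{2}$ functions, which requires the $\mathcal{C}^{2}$ boundary. Both points are settled by relying on the already recalled analyticity of $N\!P_{\kappa}$ rather than on Lemma \ref{Lemma_R_k_analytic}.
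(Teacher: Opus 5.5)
Your proposal is correct and is essentially the paper's proof. Both arguments factor the entries through $M_{\psi}R_{\kappa}M_{\psi}$ and apply Lemma \ref{Lemma_R_k_analytic}: with $s=2$ for the first column, and with $s=1$ together with $\gamma_{0}^{\ast}\in\mathscr{B}(H^{-1/2}(\Gamma),H^{-1}(\mathbb{R}^{3}))$ for $1_{\Omega}S\!L_{\kappa}$. Both obtain compactness from the embeddings $H^{2}(\Omega),H^{1}(\Omega)\hookrightarrow L^{2}(\Omega)$ and $H^{1/2}(\Gamma)\hookrightarrow H^{-1/2}(\Gamma)$, and both take the analyticity and compactness of $K_{\kappa}$ from the facts recalled in Section \ref{Sec_Transmission}. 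Your optional alternative for $K_{\kappa}$ is not needed, and as you justify it, it fails for $\operatorname{Im}\kappa<0$: you cannot split $R_{0}R_{\kappa}$ by inserting cutoffs between the two resolvents, so the $H^{3}_{\mathrm{loc}}$ gain should instead come from $-\Delta(R_{\kappa}-R_{0})=\kappa^{2}R_{\kappa}$ and interior elliptic regularity.
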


\begin{proof}
We consider the operator entries of \eqref{Fredholm_cond}. Let $r>0$ be large
enough so that $\Omega\subset\subset B_{r}=\left\{  x:\left\vert x\right\vert
<r\right\}  $ and let $\psi\in\mathcal{C}_{c}^{\infty}(\mathbb{R}^{3})$ be
such that $\psi|B_{r}=1$. Since $N_{\kappa}=1_{\Omega}M_{\psi}R_{\kappa
}M_{\psi}1_{\Omega}^{\ast}$, by Lemma \ref{Lemma_R_k_analytic}, one has
$N_{\kappa}\in\mathscr{B}(L^{2}(\Omega),H^{2}(\Omega))$ and $\gamma
_{1}^{\mathrm{in}}N_{\kappa}\in\mathscr{B}\left(  L^{2}(  \Omega)
,H^{1/2}(  \Gamma)  \right)  $ for any $\kappa\in\mathbb{C}$. Then,
by the compact embeddings $H^{2}(\Omega)\hookrightarrow L^{2}(\Omega)$ and
$H^{1/2}(\Gamma)\hookrightarrow H^{-1/2}(\Gamma)$, one gets $N_{\kappa}%
\in{\mathfrak{S}}_{\infty}(L^{2}(\Omega))$ and $\gamma_{1}^{\mathrm{in}%
}N_{\kappa}\in{\mathfrak{S}}_{\infty}(L^{2}(\Omega),H^{1/2}(\Gamma))$. By
Lemma \ref{Lemma_R_k_analytic}, $\kappa\mapsto N_{\kappa}$ and $\kappa
\mapsto\gamma_{1}^{\mathrm{in}}N_{\kappa}$ are analytic in $\mathscr{B}%
(L^{2}(  \Omega)  )$ and $\mathscr{B}(L^{2}(  \Omega)
,H^{-1/2}(\Gamma))$ respectively.

By the mapping properties of the single layer potential, see e.g. \cite[Chp.
6]{McLe}, for each $\kappa\in\mathbb{C}$ one has $1_{\Omega}S\!L_{\kappa}%
\in\mathscr{B}(H^{-1/2}(\Gamma),H^{1}(  \Omega)  )$ and from the
compact embedding $H^{1}(  \Omega)  \hookrightarrow L^{2}\left(
\Omega\right)  $ there follows that $1_{\Omega}S\!L_{\kappa}\in{\mathfrak{S}%
}_{\infty}(H^{-1/2}(\Gamma),L^{2}(\Omega))$. Moreover, since $1_{\Omega
}S\!L_{\kappa}=1_{\Omega}M_{\psi}R_{\kappa}M_{\psi}\gamma_{0}^{\ast}$, by
$\gamma_{0}^{\ast}\in\mathscr{B}(H^{-1/2}(\Gamma),H^{-1}(\mathbb{R}^{3}))$ and
Lemma \ref{Lemma_R_k_analytic}, $\kappa\mapsto1_{\Omega}S\!L_{\kappa}$ is an
analytic $\mathscr{B}(H^{-1/2}(\Gamma),H^{1}(\Omega))$-valued map.

As recalled in Section \ref{Sec_Transmission}, $\kappa\mapsto K_{\kappa}%
\in\mathscr{B}(H^{-1/2}(\Gamma))$ is analytic in $\mathbb{C}$ and with values
in the set of compact operators.
\end{proof}

We next address the problem of the inversion of $Q_{\kappa}( c,d) $.

\begin{theorem}
\label{Theorem_Q_full}If \emph{H0} holds, then $\mathbb{C}\ni\kappa\mapsto
Q_{\kappa}^{-1}(c,d)\in\mathscr{B}\left(  L^{2}(  \Omega)  \oplus
H^{-1/2}(  \Gamma)  \right)  $ is finitely-meromorphic. In
particular:\medskip\newline$i)$ If $1\notin\sigma\left(  \mu\left(
{d}\right)   K_{0}\right)  $, then $\kappa\mapsto Q_{\kappa}^{-1}(c,d)$ is
analytic at $\kappa=0$.\medskip\newline$ii)$ If $1\in\sigma\left(  \mu\left(
{d}\right)   K_{0}\right)  $, then $\kappa\mapsto Q_{\kappa}^{-1}(c,d)$ has a
pole at $\kappa=0$.
\end{theorem}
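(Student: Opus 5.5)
The plan is to reduce everything to the analytic Fredholm theorem, as in the proof of Theorem \ref{Theorem_inverse}. Since $c_\ell d_\ell\neq 0$ for every $\ell$, multiplication by $cd$ is an isomorphism of $L^2(\Omega)$. Hence
\[
Q_\kappa(c,d)=\begin{bmatrix} cd & 0\\ 0 & 1\end{bmatrix}\big(1+C_\kappa\big),
\]
where the entries of $C_\kappa$ are, up to the bounded multipliers $(cd)^{-1}(cd-1)\kappa^2$ and $-\mu(d)$, exactly the entries of the operator matrix \eqref{Fredholm_cond}. By Lemma \ref{Lemma_Q_Fredholm}, $\kappa\mapsto C_\kappa$ is an entire ${\mathfrak S}_\infty(L^2(\Omega)\oplus H^{-1/2}(\Gamma))$-valued map. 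The meromorphic Fredholm theorem \cite[Theorem XIII.13]{ReSi IV} then leaves two alternatives: either $Q_\kappa(c,d)$ is invertible for no $\kappa$, or $\kappa\mapsto Q^{-1}_\kappa(c,d)$ is finitely meromorphic on $\mathbb C$. So the whole proof reduces to exhibiting one point of invertibility and then analysing $\kappa=0$.

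At $\kappa=0$ the matrix is lower triangular:
\[
Q_0(c,d)=\begin{bmatrix} cd & 0\\ -\mu(d)\gamma_1^{\mathrm{in}}N_0 & 1-\mu(d)K_0\end{bmatrix}.
\]
It is therefore invertible if and only if $1-\mu(d)K_0$ is invertible, with inverse given explicitly by the usual triangular formula. In case $i)$, $1-\mu(d)K_0$ is Fredholm of index zero and injective, hence invertible. This already supplies the point of invertibility, gives finite meromorphy, and shows analyticity at $0$, since invertibility is an open condition and $\kappa\mapsto Q_\kappa$ is analytic.

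In case $ii)$, let $\varphi$ span $\ker(1-\mu(d)K_0)$. Then $(0,\varphi)\in\ker Q_0(c,d)$, so $0$ is not a point of invertibility and we need another one. I would prove that $\ker Q_\kappa(c,d)=\{0\}$ for some $\kappa\in\mathbb C_+\setminus i\mathbb R_+$; Fredholmness of index $0$ then gives invertibility there. Take $(f,\phi)\in\ker Q_\kappa(c,d)$ and set $u:=R_\kappa 1_\Omega^* f+S\!L_\kappa\phi$, which lies in $H^1(\mathbb R^3)\cap L^2_\Delta(\mathbb R^3\setminus\Gamma)$ because $\operatorname{Im}\kappa>0$. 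The second row of $Q_\kappa u$-type relations gives $\phi=\mu(d)\gamma_1 u$ (using $[\gamma_1]S\!L_\kappa\phi=-\phi$), so $u\in\mathcal D_d$ by \eqref{transmission_bc_global}. The first row gives $cd\,f=-(cd-1)\kappa^2 1_\Omega u$. On $\Omega$ one has $-\Delta u=\kappa^2u+f$; substituting, I expect this to be equivalent to $-cd\,\Delta u=\kappa^2 u$, while $-\Delta u=\kappa^2u$ on $\Omega_{\mathrm{ex}}$. Thus $L_{c,d}u=\kappa^2u$ with $u\in L^2$. Symmetry of $L_{c,d}$ in $L^2(\mathbb R^3,c^{-1})$, established via \eqref{int_part}, together with $\kappa^2\notin\mathbb R$, forces $u=0$. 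Then $f=0$ from the first-row relation, and $\phi=\mu(d)\gamma_1u=0$. The main obstacle is exactly this bookkeeping: checking carefully that the two rows of $Q_\kappa(c,d)$ encode precisely the interior equation and the transmission condition, including the sign conventions for $K_\kappa=\gamma_1S\!L_\kappa$ and for the jump.

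Finally, with meromorphy established in case $ii)$, $\kappa=0$ must be a genuine pole. If $Q_\kappa^{-1}(c,d)$ were bounded near $0$, then by Riemann's removable singularity theorem it would extend analytically, and $Q_0(c,d)$ would have a bounded inverse by continuity. This contradicts $(0,\varphi)\in\ker Q_0(c,d)$.
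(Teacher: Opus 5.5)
Your proof is correct. Its skeleton matches the paper's: the factorization $Q_\kappa(c,d)=\mathrm{diag}(cd,1)(1+\text{compact})$, the analytic Fredholm alternative, and a contradiction with the symmetry of $L_{c,d}$ at a non-real $\kappa^2$. The difference lies in how you exclude the ``nowhere invertible'' alternative.

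The paper works at a point $\kappa\in\Sigma_d\setminus\{0\}$, i.e.\ where $1-\mu(d)K_\kappa$ is invertible. It eliminates $\varphi$ through $(1-\mu(d)K_\kappa)^{-1}$ and builds the eigenfunction from the domain representation \eqref{dom_id}. This needs two further inputs:
\begin{itemize}
\item Theorem \ref{Theorem_inverse}, whose proof uses \emph{H0} through Kato--Rellich, just to know that such points exist in $\mathbb{C}_+\setminus i\mathbb{R}_+$;
\item the fact $\mathrm{ran}(S\!L_\kappa)\cap H^2(\mathbb{R}^3)=\{0\}$, to see that the eigenfunction is nonzero.
\end{itemize}

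You instead take an arbitrary kernel element $(f,\phi)$ and set $u=R_\kappa 1_\Omega^*f+S\!L_\kappa\phi$. The identity $\gamma_1u=\gamma_1^{\mathrm{in}}N_\kappa f+K_\kappa\phi$ turns the second row into $\phi=\mu(d)\gamma_1u$. The first row gives $cd\,f=-(cd-1)\kappa^2u|_\Omega$. Since both $f$ and $\phi$ are recovered from $u$, the vanishing of $u$ forced by symmetry kills the whole kernel element.

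This bypasses $\Sigma_d$ and Theorem \ref{Theorem_inverse} entirely and gives injectivity at every $\kappa\in\mathbb{C}_+\setminus i\mathbb{R}_+$. It also never uses \emph{H0}: finite meromorphy and both items $i)$ and $ii)$ follow from your argument without assuming simplicity of the eigenvalue $1$. The paper's route keeps the analysis tied to the pencil $1-\mu(d)K_\kappa$ and to \eqref{dom_id}, but at the cost of importing Theorem \ref{Theorem_inverse}.

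Your treatment of $i)$ via the lower-triangular form of $Q_0(c,d)$ is equivalent to the paper's Schur-complement argument, whose Schur complement at $\kappa=0$ is just $cd$. Your removable-singularity argument for $ii)$ makes explicit a step the paper leaves implicit.
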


\begin{proof}
The map $\kappa\mapsto Q_{\kappa}(c,d)$ is analytic in $\mathbb{C}$ and
recasts in the form%
\begin{equation}
Q_{\kappa}(c,d)=%
\begin{bmatrix}
cd & 0\\
0 & 1
\end{bmatrix}
\left(  \,%
\begin{bmatrix}
1 & 0\\
0 & 1
\end{bmatrix}
+%
\begin{bmatrix}
\frac{1}{cd}(cd-1)\kappa^{2} & 0\\
0 & -\mu(  d)  \,
\end{bmatrix}%
\begin{bmatrix}
N_{\kappa} & 1_{\Omega}S\!L_{\kappa}\\
\gamma_{1}^{\mathrm{in}}N_{\kappa} & \,K_{\kappa}%
\end{bmatrix}
\,\right)  \,. \label{Q_cd_id}%
\end{equation}
By the result of the Lemma \ref{Lemma_Q_Fredholm}, the meromorphic Fredholm
theory (see \cite[Theorem XIII.13]{ReSi IV}) applies to the r.h.s. of
(\ref{Q_cd_id}). Hence: either $\kappa\mapsto Q_{\kappa}^{-1}\left(
c,d\right)  \in\mathscr{B}\left(  L^{2}(\Omega)\oplus H^{-1/2}\left(
\Gamma\right)  \right)  $ is finitely-meromorphic in $\mathbb{C}$ or
$Q_{\kappa}(c,d)$ is nowhere invertible. We next show that this last statement
leads to a contradiction.

Assume that $Q_{\kappa}(c,d)$ is not invertible for all $\kappa\in\mathbb{C}$.
By the Fredholm alternative, $\ker Q_{\kappa}(  c,d)  \neq\left\{
(0,0)\right\}  $, i.e., there exists a non-trivial $\left(  u(\kappa
),\varphi(\kappa)\right)  \in L^{2}(\Omega)\oplus H^{-1/2}(\Gamma)$ solving
the equation
\[
Q_{\kappa}(c,d)\left(  u(\kappa),\varphi(  \kappa)  \right)
=(0,0)\,,
\]
equivalently,
\begin{equation}
\left\{
\begin{array}
[c]{l}%
\left(  cd+(cd-1)\kappa^{2}N_{\kappa}\right)  u(\kappa)+(cd-1)\kappa
^{2}1_{\Omega}S\!L_{\kappa}\varphi(\kappa)=0\,,\\
-\mu(  d)  \,\gamma_{1}^{\mathrm{in}}N_{\kappa}u(\kappa)+\left(
1-\mu(  d)  \,K_{\kappa}\right)  \varphi(\kappa)=0\,.
\end{array}
\right.  \label{Q_ker_eq}%
\end{equation}
We next assume $\kappa\in\Sigma_{d}\backslash\{0\}$ defined in (\ref{sigma-d}%
). Notice that%
\[
\kappa\in\Sigma_{d}\quad\Leftrightarrow\quad\ker\left(  1-\mu(  d)
\,K_{\kappa}\right)  =\{0\}\,.
\]
From the second equation, we have%
\begin{equation}
\varphi(\kappa)=(  1-\mu(  d)  \,K_{\kappa})  ^{-1}%
\mu(  d)  \,\gamma_{1}^{\mathrm{in}}N_{\kappa}u(\kappa)\,.
\label{Q_ker_cond}%
\end{equation}
Since $\left(  u(\kappa),\varphi(\kappa)\right)  \neq\left(  0,0\right)  $,
this implies: $u(\kappa)\neq0$. Using (\ref{d_sign_ass}) and (\ref{Q_ker_cond}%
), the above system rephrase as%
\begin{equation}
\left\{
\begin{array}
[c]{l}%
\Psi_{\kappa}\,u(\kappa)=\frac{1}{\kappa^{2}}\,\frac{cd}{1-cd}\,u(\kappa)\,,\\
\varphi(\kappa)=(  1-\mu(  d)  \,K_{\kappa})  ^{-1}%
\mu(  d)  \,\gamma_{1}^{\mathrm{in}}N_{\kappa}u(\kappa)\,,
\end{array}
\right.  \label{Q_ker_eq_1}%
\end{equation}
where $\Psi_{\kappa}\in\mathscr{B}(L^{2}(\Omega))$ is defined by
\[
\Psi_{\kappa}:=N_{\kappa}+1_{\Omega}S\!L_{\kappa}\left(  1-\mu\left(
{d}\right)  \,K_{\kappa}\right)  ^{-1}\mu(  d)  \,\gamma
_{1}^{\mathrm{in}}N_{\kappa}\,.
\]
Therefore, $u(\kappa)$ is an eigenvectors of $\Psi_{\kappa}$. Let us introduce
$\psi(\kappa)\in L^{2}(\mathbb{R}^{3})$ defined by%
\[
\psi(\kappa):=R_{\kappa}1_{\Omega}^{\ast}u(\kappa)+S\!L_{\kappa}\left(
1-\mu(  d)  \,K_{\kappa}\right)  ^{-1}\mu(  d)
\,\gamma_{1}R_{\kappa}1_{\Omega}^{\ast}u(\kappa)u(\kappa)\,.
\]
By (\ref{dom_id}) (with $u_{\circ}=R_{\kappa}1_{\Omega}^{\ast}u(\kappa)$), one has
$\psi(\kappa)\in\mathcal{D}_{d}$. By (\ref{Q_ker_eq_1}), it results%
\begin{equation}
\psi(\kappa)|\Omega=\Psi_{\kappa}u(\kappa)\,. \label{psi_def}%
\end{equation}
Since $\mathrm{ran}(S\!L_{\kappa})\cap H^{2}(\mathbb{R}^{3})=\{0\}$ and
$u(\kappa)\neq0$, one has $\psi(\kappa)\neq0$. Furthermore,%
\[
\left.  \left(  L_{c,d}-\kappa^{2}\right)  \psi(\kappa)\right\vert
\Omega_{\mathrm{ex}}=\left.  \left(  -\Delta-\kappa
^{2}\right)  \psi(\kappa)\right\vert \mathbb{R}^{3}\backslash\overline{\Omega
}=\left.  1_{\Omega}^{\ast}u(\kappa))\right\vert \mathbb{R}^{3}\backslash
\overline{\Omega}=0\,,
\]
and
\begin{align*}
\left(  L_{c,d}-\kappa^{2}\right)  \psi(\kappa)|\Omega=  &  (-cd\,\Delta
-\kappa^{2})\psi(\kappa)|\Omega=cd\,(-\Delta-\kappa^{2})\psi(\kappa
)|\Omega-(1-cd)\,\kappa^{2}\psi(\kappa)|\Omega\\
=  &  cd\,u(\kappa)-(1-cd)\,\kappa^{2}\psi(\kappa)|\Omega=0\,.
\end{align*}
Hence, for any $\kappa\in\Sigma_{d}\backslash\left[  0,+\infty\right)  $,
$\psi(\kappa)$ is an eigenfunction of $L_{c,d}$ with eigenvalue $\kappa^{2}$.
However, since $L_{c,d}$ is symmetric, this is only possible when $\kappa
^{2}\in\mathbb{R}$.

If $1\notin\sigma\left(  \mu(  d)   K_{0}\right)  $,
the\ results in $(i)$-Theorem \ref{Theorem_inverse} applies and $\kappa\mapsto
(  1-\mu(  d)  \,K_{\kappa})  ^{-1}$ is
analytic at $\kappa=0$. The Schur complement of the second diagonal entry is
(\ref{Q_cd}) is%
\[
C_{\kappa}:=cd+({cd}-1)\kappa^{2}\left(  1+1_{\Omega}S\!L_{\kappa}\left(
1-\mu(  d)  \,K_{\kappa}\right)  ^{-1}\mu(  d)
\,\gamma_{1}^{\mathrm{in}}\right)  N_{\kappa}\,.
\]
Since $\kappa\mapsto C_{\kappa}$ is analytic in a neighborhood of $\kappa=0$
and $C_{0}$ has bounded inverse, $\kappa\mapsto C_{\kappa}^{-1}$ is analytic
in a neighborhood of $\kappa=0$ as well. Then $(i)$ follows using Schur's
complement argument.

If $1\in\sigma\left(  \mu(  d)   K_{0}\right)  $, by
$(ii)$-Theorem \ref{Theorem_inverse}, $\left(  1-\mu(  d)
 K_{0}\right)  $ has non-trivial kernel. Hence, if $\varphi\in\ker\left(
1-\mu(  d)   K_{0}\right)  $, then $(0,\varphi)\in\ker
Q_{0}(c,d)$.
\end{proof}

\section{The resolvent formula and self-adjointness}

From now on, the hypothesis \emph{H0}, ensuring the existence of $Q_{\kappa
}^{-1}(c,d)$, is assumed. Let us introduce the bounded maps%
\begin{align*}
&  \left.  \tau:H_{\Delta}^{1}(\mathbb{R}^{3}\backslash\Gamma)\rightarrow
H_{\Delta}^{1}(\mathbb{R}^{3}\backslash\Gamma)\oplus H^{1/2}(\Gamma)\oplus
H^{-1/2}(\Gamma)\,,\right. \\
&  \left.  \tau u:=\tau_{1}u\oplus\tau_{2}u\,,\qquad\tau_{1}u:=u\,,\qquad
\tau_{2}u:=\gamma_{0}u\oplus\gamma_{1}u\,,\right. \\
& \\
&  \left.  G_{\kappa}:L^{2}(\mathbb{R}^{3})\oplus H^{-1/2}(\Gamma)\oplus
H^{1/2}(\Gamma)\rightarrow H_{\Delta}^{1}(\mathbb{R}^{3}\backslash
\Gamma)\,,\right. \\
&  \left.  G_{\kappa}(u\oplus\varphi\oplus\phi):=R_{\kappa}u+S\!L_{\kappa
}\varphi+D\!L_{\kappa}\phi\right. \\
& \\
&  \left.  B_{1}(cd):=(cd-1)\Delta_{\Omega}^{\mathrm{max}}1_{\Omega}%
\in\mathscr{B}((H_{\Delta}^{1}(\mathbb{R}^{3}\backslash\Gamma)),L^{2}%
(\Omega))\,,\right. \\
& \\
&  \left.  B_{2}(d\,):=%
\begin{bmatrix}
0 & \mu(  d)  \,\\
0 & 0
\end{bmatrix}
\in\mathscr{B}(H^{1/2}(\Gamma)\oplus H^{-1/2}(\Gamma),H^{-1/2}(\Gamma)\oplus
H^{1/2}(\Gamma))\right.
\end{align*}
and consider the operator-valued function
\begin{equation}
\kappa\mapsto(1+(B_{1}(cd)\oplus B_{2}(d\,))\tau G_{\kappa})\in\mathscr{B}%
(L^{2}(\mathbb{R}^{3})\oplus H^{-1/2}(\Gamma)\oplus H^{1/2}(\Gamma))\,,
\label{Q_Krein}%
\end{equation}
where%
\[
\tau G_{\kappa}:=%
\begin{bmatrix}
R_{\kappa} & S\!L_{\kappa} & D\!L_{\kappa}\\
\gamma_{0}R_{\kappa} & S_{\kappa} & K_{\kappa}^{\ast}\\
\gamma_{1}R_{\kappa} & K_{\kappa} & D_{\kappa}%
\end{bmatrix}
\,.
\]
Then (\ref{L_cd_weak}) recasts as
\begin{equation}
L_{c,d}=-\Delta_{\mathbb{R}^{3}}-B_{1}(cd)+(\gamma_{0}\oplus\gamma_{1})^{\ast
}B_{2}(d\,)(\gamma_{0}\oplus\gamma_{1})\,. \label{A_cd_weak_id}%
\end{equation}
Our aim is to prove, through a Kre\u{\i}n-type resolvent formula, the
self-adjointness of $L_{c,d}$.

\begin{theorem}
\label{Theorem_Krein_EM}$L_{c,d}$ is self-adjoint in $L^{2}(\mathbb{R}%
^{3},c^{-1})$ and the resolvent formula%
\begin{equation}
R_{\kappa}(  c,d)  :=\left(  L_{c,d}-\kappa^{2}\right)
^{-1}=R_{\kappa}+%
\begin{bmatrix}
R_{\kappa}1_{\Omega}^{\ast} & S\!L_{\kappa}%
\end{bmatrix}
Q_{\kappa}^{-1}(  c,d)
\begin{bmatrix}
(cd-1)\Delta_{\Omega}^{\mathrm{max}}1_{\Omega}R_{\kappa}\\
\mu(  d)  \,\gamma_{1}R_{\kappa}%
\end{bmatrix}
\label{Krein_EM}%
\end{equation}
holds for any $\kappa\in\mathbb C_{+}$ such that $\kappa^{2}\in{\varrho}(L_{c,d})$.
\end{theorem}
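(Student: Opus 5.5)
The plan is to prove self-adjointness and the resolvent formula together: first show that $Q_{\kappa}(c,d)$ is invertible whenever $\kappa\in\mathbb{C}_{+}$ and $\kappa^{2}\notin\mathbb{R}$, and then show that the right-hand side of (\ref{Krein_EM}), call it $\tilde R_{\kappa}$, is a right inverse of $L_{c,d}-\kappa^{2}$ with range in $\mathcal{D}_{d}$. Together with the symmetry of $L_{c,d}$ in $L^{2}(\mathbb{R}^{3},c^{-1})$ already established via (\ref{int_part}), surjectivity of $L_{c,d}-\kappa^{2}$ and $L_{c,d}-\bar\kappa^{2}$ for one non-real $\kappa^{2}$ gives self-adjointness. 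We use the weighted space because there $L_{c,d}$ is symmetric and the eigenvalue equation is pointwise.

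\emph{Step 1 (a kernel element of $Q_{\kappa}$ is an eigenfunction).} Let $(u,\varphi)\in\ker Q_{\kappa}(c,d)$ with $\operatorname{Im}\kappa>0$, and set $\psi:=R_{\kappa}1_{\Omega}^{\ast}u+S\!L_{\kappa}\varphi\in L^{2}(\mathbb{R}^{3})$. Since $R_{\kappa}1_{\Omega}^{\ast}u\in H^{2}(\mathbb{R}^{3})$, the jump relations of the single layer give
\[
[\gamma_{0}]\psi=0\,,\qquad [\gamma_{1}]\psi=-\varphi\,,\qquad \gamma_{1}\psi=\gamma_{1}^{\mathrm{in}}N_{\kappa}u+K_{\kappa}\varphi\,.
\]
The second row of $Q_{\kappa}(c,d)(u,\varphi)=0$ reads $\varphi=\mu(d)(\gamma_{1}^{\mathrm{in}}N_{\kappa}u+K_{\kappa}\varphi)=\mu(d)\gamma_{1}\psi$. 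This is exactly (\ref{transmission_bc_global}), so $\psi\in\mathcal{D}_{d}$.

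Away from $\Gamma$ we have $(-\Delta-\kappa^{2})\psi=1_{\Omega}^{\ast}u$. Hence $(L_{c,d}-\kappa^{2})\psi=0$ on $\Omega_{\mathrm{ex}}$, while on $\Omega$
\[
(-cd\Delta-\kappa^{2})\psi=cd\,u-(1-cd)\kappa^{2}\psi|\Omega\,.
\]
This vanishes by the first row of $Q_{\kappa}(c,d)(u,\varphi)=0$, since $\psi|\Omega=N_{\kappa}u+1_{\Omega}S\!L_{\kappa}\varphi$.

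If $\psi=0$, then $u=(-\Delta-\kappa^{2})\psi|\Omega=0$ and $\varphi=-[\gamma_{1}]\psi=0$. So a nontrivial kernel element yields an eigenvalue $\kappa^{2}$ of the symmetric operator $L_{c,d}$, which forces $\kappa^{2}\in\mathbb{R}$. Consequently $Q_{\kappa}(c,d)$ is injective, hence invertible by the Fredholm structure (\ref{Q_cd_id}) and Lemma \ref{Lemma_Q_Fredholm}, whenever $\kappa\in\mathbb{C}_{+}\setminus i\mathbb{R}_{+}$.

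\emph{Step 2 (the candidate resolvent is a right inverse).} For $f\in L^{2}(\mathbb{R}^{3})$ put
\[
(w,\varphi):=Q_{\kappa}^{-1}(c,d)\big((cd-1)\Delta_{\Omega}^{\max}1_{\Omega}R_{\kappa}f,\ \mu(d)\gamma_{1}R_{\kappa}f\big)
\]
and $u:=R_{\kappa}f+R_{\kappa}1_{\Omega}^{\ast}w+S\!L_{\kappa}\varphi$.

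We repeat the computation of Step 1 with the inhomogeneous terms. Using $\Delta_{\Omega}^{\max}1_{\Omega}R_{\kappa}f=-(f+\kappa^{2}R_{\kappa}f)|\Omega$, the second row gives $\varphi=\mu(d)\gamma_{1}u$, so $u\in\mathcal{D}_{d}$. The first row gives $(-cd\Delta-\kappa^{2})u|\Omega=f|\Omega$, and $(-\Delta-\kappa^{2})u=f$ on $\Omega_{\mathrm{ex}}$ holds trivially. Hence $(L_{c,d}-\kappa^{2})\tilde R_{\kappa}f=f$.

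Injectivity of $L_{c,d}-\kappa^{2}$ for non-real $\kappa^{2}$ follows from symmetry. Taking both $\kappa$ and $-\bar\kappa$ in $\mathbb{C}_{+}\setminus i\mathbb{R}_{+}$ shows that $\operatorname{ran}(L_{c,d}-z)=L^{2}$ for $z=\kappa^{2}$ and $z=\bar{\kappa}^{2}$, so $L_{c,d}$ is self-adjoint in $L^{2}(\mathbb{R}^{3},c^{-1})$ and $(L_{c,d}-\kappa^{2})^{-1}=\tilde R_{\kappa}$ there.

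\emph{Step 3 (extension to all admissible $\kappa$).} For the remaining $\kappa\in\mathbb{C}_{+}$ with $\kappa^{2}\in\varrho(L_{c,d})$ (i.e. $\kappa\in i\mathbb{R}_{+}$ with $\kappa^{2}$ not a negative eigenvalue), both sides of (\ref{Krein_EM}) are meromorphic in $\kappa$ by Theorem \ref{Theorem_Q_full}. They agree off $i\mathbb{R}_{+}$, so they agree wherever both are analytic. Moreover, Step 1 shows that $Q_{\kappa}(c,d)$ has trivial kernel when $\kappa^{2}$ is not an eigenvalue, so the right-hand side has no pole there.

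\emph{Main obstacle.} The delicate point is the bookkeeping in Steps 1–2: one must check that $\gamma_{1}R_{\kappa}1_{\Omega}^{\ast}w=\gamma_{1}^{\mathrm{in}}N_{\kappa}w$, keep the signs of the single-layer jumps consistent with (\ref{transmission_bc_global}) and with $\mu(d)$, and confirm that $u\in H^{1}\cap L_{\Delta}^{2}(\mathbb{R}^{3}\backslash\Gamma)$ via the mapping properties of $S\!L_{\kappa}$. If a sign mismatch appears, I would fall back on the parametrization (\ref{dom_id}) of $\mathcal{D}_{d}$ for $\kappa\in\Sigma_{d}$.
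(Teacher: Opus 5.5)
Your proof is correct, but it takes a different route from the paper's. The paper does not verify \eqref{Krein_EM} by hand. It imports from the authors' earlier acoustic paper two things: that $1+(B_{1}(cd)\oplus B_{2}(d))\tau G_{\kappa}$ is invertible if and only if $Q_{\kappa}(c,d)$ is, and abstract theorems producing a closed operator $\hat L$ whose resolvent is the right-hand side of \eqref{Krein_EM}. It then identifies $\hat L$ with $L_{c,d}$ by a further cited theorem and gets self-adjointness from symmetry. The invertibility of $Q_{\kappa}$ on an open set comes from Theorem \ref{Theorem_Q_full}, whose proof goes through the parametrization \eqref{dom_id} and is therefore restricted to $\kappa\in\Sigma_{d}$.

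You replace all of this with a direct computation. Setting $\psi=R_{\kappa}1_{\Omega}^{\ast}u+S\!L_{\kappa}\varphi$ turns the second row of $Q_{\kappa}$ into the transmission condition and the first row into the interior equation. This gives injectivity, hence invertibility, of $Q_{\kappa}$ at every $\kappa\in\mathbb{C}_{+}\setminus i\mathbb{R}_{+}$, without H0 and without $\Sigma_{d}$. The same bookkeeping shows the candidate is a right inverse with range in $\mathcal{D}_{d}$: from $\Delta_{\Omega}1_{\Omega}R_{\kappa}f=-(f+\kappa^{2}R_{\kappa}f)|\Omega$ you get $cd\,w+(cd-1)\kappa^{2}u|\Omega=(1-cd)f|\Omega$, and the signs check out. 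Your route is elementary and self-contained. The paper's route places $L_{c,d}$ inside the general Kre\u{\i}n-type framework built on the weak form \eqref{A_cd_weak_id}, at the price of depending on external results.

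One small point concerns your Step 3. At that stage the left-hand side is only known to be analytic on the open set $\{\kappa\in\mathbb{C}_{+}:\kappa^{2}\in\varrho(L_{c,d})\}$, not meromorphic, so any continuation argument has to be made locally. It is also unnecessary. For $\kappa\in i\mathbb{R}_{+}$ with $\kappa^{2}\in\varrho(L_{c,d})$, Step 1 already gives invertibility of $Q_{\kappa}$. Step 2 never used $\kappa^{2}\notin\mathbb{R}$, so it gives a right inverse of the injective operator $L_{c,d}-\kappa^{2}$, which is then its inverse.
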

\begin{proof}
Most of the results from \cite[Sec. 4]{MaPo Acoustic 26} apply our setting
after replacing the coefficients $\rho_{\ell}$ (there denoting the acoustic
densities of $\Omega_{\ell}$) with $d_{\ell}^{-1}$ (c.f. the definitions
(\ref{c_def}) with \cite[eq. (1.2)]{MaPo Acoustic 26}). In particular, from
\cite[eqs. (4.10) and (4.11) ]{MaPo Acoustic 26} there follows that
(\ref{Q_Krein}) is invertible if and only if $Q_{\kappa}^{-1}(c,d)$ exists and
in such a case one has (c.f. with \cite[proof of Theorem 4.6]{MaPo Acoustic
26})\emph{ }%
\begin{align*}
&G_{\kappa}(1+(B_{1}(cd)\oplus B_{2}(d\,))\tau G_{\kappa})^{-1}(B_{1}(cd)\oplus
B_{2}(d\,))G_{-\bar{\kappa}}^{\ast}\\
=&%
\begin{bmatrix}
R_{\kappa}1_{\Omega}^{\ast} & S\!L_{\kappa}%
\end{bmatrix}
Q_{\kappa}^{-1}(c,d)%
\begin{bmatrix}
(cd-1)1_{\Omega}^{\ast}\Delta_{\Omega}^{\mathrm{max}}1_{\Omega}R_{\kappa}\\
\mu(  d)  \,\gamma_{1}R_{\kappa}%
\end{bmatrix}
.
\end{align*}
Then, by \cite[Theorem 4.2]{MaPo Acoustic 26}, if $Q_{\kappa}^{-1}(c,d)$ for
some $\kappa$ then there exists a closed operator $\hat{L}$ with resolvent
given by the right hand side of \eqref{Krein_EM}; such a resolvent exists (by
the same arguments as in \cite[Theorem 2.19 and Remark 2.20]{CFP}) for any
$\kappa\in{\mathbb{C}}_{+}$ such that $\kappa^{2}\in{\varrho}(\hat{L}%
)\cap\mathbb{C}\backslash\lbrack0,+\infty)$. Since, by Theorem
\ref{Theorem_Q_full}, $Q_{\kappa}^{-1}(c,d)$ exists for $\kappa\in\mathbb{C}$
in an open set, $\widehat{L}$ exists, his resolvent is provided by the r.h.s.
of (\ref{Krein_EM}) and, by \cite[Theorem 4.3]{MaPo Acoustic 26}, it coincides
with $L_{c,d}$. Furhtermore, since $L_{c,d}$ is symmetric, $\pm i\in{\varrho
}(L_{c,d})$ and so $\mathrm{ran}(L_{c,d}\pm i)=\mathrm{dom}((L_{c,d}\pm
i)^{-1})=L^{2}(\mathbb{R}^{3},c^{-1})$. In conclusion, $L(c,d)$ is self-adjoint.
\end{proof}

\begin{remark}
By
\[
\left(  (-\Delta_{\Omega_{\mathrm{ex}}}^{\mathrm{max}%
})\oplus(-cd\Delta_{\Omega}^{\mathrm{max}})-\kappa^{2}\right)  (L_{c,d}%
-\kappa^{2})^{-1}=1
\]
and by%

\[
\left(  (-\Delta_{\Omega_{\mathrm{ex}}}^{\mathrm{max}%
})\oplus(-cd\Delta_{\Omega}^{\mathrm{max}})-\kappa^{2}\right)  R_{\kappa
}u=\left(  \chi_{\Omega_{\mathrm{ex}}}+cd\chi_{\Omega
}\right)  u-(1-cd)\kappa^{2}\chi_{\Omega}R_{\kappa}u\,,
\]
there follows
\begin{align}
&  \left(  (-\Delta_{\Omega_{\mathrm{ex}}}^{\mathrm{max}%
})\oplus(-cd\Delta_{\Omega}^{\mathrm{max}})-\kappa^{2}\right)
\begin{bmatrix}
R_{\kappa}1_{\Omega}^{\ast} & S\!L_{\kappa}%
\end{bmatrix}
Q_{\kappa}^{-1}(  c,d)
\begin{bmatrix}
(cd-1)\Delta_{\Omega}^{\mathrm{max}}1_{\Omega}R_{\kappa}u\\
\mu(  d)  \,\gamma_{1}R_{\kappa}u
\end{bmatrix}
\nonumber\\
=  &  u-\left(  \chi_{\Omega_{\mathrm{ex}}}%
+cd\chi_{\Omega}\right)  u-\left(  1-cd\right)  \kappa^{2}\chi_{\Omega
}R_{\kappa}u=\left(  1-cd\right)  \chi_{\Omega}\left(  1-\kappa^{2}R_{\kappa
}\right)  u\,. \label{Krein_diff_id}%
\end{align}

\end{remark}


We conclude this Section by providing a Limiting Absorption Principe (LAP for short) for $R_{\kappa}(c,d)$. 
At first, let us recall LAP for the free resolvent: 
the limits%
\[
\mathcal R_{\lambda}^{\pm}:=\lim_{\delta\rightarrow0^{+}}(-\Delta-(\lambda\pm i\delta))^{-1}\,,\qquad \lambda\in\mathbb R\,,
\]
exist in ${\mathscr B}(  H_{\alpha }^{0}(  \mathbb{R}^{3})  ,H_{-\alpha 
}^{2}(  \mathbb{R}^{3}) )$ 
for $\alpha >3/2$ and one has%
\[
(  -\Delta+\lambda)  \mathcal R_{\lambda}^{\pm}u=u\,,\qquad u\in H_{\alpha }%
^{0}(  \mathbb{R}^{3})  \,.
\]
By the obvious relation
$$
(-\Delta-\kappa^{2})^{-1}=
\big(-\Delta-\text{Re}(\kappa)^{2}+\text{Im}(\kappa)^{2}-i2\text{Re}(\kappa)\text{Im}(\kappa)\big)^{-1}\,,\qquad \kappa\in\mathbb C_{+}\,,
$$
one gets that the  limit 
$$
R_{\kappa}:=\lim_{\delta\rightarrow0^{+}}R_{\kappa+i\delta}\,,\qquad \kappa\in\mathbb R\,,
$$
exists in ${\mathscr B}(  H_{\alpha }^{0}(  \mathbb{R}^{3})  ,H_{-\alpha 
}^{2}(  \mathbb{R}^{3}) )$ 
for $\alpha >3/2$ and one has%
\[
(  -\Delta-\kappa^{2})   R_{\kappa}u=u\,,\qquad u\in H_{\alpha }%
^{0}(  \mathbb{R}^{3})  \,,\qquad\kappa\in\mathbb{R}\,.
\]
By duality and interpolation, the  mapping properties of $R_{\kappa}$ extend according
to%
\begin{equation}
R_{\kappa}\in{\mathscr B}(  H_{\alpha }^{s-2}(  \mathbb{R}^{3})
,H_{-\alpha }^{s}(  \mathbb{R}^{3})  )  \,,\qquad\lambda
\in\mathbb{R}\,,\ \alpha >3/2\,,\ s\in\left[  0,2\right]  \,. \label{LAP}%
\end{equation}
Moreover, the layer potentials inherit these limit properties; in particular, one has%
\begin{equation}
S\!L_{\kappa}\in{\mathscr B}(  H^{s-2}(  \Gamma)  ,H_{-\alpha 
}^{s-1/2}(  \mathbb{R}^{3})  )  \,,\qquad\kappa\in
\mathbb{R}\,,\ \alpha >3/2\,,\ s\in\left[  0,2\right)  \,, \label{SL_LAP}%
\end{equation}
see e.g. in \cite[Sec. 5.5.1]{MaPo JMPA 19}.

\begin{lemma}
\label{Lemma_LAP} The limit%
\begin{equation}
R_{\kappa}(  c,d)  :=\lim_{\delta\rightarrow0^{+}%
}R_{\kappa+i\delta}(  c,d)  \,, \qquad \kappa\in\mathbb R\backslash \{\kappa\in\mathbb R:\ker(Q_{\kappa}(c,d))\not=\{0\}\}\,,\label{LAP_L_cd}%
\end{equation}
exists in ${\mathscr B}(  H_{\alpha }%
^{0}(  \mathbb{R}^{3})  ,H_{-\alpha }^{1}(  \mathbb{R}^{3}))$  for $\alpha >3/2$. The resolvent formula (\ref{Krein_EM}) holds in
the limit and, for any  $u\in H_{\alpha }^{0}(  \mathbb{R}^{3})$ one has%
\begin{equation}
\big(  (  -\Delta_{\Omega_{\+}})  \oplus(
-cd\Delta_{\Omega})  -\kappa^{2}\big)  R_{\kappa}(  c,d)
u=u\,,\qquad   
\kappa\in\mathbb R\backslash \{\kappa\in\mathbb R:\ker(Q_{\kappa}(c,d))\not=\{0\}\}
\,. \label{LAP_L_cd_id}%
\end{equation}

\end{lemma}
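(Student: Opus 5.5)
The plan is to pass to the limit $\delta\to0^{+}$ term by term in the resolvent formula \eqref{Krein_EM} at $\kappa+i\delta$, with $\kappa\in\mathbb R$ such that $\ker Q_{\kappa}(c,d)=\{0\}$.

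\emph{Free part and outer factors.} By \eqref{LAP}, $R_{\kappa+i\delta}\to R_{\kappa}$ in $\mathscr B(H^{0}_{\alpha}(\mathbb R^{3}),H^{2}_{-\alpha}(\mathbb R^{3}))$. The right factor $\big[(cd-1)\Delta^{\max}_{\Omega}1_{\Omega}R_{\kappa+i\delta},\ \mu(d)\gamma_{1}R_{\kappa+i\delta}\big]^{T}$ involves only a restriction to the bounded set $\Omega$ and traces on $\Gamma$. On bounded sets the weight $\langle x\rangle^{-\alpha}$ is harmless, so this factor converges in $\mathscr B(H^{0}_{\alpha}(\mathbb R^{3}),L^{2}(\Omega)\oplus H^{1/2}(\Gamma))$, and hence in $\mathscr B(H^{0}_{\alpha},L^{2}(\Omega)\oplus H^{-1/2}(\Gamma))$. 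Similarly, in the left factor $[R_{\kappa+i\delta}1^{*}_{\Omega}\ \ S\!L_{\kappa+i\delta}]$ the term $R_{\kappa+i\delta}1^{*}_{\Omega}$ converges into $H^{2}_{-\alpha}$ by \eqref{LAP}, since $L^{2}(\Omega)\hookrightarrow H^{0}_{\alpha}$. The term $S\!L_{\kappa+i\delta}$ converges in $\mathscr B(H^{-1/2}(\Gamma),H^{1}_{-\alpha}(\mathbb R^{3}))$ by \eqref{SL_LAP} with $s=3/2$. This is where the regularity drops from $H^{2}_{-\alpha}$ to $H^{1}_{-\alpha}$.

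\emph{The middle factor.} This is the main point. By Lemma \ref{Lemma_Q_Fredholm}, $\kappa\mapsto Q_{\kappa}(c,d)$ is analytic on all of $\mathbb C$ in $\mathscr B(L^{2}(\Omega)\oplus H^{-1/2}(\Gamma))$, so $Q_{\kappa+i\delta}(c,d)\to Q_{\kappa}(c,d)$ in norm. By \eqref{Q_cd_id}, $Q_{\kappa}(c,d)$ is an invertible diagonal operator times (identity plus compact); here $cd\neq0$ on each component by \eqref{d_sign_ass}. It is therefore Fredholm of index zero, and the assumption $\ker Q_{\kappa}(c,d)=\{0\}$ gives $Q_{\kappa}^{-1}(c,d)\in\mathscr B$. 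Inversion is continuous on the open set of invertible operators, so $Q^{-1}_{\kappa+i\delta}(c,d)\to Q^{-1}_{\kappa}(c,d)$; equivalently, the finitely meromorphic map of Theorem \ref{Theorem_Q_full} is analytic near such $\kappa$. Composing the three convergent factors gives existence of the limit \eqref{LAP_L_cd} in $\mathscr B(H^{0}_{\alpha},H^{1}_{-\alpha})$, and \eqref{Krein_EM} holds at real $\kappa$ with the limiting operators.

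\emph{The identity \eqref{LAP_L_cd_id}.} I would pass to the limit in the identity of the Remark, i.e. in \eqref{Krein_diff_id} combined with the action on $R_{\kappa}$, which holds for $\kappa+i\delta$ and $u\in L^{2}$. Both sides converge in distributions on $\mathbb R^{3}\backslash\Gamma$. The differential operator $(-\Delta_{\Omega_{\rm ex}})\oplus(-cd\Delta_{\Omega})-\kappa^{2}$ is continuous from $H^{1}_{-\alpha}$ into distributions on $\mathbb R^{3}\backslash\Gamma$. The right-hand side $(1-cd)\chi_{\Omega}(1-(\kappa+i\delta)^{2}R_{\kappa+i\delta})u$ converges by \eqref{LAP}. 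Adding the free term, for which $(-\Delta-\kappa^{2})R_{\kappa}u=u$, yields \eqref{LAP_L_cd_id} for $u\in H^{0}_{\alpha}$ in $L^{2}$, by density and boundedness of both sides.

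The main obstacle is the step controlling $Q^{-1}$. The subtlety is ensuring that the Fredholm structure, and hence boundedness of the inverse, holds at real $\kappa$ where only injectivity is assumed. This is handled by the compactness decomposition \eqref{Q_cd_id}.
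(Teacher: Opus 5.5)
Your proposal is correct and takes essentially the same route as the paper. You let $\delta\to0^+$ factor by factor in \eqref{Krein_EM} using \eqref{LAP}--\eqref{SL_LAP}, excluding the real $\kappa$ with $\ker Q_\kappa(c,d)\neq\{0\}$, and obtain \eqref{LAP_L_cd_id} as the limit of \eqref{Krein_diff_id}; you simply make explicit the details the paper leaves implicit, namely the Fredholm structure from \eqref{Q_cd_id} giving $Q^{-1}_{\kappa+i\delta}(c,d)\to Q^{-1}_{\kappa}(c,d)$, and the loss from $H^{2}_{-\alpha}$ to $H^{1}_{-\alpha}$ due to the single-layer term.
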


\begin{proof}
By \eqref{Krein_EM}), for any $\kappa\in\mathbb R$ and $\delta>0$, there holds
\[
R_{\kappa+i\delta}(  c,d)  =R_{\kappa+i\delta }%
+\begin{bmatrix}R_{\kappa+i\delta}1_{\Omega}^{*}&S\!L_{\kappa+i\delta}\end{bmatrix}
Q_{\kappa+i\delta}^{-1}(  c,d)
\begin{bmatrix}(  cd-1)1_{\Omega
}^{\ast}\Delta_{\Omega}^{\mathrm{max}}1_{\Omega}R_{\kappa+i\delta}
\\
\tilde{d}\,\gamma_{1} R_{\kappa+i\delta}
\end{bmatrix}.
\]
By (\ref{LAP})-(\ref{SL_LAP}), the r.h.s. of this equation has limit in
${\mathscr B}\left(  H_{\alpha }^{0}(  \mathbb{R}^{3})  ,H_{-\alpha }%
^{1}(  \mathbb{R}^{3})  \right)  $ with the possible exception of
the real poles of $\kappa\mapsto  Q_{\kappa}^{-1}(  c,d)  $, i.e., for the real $\kappa$'s such that $\ker(Q_{\kappa}(c,d))\not=\{0\}$. Hence, the limit  in
(\ref{LAP_L_cd}) is  defined through (\ref{Krein_EM}) and the relation
(\ref{LAP_L_cd_id}) follows by taking the limit in 
(\ref{Krein_diff_id}).
\end{proof}

\section{Spectral analysis}

\begin{theorem}
\label{theorem_spectrum}$\left.  {}\right.  \medskip$\newline$i)$ $\sigma
_{p}(L_{c,d})\cap\left(  0,+\infty\right)  =\varnothing\,$.\medskip
\newline$ii)$ $\sigma_{ess}(L_{c,d})=[0,+\infty)$ and $\sigma_{disc}(L_{c,d})$
accumulates at $-\infty\,$.\medskip\newline$iii)$ $\sigma_{sc}(L_{c,d}%
)=\varnothing$ and $\sigma_{ac}(L_{c,d})=\left[  0,+\infty\right)  \,$.
\end{theorem}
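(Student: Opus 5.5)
For $i)$ I will argue by unique continuation combined with Rellich's theorem. Suppose $u\in\mathcal D_{d}$ satisfies $L_{c,d}u=\lambda u$ with $\lambda=\kappa^{2}>0$. Then $u|\Omega_{\+}$ is an $L^{2}$ solution of $(-\Delta-\kappa^{2})u=0$ outside a ball $B_{r}\supset\overline\Omega$. Rellich's estimate (as in the proof of Theorem \ref{Theorem_inverse}, \cite[Corollary 4.8]{Leis}) gives $u=0$ outside $B_{r}$, and unique continuation then gives $u|\Omega_{\+}=0$. The transmission conditions \eqref{transmission_bc}, i.e. $[\gamma_0]u=0$ and $\gamma_{1,\ell}^{\+}u=d_\ell\gamma_{1,\ell}^{\-}u$ with $d_\ell\neq0$, yield $\gamma_{0}^{\-}u=\gamma_{1}^{\-}u=0$. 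In each $\Omega_\ell$ the function $u$ solves $(-\Delta-\kappa^{2}/(c_\ell d_\ell))u=0$ with zero Cauchy data, so $u|\Omega_\ell=0$ by unique continuation for the Cauchy problem. The sign of $c_\ell d_\ell$ is irrelevant here, since the equation is still a Helmholtz or modified Helmholtz equation.

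For $ii)$ I plan to compare resolvents. By Theorem \ref{Theorem_Krein_EM}, $R_\kappa(c,d)-R_\kappa$ equals $[R_\kappa 1_\Omega^*\ \ S\!L_\kappa]\,Q_\kappa^{-1}\,[\dots]$. The right factor $[(cd-1)\Delta^{\max}_\Omega1_\Omega R_\kappa;\ \mu(d)\gamma_1R_\kappa]$ is bounded from $L^2(\RE^3)$ into $L^2(\Omega)\oplus H^{1/2}(\Gamma)$. The component into $L^2(\Omega)$ is not compact, but the left factor $R_\kappa1_\Omega^*$, composed with restriction to the compact set, is compact. I will write $R_\kappa 1_\Omega^* = R_\kappa M_\psi 1_\Omega^*$; then $R_\kappa M_\psi$ maps $L^2$ into $H^2_{loc}$, but it is not compact on $\RE^3$. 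To get compactness I instead use that $\Delta^{\max}_\Omega 1_\Omega R_\kappa = 1_\Omega(-1-\kappa^2R_\kappa)$, so the right factor is $1_\Omega$ times bounded, and $R_\kappa 1_\Omega^* 1_\Omega$ is compact on $L^2(\RE^3)$ because $M_\chi R_\kappa$ is compact for $\mathrm{Im}\,\kappa>0$ (Rellich–Kondrachov with decay of $\chi_\Omega$). Likewise $S\!L_\kappa\,\mu\gamma_1R_\kappa$ factors through $H^{1/2}(\Gamma)\hookrightarrow H^{-1/2}(\Gamma)$, which is compact. Hence the resolvent difference is compact, Weyl's theorem gives $\sigma_{ess}(L_{c,d})=\sigma_{ess}(-\Delta)=[0,\infty)$, and the weights $c^{-1}$ only change the space to an equivalent norm. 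For accumulation at $-\infty$ I will use test functions concentrated inside some $\Omega_{\ell_\circ}$ with $d_{\ell_\circ}<0$. For $u\in C_0^\infty(\Omega_{\ell_\circ})$ one has $\langle u,L_{c,d}u\rangle_{c^{-1}}=d_{\ell_\circ}\|\nabla u\|^2$, which can be made arbitrarily negative on subspaces of arbitrary dimension (Dirichlet eigenfunctions in a small ball). By the min-max/spectral theorem the spectral projection on $(-\infty,-N)$ is then nonzero for every $N$. Since the spectrum below $0$ is discrete, $\sigma_{disc}$ is unbounded below.

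For $iii)$ I use the LAP in Lemma \ref{Lemma_LAP}. The real $\kappa$ with $\ker Q_\kappa(c,d)\neq\{0\}$ form a discrete set, since $Q^{-1}$ is finitely meromorphic by Theorem \ref{Theorem_Q_full}. On compact intervals of $(0,\infty)$ avoiding these points, the boundary values of $\langle f,R_{\kappa+i\delta}(c,d)f\rangle$ are uniformly bounded for $f\in H^0_\alpha$, which is dense. The standard criterion (e.g. Reed–Simon XIII) then gives $\sigma_{sc}=\varnothing$ there and absolute continuity. At exceptional positive points one must show they carry no singular spectrum: a pole of $Q_\kappa^{-1}$ at real $\kappa\ne0$ would either produce an embedded eigenvalue, which $i)$ excludes, or be removable in the resolvent. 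I expect this step to be the main obstacle. I will try to show that a kernel element of $Q_\kappa$ at real $\kappa\neq0$ produces an outgoing solution, as in the proof of Theorem \ref{Theorem_Q_full}. An energy/flux identity based on the symmetry computation \eqref{int_part} then forces the far-field to vanish, so Rellich's theorem applies and the kernel is trivial. The only remaining points $\{0\}$ and the discrete negative eigenvalues form a countable set, so $\sigma_{ac}=[0,\infty)$ follows from $\sigma_{ess}=[0,\infty)$ together with the absence of singular continuous and positive point spectrum.
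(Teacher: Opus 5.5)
Your route is the paper's throughout: Rellich plus unique continuation for $i)$, Weyl's theorem via a compact resolvent difference for $ii)$, and the limiting absorption principle off the discrete set of real characteristic points for $iii)$. However, the compactness bookkeeping in $ii)$ does not close.

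\textbf{Compactness in $ii)$.} There are two problems.
\begin{enumerate}
\item Your remark that $R_\kappa M_\psi$ is not compact on $L^2(\RE^3)$ is false. For $\kappa\in\mathbb C_+$ it is the adjoint of $M_{\bar\psi}R_{-\bar\kappa}$, which is compact by Rellich--Kondrachov, and you in fact use this two lines later.
\item Once you expand $[R_\kappa1_\Omega^*\ \ S\!L_\kappa]\,Q_\kappa^{-1}(c,d)\,[\cdots]$, the cross term $S\!L_\kappa\,[Q_\kappa^{-1}(c,d)]_{21}\,(cd-1)\Delta_\Omega^{\mathrm{max}}1_\Omega R_\kappa$ is not covered. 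Compactness of $R_\kappa1_\Omega^*$ does not reach it, and neither does compactness of $\gamma_1R_\kappa$ into $H^{-1/2}(\Gamma)$, since $(cd-1)\Delta_\Omega^{\mathrm{max}}1_\Omega R_\kappa=(cd-1)1_\Omega(-1-\kappa^2R_\kappa)$ is not compact. Because $Q_\kappa^{-1}$ mixes the two channels, you cannot pair $S\!L_\kappa$ only with $\gamma_1R_\kappa$.
\end{enumerate}
The fix is the paper's: show the whole left factor is compact, so the right factor needs no analysis. Here $R_\kappa1_\Omega^*$ is the adjoint of the compact map $1_\Omega R_{-\bar\kappa}$. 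Also $S\!L_\kappa\in\mathscr B(H^{-3/2}(\Gamma),L^2(\RE^3))$, so composing with the compact embedding $H^{-1/2}(\Gamma)\hookrightarrow H^{-3/2}(\Gamma)$ makes it compact on $H^{-1/2}(\Gamma)$. Alternatively, \eqref{Q_cd_id} shows that $Q_\kappa^{-1}(c,d)-\mathrm{diag}((cd)^{-1},1)$ is compact, hence so are its off-diagonal entries. Your test-function argument in $\mathcal C_0^\infty(\Omega_{\ell_\circ})\subset\mathcal D_d$ for the accumulation at $-\infty$ is correct, and it supplies the justification the paper leaves implicit.

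\textbf{The exceptional points in $iii)$.} The step you call the main obstacle is not needed. The set $E^2\cup\{0\}$ is countable. For every $f$, the singular continuous part of its spectral measure vanishes:
\begin{itemize}
\item on the open intervals where the LAP gives absolute continuity;
\item on $(-\infty,0)$, where the spectrum is discrete;
\item on $E^2\cup\{0\}$, because a continuous measure has no atoms.
\end{itemize}
Hence $\sigma_{sc}(L_{c,d})=\varnothing$ with no information about the exceptional points, and $i)$ already excludes eigenvalues there. This is exactly how the paper concludes, and it is the same reasoning you apply to $\{0\}$ at the end.

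Your flux argument does work. With real $c$, $d$, $\kappa$, the Green identity on $B_r$ gives $\mathrm{Im}\int_{\partial B_r}\bar\psi\,\partial_r\psi=0$ for the outgoing $\psi=R_\kappa1_\Omega^*u+S\!L_\kappa\varphi$. Then $\psi=0$ yields $u=0$ from the first row of $Q_\kappa(c,d)$, and $\varphi=-[\gamma_1]\psi=0$. It proves the stronger statement that there are no real nonzero characteristic points, which the paper records only afterwards in Remark \ref{rem-res}. But it is extra work the theorem does not require.
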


\begin{proof}
$i)$ Let $\lambda>0$ and $u_{\lambda}\in\mathcal{D}_{d}$ be such that
$L_{c,d}u_{\lambda}=\lambda u_{\lambda}$. Since $L_{c,d}\subset(-\Delta
_{\Omega_{\mathrm{ex}}}^{\mathrm{max}})\oplus
(-cd\Delta_{\Omega}^{\mathrm{max}})$, it results
\[
(\Delta+\lambda)u_{\lambda}|\Omega_{\mathrm{ex}}=0\,.
\]
By Rellich's estimate (see, e.g., \cite[Corollary 4.8]{Leis}): $\left.
u_{\lambda}\right\vert B^{c}=0$ for any open ball $B\supset\overline{\Omega}$.
Hence, by the unique continuation property (see, e.g., \cite[Section
4.3]{Leis}), $\left.  u_{\lambda}\right\vert \mathbb{R}^{3}\backslash
\overline{\Omega}=0$ and so $\gamma_{0}^{+}u_{\lambda}=\gamma_{1}%
^{+}u_{\lambda}=0$. Hence, the boundary conditions in $\mathcal{D}_{d}$ imply%
\[
\left\{
\begin{array}
[c]{l}%
(cd\,\Delta+\lambda)u_{\lambda}|\Omega=0\,,\\
\gamma_{0}^{\mathrm{in}}=\gamma_{1}^{\mathrm{in}}u=0\,,
\end{array}
\right.
\]
which admits only the trivial solution. Therefore, $u_{\lambda}=0$.

$ii)$ Since $S\!L_{\kappa}\in\mathscr{B}(H^{-3/2}(\Gamma),L^{2}(\mathbb{R}%
^{3}))$ and $H^{-1/2}(\Gamma)\hookrightarrow H^{-3/2}(\Gamma)$ with compact
embedding, one has $S\!L_{\kappa}\in\mathfrak{S}_{\infty}(H^{-1/2}%
(\Gamma),L^{2}(\mathbb{R}^{3}))$. By $R_{\kappa}\in\mathscr{B}\left(
L^{2}(  \mathbb{R}^{3})  ,H^{2}(  \mathbb{R}^{3})
\right)  $ and by the compact embedding $H^{2}(\Omega)\hookrightarrow
L^{2}(\Omega)$, one has $1_{\Omega}R_{\kappa}\in\mathfrak{S}_{\infty}%
(L^{2}(\mathbb{R}^{3}),L^{2}(\Omega))$; then, by duality, $R_{\kappa}%
1_{\Omega}^{\ast}\in\mathfrak{S}_{\infty}(L^{2}(\Omega),L^{2}(\mathbb{R}%
^{3}))$. Hence, by \eqref{Krein_EM}, the resolvent difference $\left(
L_{c,d}-\kappa^{2}\right)  ^{-1}-R_{\kappa}$, is compact as well. Therefore,
the Weyl theorem applies (see, e.g., \cite[Th. XIII.14]{ReSi IV}) and
$\sigma_{ess}(L_{c,d})=\sigma_{ess}(-\Delta)=[0,+\infty)$. Since $L_{c,d}$ is
unbounded from below and $\sigma_{disc}(L_{c,d})\subset\mathbb{R}%
\backslash\sigma_{ess}(L_{c,d})=(-\infty,0)$, it accumulates at $-\infty$.

$iii)$ Since $E:=\{\kappa\in \mathbb R:
\ker(Q_{\kappa}(c,d))\not=\{0\}\}$ is discrete, $E^{2}:=\{\kappa^{2}:\kappa\in E\}\subset
[0,+\infty)$ is discrete as well and $[0,+\infty)\backslash E^{2}$ is the (almost countable) union of open intervals $I_{n}$. Now, we argue as in 
 \cite[Proof of Theorem 6.1]{Agmon}. Hence, we consider the map 
$I_{n}\ni\lambda\mapsto\left\langle f,P_{\lambda}
f\right\rangle $,, where  
$f\in L^{2}(\mathbb R^{3},c^{-1})_{pp}^{\bot}$,  $P_{\lambda
}    $ is the spectral resolution of the identity
associated with $L_{c,d}$ and $  
L^{2}(\mathbb R^{3}\!,c^{-1}) _{pp}$ denotes the pure point subspace of $L_{c,d}$.
Given the compact interval $[\lambda_{1},\lambda_{2}]\subset I_{n}$ and $f\in L^{2}_{\alpha}
(\mathbb{R}^{3})$, by Lemma \ref{Lemma_LAP}
and by Stone's formula one has
\begin{equation*}
\big\langle f,(P_{\lambda_{2}}-P_{\lambda_{1}})f\big\rangle_{L^{2}(\mathbb R^{3})} =%
\frac{1}{2\pi i}\int_{\lambda_{1}}^{\lambda_{2}}\big\langle f,(R_{\lambda^{1/2} }(c,d) -R_{-\lambda^{1/2} }(c,d)
)f\big\rangle_{\alpha,-\alpha}\, d\lambda \,,
\end{equation*}
where $\langle\cdot,\cdot\rangle_{\alpha,-\alpha}$ denotes the $L^{2}_{\alpha}
(\mathbb{R}^{3})$-$L^{2}_{-\alpha}
(\mathbb{R}^{3})$ duality.
Hence, $\lambda\mapsto\langle f,P_{\lambda} f\rangle_{L^{2}(\mathbb R^{3})} $ is continuously differentiable for any $f\in L^{2}_{\alpha}(  \mathbb{R}^{3})
$. By the same argument as in \cite[Proof of Theorem 6.1]{Agmon}, the property
extends to the whole $(    (  L^{2}(  \mathbb{R}^{3}))  ^{pp})    ^{\bot}$ and so $I_{n}\subseteq \sigma_{ac}(L_{c,d})$. This shows that $\sigma_{sc}(  L_{c,d}) \cap[0,+\infty)\subseteq E_{2}$, which implies the thesis, since a singular continuous measure can't have atoms.

Therefore
\[
\sigma_{disc}(L_{c,d})\cup\sigma_{ess}(L_{c,d})=\sigma(L_{c,d})=\overline
{\sigma_{p}(L_{c,d})}\cup\sigma_{ac}(L_{c,d})\cup\sigma_{sc}(L_{c,d}%
)=\overline{\sigma_{p}(L_{c,d})}\cup\sigma_{ac}(L_{c,d})
\]
and since $\sigma_{p}(L_{c,d})\backslash\sigma_{disc}(L_{c,d})\subseteq\{0\}$,
one has either $\sigma_{ac}(L_{c,d})=\sigma_{ess}(L_{c,d})$ or $\{0\}\cup
\sigma_{ac}(L_{c,d})=\sigma_{ess}(L_{c,d})$. Since $\sigma_{ac}(L_{c,d})$ is
closed, one gets $\sigma_{ac}(L_{c,d})=[0,+\infty)$.
\end{proof}
\par\smallskip\smallskip
Next we analyze the spectral nature of $\kappa=0$.

\begin{theorem}
\label{Theorem_zero_eigenvalue} If $d_{\ell}=d_{\circ}\in(-\infty,0)\backslash\{  -1\}$
 for any index $\ell$ and $\mu(  d_{\circ})^{-1}  \in\sigma(K_{0})$, then $0\in\sigma_{p}(L_{c,d})$.
\end{theorem}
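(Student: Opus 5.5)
Let $\varphi_\circ\in H^{-1/2}(\Gamma)$ be an eigenvector of $K_0$ with eigenvalue $\lambda_\circ=\mu(d_\circ)^{-1}$, so that $(1-\mu(d_\circ)K_0)\varphi_\circ=0$. The plan is to show directly that the single-layer potential $\psi:=S\!L_0\varphi_\circ$ lies in $\mathcal D_d$, solves $L_{c,d}\psi=0$, and belongs to $L^2(\RE^3)$. This mirrors the construction in the proof of Theorem \ref{Theorem_inverse}, now at $\kappa=0$ and without any limiting procedure.

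\textbf{Step 1: $\lambda_\circ\neq-1/2$, hence $\varphi_\circ$ has zero mean.} Since $d_\circ<0$ and $d_\circ\neq-1$, we have $\mu(d_\circ)\in(-\infty,-2)\cup(2,+\infty)$. Hence $\lambda_\circ\in(-1/2,1/2)\backslash\{0\}$, and in particular $\lambda_\circ\neq-1/2$. Then \eqref{zero_mean} gives $\langle 1,\varphi_\circ\rangle_{\frac12,-\frac12}=0$.

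\textbf{Step 2: the transmission conditions and the equation.} By the jump relations of the single layer at $\kappa=0$, $[\gamma_0]\psi=0$ and $[\gamma_1]\psi=-\varphi_\circ$. Moreover $\gamma_1\psi=K_0\varphi_\circ=\mu(d_\circ)^{-1}\varphi_\circ$. Together these give $[\gamma_1]\psi=-\mu(d_\circ)\gamma_1\psi$, which is exactly \eqref{transmission_bc}. Since $\Delta\psi=0$ in $\RE^3\backslash\Gamma$, the function $\psi$ solves $L_{c,d}\psi=0$ piecewise, as in \eqref{L_cd_def}. It is also nonzero, because $S\!L_0$ is injective ($S_0$ is an isomorphism).

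\textbf{Step 3: membership in $L^2$ and $H^1$ (the main obstacle).} Locally, $S\!L_0\varphi_\circ\in H^1_{loc}(\RE^3)$ and $\Delta\psi|_{\RE^3\backslash\Gamma}=0\in L^2$, so only the behaviour at infinity matters. A generic single layer decays like $|x|^{-1}$ and is not in $L^2(\RE^3)$. Here, however, expanding the kernel $\frac{1}{4\pi|x-y|}$ for $|x|\to\infty$ and $y\in\Gamma$ gives
\[
\psi(x)=\frac{\langle 1,\varphi_\circ\rangle_{\frac12,-\frac12}}{4\pi|x|}+O(|x|^{-2})\,.
\]
The leading term vanishes by Step 1, so $\psi=O(|x|^{-2})$ and $\nabla\psi=O(|x|^{-3})$. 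Both are square integrable near infinity in three dimensions. Hence $\psi\in H^1(\RE^3)\cap L^2_\Delta(\RE^3\backslash\Gamma)$, so $\psi\in\mathcal D_d$ and $L_{c,d}\psi=0$. This proves $0\in\sigma_p(L_{c,d})$.

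The delicate point is making the multipole expansion rigorous for a distribution $\varphi_\circ\in H^{-1/2}(\Gamma)$. For $|x|>2r$ with $\Gamma\subset B_r$, the function $y\mapsto\frac{1}{|x-y|}-\frac1{|x|}$ is smooth on a neighbourhood of $\Gamma$. Its $H^{1/2}(\Gamma)$-norm is bounded by $C|x|^{-2}$, and its gradient in $x$ by $C|x|^{-3}$. Pairing these bounds with $\varphi_\circ$ gives the stated estimates. If needed, regularity of $\varphi_\circ$ could be improved by elliptic regularity of the eigenvalue equation on a $\mathcal C^2$ boundary, but the duality bound above should suffice without it.
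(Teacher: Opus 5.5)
Your proposal is correct and follows essentially the same route as the paper: take $S\!L_0\varphi_\circ$, verify the transmission conditions via the single-layer jump relations, and obtain $H^1$ behaviour at infinity from the far-field expansion of the kernel, whose monopole term vanishes because $\varphi_\circ$ is $[\cdot,\cdot]_{-\frac12}$-orthogonal to $S_0^{-1}1$. Your version is slightly sharper and more explicit than the paper's: you get gradient decay $O(|x|^{-3})$ rather than $O(|x|^{-2})$, and you spell out why $\lambda_\circ\neq-1/2$ and why $S\!L_0\varphi_\circ\neq0$.
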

\begin{proof} For brevity, we use the notation $\lambda_{\circ}\equiv\mu(
d_{\circ}) ^{-1}\in\sigma(  K_{0}) $. Given $\varphi_{\circ}\in\ker(  \lambda_{\circ}- K_{0})\backslash\{0\}  $, let $u_{0}:=S\!L_{0}\varphi_{\circ}$. One has $\Delta_{\Omega}(u_{0}|\Omega)=0$, $\Delta_{\Omega_{\+}}(u_{0}|\Omega_{\+})=0$ and, by the jump properties of the single layers, 
$$
[  \gamma_{0}]  u_{0}=0\,,
\qquad
[  \gamma_{1}]  u_{0}=-\varphi_{\circ}=-\lambda_{\circ}^{-1}K_{0}\varphi_{\circ}=-\lambda_{\circ}^{-1}\gamma_{1}S\!L_{0}\varphi_{\circ}=-\lambda_{\circ}^{-1}u_{0}\,.
$$
Therefore, by (\ref{L_cd_def}) and (\ref{transmission_bc}),  if  $u_{0}\in H^{1}(\mathbb R^{3})$ then $u_{0}\in\dom(L_{c,d})$ and $L_{c,d}u_{0}=0$  (hence, $0\in\sigma_{p}(L_{c,d})$) .
\par
Since $\varphi_{\circ}\in H^{-1/2}(  \Gamma)  $,  one has
$\chi_{B_{r}}S\!L_{0}\varphi_{\circ}\in H^{1}(  \mathbb{R}^{3})$ for any $r>0$; to conclude we need
to prove that $\chi_{\mathbb{R}^{3}\backslash B_{r}}S\!L_{0}\varphi_{\circ}\in H^{1}(  \mathbb{R}^{3})  $ for $r$ large enough. In the following we take $r>0 $ so that $\Omega\subset\subset B_{r}$. Defining $\hat x:={x}/{|x|}$, one has
$$
\frac{1}{|x-y| }=
\frac{1}{| x|}\,\frac{1}{| \hat{x}-y/| x|\,| }=\frac{1}{| x|}\left(1+\int_{0}^{1/|x|}\frac{(\hat x-sy)\cdot y}{|\hat x-sy|^{3}}\,ds\right)=
\frac{1}{| x|}+\frac{h(x,y)}{| x|^{2}}\,,
$$
$$
h(x,y):=\int_{0}^{1}\frac{(\hat x-sy/|x|\,)\cdot y}{|\hat x-sy/|x|\,|^{3}}\,ds\,.
$$
Similarly, defining $\widehat{x-y  }:={(x-y)  }/{| x-y|}$, one has
\begin{align*}
\nabla_{\!x}\,\frac{1}{| x-y| }=&\frac{\widehat{x-y  }}{| x-y| ^{2}}=\frac{\widehat{
x-y  }}{|x| ^{2}}\,\frac{1}{| \hat x-y/|x|\, | ^{2}}=\frac{\widehat{
x-y  }}{|x| ^{2}}\left(1+2\int_{0}^{1/|x|}\frac{(\hat x-sy)\cdot y}{|\hat x-sy|^{4}}\,ds\right)\\
=&\left(
\frac{1}{|x|^{2}}+\frac{\tilde h(x,y)}{|x|^{3}}\right)\widehat{x-y  }\,,
\end{align*}
$$
\tilde h(x,y):=2\int_{0}^{1}\frac{(\hat x-sy/|x|\,)\cdot y}{|\hat x-sy/|x|\,|^{4}}\,ds\,.
$$
This entails, for any $\varphi$ sufficiently regular,
\begin{align*}
\left(  \chi_{\mathbb{R}^{3}\backslash B_{r}}S\!L_{0}\varphi\right)  (
x)  
= \frac{\chi_{\mathbb{R}^{3}\backslash B_{r}}(  x)  }{|x| }\int_{\Gamma}\varphi(  y)\,d\sigma_{\Gamma}(y)
+\frac{\chi_{\mathbb{R}^{3}\backslash B_{r}}(  x)  }{|x|^{2} }\int_{\Gamma}h(x,y)
\varphi(  y)\,d\sigma_{\Gamma}(y)
\end{align*}
and
\begin{align*}
\left(  \chi_{\mathbb{R}^{3}\backslash B_{r}}\nabla S\!L_{0}\varphi\right)  (
x)  =\frac{\chi_{\mathbb{R}^{3}\backslash B_{r}}(  x)  }{|x|^{2} }\int_{\Gamma}(\widehat{x-y  })\varphi(  y)\,d\sigma_{\Gamma}(y)
+\frac{\chi_{\mathbb{R}^{3}\backslash B_{r}}(  x)  }{|x|^{3} }\int_{\Gamma}\tilde h(x,y)(\widehat{x-y  })\varphi(  y)\,d\sigma_{\Gamma}(y)\,.
\end{align*}
Therefore (notice that $\widehat{x-(\cdot)}$, $h(x,\cdot)$ and $\tilde h(x,\cdot)$ belong to $\mathcal C^{1}(\Gamma)$ whenever $x$ is far away from $\Omega$), 
\begin{align*}
|\left(  \chi_{\mathbb{R}^{3}\backslash B_{r}}S\!L_{0}\varphi_{\circ}\right)  (
x)|^{2}\lesssim &\frac{\chi_{\mathbb{R}^{3}\backslash B_{r}}(  x)  }{|x|^{2} }\ \big|\langle1,\varphi_{\circ}\rangle_{\frac12,-\frac12}\big|^{2}+\frac{\chi_{\mathbb{R}^{3}\backslash B_{r}}(  x)  }{|x|^{4} }\, \sup_{|x|>r}\|h(x,\cdot)\|_{\frac12}\|\varphi_{\circ}\|_{-\frac12}
\\
\lesssim &\frac{\chi_{\mathbb{R}^{3}\backslash B_{r}}(  x)  }{|x|^{2} }\ \big|\langle1,\varphi_{\circ}\rangle_{\frac12,-\frac12}\big|^{2}+\frac{\chi_{\mathbb{R}^{3}\backslash B_{r}}(  x)  }{|x|^{4} }\, \sup_{|x|>r}\|h(x,\cdot)\|_{1}\|\varphi_{\circ}\|_{-\frac12}
\end{align*}
and
\begin{align*}
&|\left(  \chi_{\mathbb{R}^{3}\backslash B_{r}}\nabla S\!L_{0}\varphi_{\circ}\right)  (
x)|^{2}\\
\lesssim &\frac{\chi_{\mathbb{R}^{3}\backslash B_{r}}(  x)  }{|x|^{4} }\ 
\, \sup_{|x|>r}\|\widehat{x-(\cdot)}\|_{\frac12}\|\varphi_{\circ}\|_{-\frac12}
+\frac{\chi_{\mathbb{R}^{3}\backslash B_{r}}(  x)  }{|x|^{6} }\,\sup_{|x|>r}\|\widehat{x-(\cdot)}\tilde h(x,\cdot)\|_{\frac12}\|\varphi_{\circ}\|_{-\frac12}
\\
\lesssim &\frac{\chi_{\mathbb{R}^{3}\backslash B_{r}}(  x)  }{|x|^{4} }\ 
\, \sup_{|x|>r}\|\widehat{x-(\cdot)}\|_{1}\|\varphi_{\circ}\|_{-\frac12}
+\frac{\chi_{\mathbb{R}^{3}\backslash B_{r}}(  x)  }{|x|^{6} }\,\sup_{|x|>r}\|\widehat{x-(\cdot)}\tilde h(x,\cdot)\|_{1}\|\varphi_{\circ}\|_{-\frac12}\,.
\end{align*}
Let us recall that 
\begin{equation}\label{due}
\langle 1,\varphi\rangle_{\frac12,-\frac12}=[S_{0}^{-1}1,\varphi]_{-\frac12}\,,
\end{equation}
where $[\cdot,\cdot]_{-\frac12}$ denotes the scalar product in $H^{-\frac12}(\Gamma)$ with respect to which $K_{0}$ is self-adjoint and  that $S_{0}^{-1}1$ is the eigenvector of $K_{0}$ corresponding to the eigenvalue $-\frac12$.  
Since $d_{\circ}\not=-1$,  for the eigenvalue $\lambda_{\circ}$ there holds $\lambda_{\circ}=-\frac12 $ and so the eigenfunction $\varphi_{\circ}$ is always orthogonal to $S_{0}^{-1}1$. Moreover, 
$$
\sup_{| x|>r}\|h(x,\cdot)\|_{H^{1}(\Gamma)}<+\infty\,,\quad \sup_{|x|>r}\|\widehat{x-(\cdot)}\|_{H^{1}(\Gamma)}<+\infty\,,\quad \sup_{|x|>r}\|\widehat{x-(\cdot)}\tilde h(x,\cdot)\|_{H^{1}(\Gamma)}<+\infty\,.
$$
Therefore,
$$
\chi_{\mathbb{R}^{3}\backslash B_{r}}(x)\left(|\left(  S\!L_{0}\varphi_{\circ}\right)  (
x)|^{2}+|\left(  \nabla S\!L_{0}\varphi_{\circ}\right)  (
x)|^{2}\right)\lesssim \frac{\chi_{\mathbb{R}^{3}\backslash B_{r}}(  x)  }{|x|^{4} }
$$
and the proof is done.
\end{proof}
\section{Resonances}
By (\ref{L_cd_def}), $L_{c,d}$ coincides with $-\Delta$ when restricted to
functions with support outside a compact set containing $\Omega$; hence, the
study of the resonances of $L_{c,d}$ fits the blackbox approach \cite{DyaZwo}. This entails the following definition of the set of resonances of $L_{c,d}$, denoted by $\mathrm{Res}(L_{c,d})$: 
\begin{definition}
$\kappa^{2}_{\circ}\in\mathrm{Res}(L_{c,d})$ whenever $\kappa_{\circ}\in\mathbb{C}_{-}$ and $\kappa\mapsto1_{U_{2}%
}R_{\kappa}(  c,d)  1_{U_{1}}^{\ast}$ has a pole in $\kappa_{\circ}$ for some bounded open sets $U_{1}$ and $U_{2}$ containing
$\Omega$.
\end{definition}

\begin{lemma}
Let $U_{1}$ and $U_{2}$ be two bounded open sets containing $\Omega$. Then, 
\begin{equation}
\mathbb C\ni\kappa\mapsto 1_{U_{2}}R_{\kappa}(c,d)1_{U_{1}}^{\ast}=1_{U_{2}}R_{\kappa}1_{U_{1}}^{\ast}+%
\begin{bmatrix}
1_{U_{2}}R_{\kappa}1_{\Omega}^{\ast} & 1_{U_{2}}S\!L_{\kappa}%
\end{bmatrix}
Q_{\kappa}^{-1}(  c,d)
\begin{bmatrix}
(cd-1)\Delta_{\Omega}^{\mathrm{max}}1_{\Omega}R_{\kappa}1_{U_{1}}^{\ast}\\
\mu(  d)  \,\gamma_{1}R_{\kappa}1_{U_{1}}^{\ast}%
\end{bmatrix}
\,. \label{Krein_EM_cut}%
\end{equation}
is a $\mathscr{B}(L^{2}(U_{1}),H^{1}(U_{2}%
))$-valued meromorphic map.
\end{lemma}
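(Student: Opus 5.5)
The plan is to check, term by term, that every factor on the right-hand side of (\ref{Krein_EM_cut}) extends analytically to all of $\kappa\in\mathbb{C}$ with the appropriate mapping properties. The only non-analytic factor is $Q_{\kappa}^{-1}(c,d)$, which is finitely meromorphic on $\mathbb{C}$ by Theorem \ref{Theorem_Q_full}. A product of analytic operator-valued maps with a meromorphic one is meromorphic. Since the formula coincides with $1_{U_2}R_\kappa(c,d)1_{U_1}^*$ on the open set of $\kappa\in\mathbb{C}_+$ with $\kappa^2\in\varrho(L_{c,d})$ (Theorem \ref{Theorem_Krein_EM}), the right-hand side is then the meromorphic continuation of the cut-off resolvent.

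First fix $r>0$ with $\overline{U_1}\cup\overline{U_2}\cup\overline{\Omega}\subset B_r$, and choose $\psi\in\mathcal{C}_c^\infty(\mathbb{R}^3)$ with $\psi|B_r=1$. Every cut-off free operator then factors through $M_\psi R_\kappa M_\psi$:
\[
1_{U_2}R_\kappa 1_{U_1}^*=1_{U_2}M_\psi R_\kappa M_\psi 1_{U_1}^*,\qquad 1_{U_2}R_\kappa 1_\Omega^*=1_{U_2}M_\psi R_\kappa M_\psi 1_\Omega^*,\qquad 1_{U_2}S\!L_\kappa=1_{U_2}M_\psi R_\kappa M_\psi\gamma_0^* .
\]
By Lemma \ref{Lemma_R_k_analytic} with $s=2$ and $s=1$, the first two are entire in $\mathscr{B}(L^2(U_1),H^2(U_2))$ and $\mathscr{B}(L^2(\Omega),H^2(U_2))$ respectively. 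The third is entire in $\mathscr{B}(H^{-1/2}(\Gamma),H^1(U_2))$, using $\gamma_0^*\in\mathscr{B}(H^{-1/2}(\Gamma),H^{-1}(\mathbb{R}^3))$ exactly as in Lemma \ref{Lemma_Q_Fredholm}. Restricting $H^2$ to $H^1$ is harmless.

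Next I treat the right column. The map $1_\Omega R_\kappa 1_{U_1}^*=1_\Omega M_\psi R_\kappa M_\psi 1_{U_1}^*$ is entire with values in $\mathscr{B}(L^2(U_1),H^2(\Omega))$. Since $\Delta_\Omega^{\max}$ is bounded from $H^2(\Omega)$ to $L^2(\Omega)$, the entry $(cd-1)\Delta_\Omega^{\max}1_\Omega R_\kappa 1_{U_1}^*$ is entire in $\mathscr{B}(L^2(U_1),L^2(\Omega))$. (One can also write it explicitly as $(cd-1)(-1-\kappa^2 1_\Omega R_\kappa 1_{U_1}^*)$ restricted appropriately, since $(-\Delta-\kappa^2)R_\kappa=1$ holds distributionally for all $\kappa$ on the continued kernel.) Likewise, $\gamma_1 R_\kappa 1_{U_1}^*$ equals $\gamma_1$ applied to $M_\psi R_\kappa M_\psi 1_{U_1}^*\in H^2$ near $\Gamma$, so it is entire in $\mathscr{B}(L^2(U_1),H^{1/2}(\Gamma))\hookrightarrow\mathscr{B}(L^2(U_1),H^{-1/2}(\Gamma))$. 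Multiplication by the piecewise constant $\mu(d)$ is bounded.

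Finally, I compose: row (entire, into $H^1(U_2)$) $\times$ $Q_\kappa^{-1}$ (finitely meromorphic in $\mathscr{B}(L^2(\Omega)\oplus H^{-1/2}(\Gamma))$) $\times$ column (entire), plus an entire term. This is meromorphic in $\mathscr{B}(L^2(U_1),H^1(U_2))$, because near each pole the Laurent expansion of $Q^{-1}_\kappa$ has finitely many singular terms and the products of Taylor series stay convergent. The step I expect to need the most care is the justification that the identity (\ref{Krein_EM_cut}) really defines the continuation of the cut-off resolvent and not merely some meromorphic function. Here I would use the identity theorem: both sides agree on an open subset of $\mathbb{C}_+$, and the continuation of each entire factor is the unique one (the analytic extension of $M_\psi R_\kappa M_\psi$ from $\mathbb{C}_+$ provided by Lemma \ref{Lemma_R_k_analytic}). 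A secondary point is that the cut-off at $U_1,U_2$ must be absorbed by $\psi$; this requires $U_1,U_2$ bounded, which is assumed.
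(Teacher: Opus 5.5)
Your proposal is correct and takes essentially the same route as the paper. You factor every cut-off free operator through $M_\psi R_\kappa M_\psi$, use Lemma~\ref{Lemma_R_k_analytic} together with $\gamma_0^\ast\in\mathscr{B}(H^{-1/2}(\Gamma),H^{-1}(\mathbb{R}^3))$ to make the outer factors entire, invoke Theorem~\ref{Theorem_Q_full} for the meromorphy of $Q_\kappa^{-1}(c,d)$, and then identify the product with the cut-off resolvent via \eqref{Krein_EM} and uniqueness of meromorphic continuation. One small labelling slip: both cut-off resolvent terms use $s=2$ in Lemma~\ref{Lemma_R_k_analytic}, and $s=1$ is needed only for the single-layer term $1_{U_2}M_\psi R_\kappa M_\psi\gamma_0^\ast$.
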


\begin{proof}
Let $\psi\in\mathcal{C}_{c}^{\infty}(\mathbb{R}^{3})$ such that $\psi
|U_{1}\cup U_{2}=1$. Then, by $S\!L_{k}=R_{k}\gamma_{0}^{\ast}$, by
$\gamma_{0}^{\ast}\in\mathscr{B}(H^{-1/2}(\Gamma),H^{-1}(\mathbb{R}^{3}))$,
supp$(\gamma_{0}^{\ast}\phi)=\Gamma$, and by Lemma \ref{Lemma_R_k_analytic},
\[
\mathbb{C}\ni\kappa\mapsto%
\begin{bmatrix}
1_{U_{2}}R_{\kappa}1_{\Omega}^{\ast} & 1_{U_{2}}S\!L_{\kappa}%
\end{bmatrix}
\]
is a $\mathscr{B}(H^{-1/2}(\Gamma),,H^{1}(U_{2}))$-valued analytic map.
Similarly,
\[
\mathbb{C}\ni\kappa\mapsto%
\begin{bmatrix}
(cd-1)\Delta_{\Omega}^{\mathrm{max}}1_{\Omega}R_{\kappa}1_{U_{1}}^{\ast}\\
\mu(  d)  \,\gamma_{1}R_{\kappa}1_{U_{1}}^{\ast}%
\end{bmatrix}
\]
is a $\mathscr{B}(L^{2}(U_{1}),H^{-1/2}(\Gamma))$-valued analytic map. Hence,
by Theorem \ref{Theorem_Q_full}, the r.h.s. of (\ref{Krein_EM_cut}) is a
$\mathscr{B}(L^{2}(U_{1}),H^{1}(U_{2}))$-valued meromorphic map. This coincides
with the l.h.s. of (\ref{Krein_EM_cut}) by (\ref{Krein_EM}) and by unique
meromorphic continuation principle.
\end{proof}
\begin{theorem}\label{teo-res}
$$
\mathrm{Res}(L_{c,d})=\{\kappa^{2}\in\mathbb C:\ker(  Q_{\kappa}(
c,d))  \neq\{  0\}\,,\ \kappa\in\mathbb C_{-}\}\,. 
$$
\end{theorem}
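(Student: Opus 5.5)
We prove the two inclusions separately, working with the cut-off resolvent formula \eqref{Krein_EM_cut}. Since $\kappa\mapsto Q_{\kappa}^{-1}(c,d)$ is finitely meromorphic on $\mathbb C$ (Theorem \ref{Theorem_Q_full}), its poles are exactly the points where $\ker Q_{\kappa}(c,d)\neq\{0\}$. Indeed, by Lemma \ref{Lemma_Q_Fredholm} and \eqref{Q_cd_id}, $Q_{\kappa}(c,d)$ is an invertible diagonal operator times $1+$compact. By the Fredholm alternative it is then invertible iff it is injective, and injectivity fails exactly at the poles.

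\emph{Inclusion $\subseteq$.} Let $\kappa_\circ\in\mathbb C_-$ be a pole of $1_{U_2}R_\kappa(c,d)1_{U_1}^*$. In \eqref{Krein_EM_cut}, the free term $1_{U_2}R_\kappa 1_{U_1}^*$ and the two outer factors are entire (Lemma \ref{Lemma_R_k_analytic}). Hence if $Q_\kappa(c,d)$ were invertible at $\kappa_\circ$, the right-hand side would be analytic there, which is a contradiction. So $\ker Q_{\kappa_\circ}(c,d)\neq\{0\}$.

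\emph{Inclusion $\supseteq$.} This is the main obstacle: we must exclude cancellation of the pole of $Q_\kappa^{-1}$ by the outer factors. Take $\kappa_\circ\in\mathbb C_-$ with $(u_\circ,\varphi_\circ)\in\ker Q_{\kappa_\circ}(c,d)\setminus\{0\}$. We build a resonant state, exactly as in the proof of Theorem \ref{Theorem_Q_full}:
\[
\psi:=R_{\kappa_\circ}1_\Omega^*u_\circ+S\!L_{\kappa_\circ}\varphi_\circ ,
\]
with $R_{\kappa_\circ}$ the continued (outgoing) free resolvent. The kernel equations give $\psi\neq0$, since $\mathrm{ran}(S\!L_\kappa)\cap H^2=\{0\}$ and $u_\circ,\varphi_\circ$ do not both vanish. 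They also give the transmission conditions \eqref{transmission_bc_global}, and $(-\Delta-\kappa_\circ^2)\psi=0$ in $\Omega_{\rm ex}$, $(-cd\Delta-\kappa_\circ^2)\psi=0$ in $\Omega$, by the same computation as there. Moreover $\psi$ is $\kappa_\circ$-outgoing, since outside $\Omega$ it is a combination of outgoing potentials.

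Now suppose the cut-off resolvent were analytic at $\kappa_\circ$. Choose $\chi\in\mathcal C_c^\infty$ with $\chi=1$ near $\overline\Omega$, and set $g:=(L_{c,d}-\kappa_\circ^2)(\chi\psi)=-[\Delta,\chi]\psi$. This is compactly supported, with support away from $\Omega$. For $\operatorname{Im}\kappa>0$ the identity $R_\kappa(c,d)(L_{c,d}-\kappa^2)=1$ holds on compactly supported domain elements. Combining it with the free outgoing representation of $(1-\chi)\psi$ (a free outgoing solution, for which $-\Delta$-resolvent identities hold in the continued sense), we continue analytically to obtain
\[
\psi=R_{\kappa_\circ}(c,d)\,\tilde g
\]
for a suitable compactly supported $\tilde g$. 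The pairing trick then shows that the residue of $1_{U_2}R_\kappa(c,d)1_{U_1}^*$ at $\kappa_\circ$ is nonzero. Alternatively, and more simply, use the standard blackbox fact (as in \cite{DyaZwo}): the existence of a nonzero outgoing solution of $(L_{c,d}-\kappa_\circ^2)\psi=0$ forces a pole of the continued cut-off resolvent. Its proof is the following. If the continuation were holomorphic at $\kappa_\circ$, then $\psi=\chi\psi+(1-\chi)\psi$ would satisfy $\chi\psi=-R_{\kappa_\circ}(c,d)[\Delta,\chi]\psi$ after subtracting the free part. Iterating this through the identity $(1-\chi)\psi=R_{\kappa_\circ}[\Delta,\chi]\psi$ gives $\psi=0$, which is a contradiction.

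We expect most of the care to go into justifying the outgoing representation of $(1-\chi)\psi$ for $\kappa_\circ\in\mathbb C_-$. We handle it with Lemma \ref{Lemma_R_k_analytic}: all the identities hold for $\operatorname{Im}\kappa>0$ with analytic families, and we continue them by unique continuation of meromorphic maps.
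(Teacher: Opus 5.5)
\textbf{There is a genuine gap in the inclusion $\supseteq$.} Your inclusion $\subseteq$ is correct. So is the construction of the resonant state: with $G:=1_\Omega^*u_\circ+\gamma_0^*\varphi_\circ$, the function $\psi=R_{\kappa_\circ}G$ satisfies the transmission conditions and both equations, is outgoing, and is nonzero. What is missing is the last step, which is the whole content of the theorem: that such a $\psi$ forces a pole of $1_{U_2}R_\kappa(c,d)1_{U_1}^*$, i.e.\ that the pole of $Q_\kappa^{-1}(c,d)$ is not cancelled by the outer factors.

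Your ``iteration'' does not prove this. Under the holomorphy assumption, your two identities combine only to
\[
\psi=(R_{\kappa_\circ}-R_{\kappa_\circ}(c,d))[\Delta,\chi]\psi ,
\]
which is a representation of $\psi$, not a contradiction. Iterating, for instance with a second cut-off, reproduces identities of the same type. The reason is that identities at the single point $\kappa_\circ$ use $R_{\kappa_\circ}(c,d)(L_{c,d}-\kappa_\circ^2)=1$ only on compactly supported elements of $\mathcal D_d$, and $\psi$ is not one of them.

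Your first sketch does not fill the gap either. It leaves $\tilde g$ unspecified, and it then speaks of a nonzero residue while assuming analyticity. Quoting the black-box theorem of Dyatlov--Zworski instead would require checking its axioms for $L_{c,d}$, which is not bounded below. Finally, the outgoing representation of $(1-\chi)\psi$, where you expect the difficulty, is trivial here, since $\psi=R_{\kappa_\circ}G$ globally.

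\textbf{The missing idea: deform the tail together with $\kappa$.} Put $\psi_\kappa:=\chi\psi+(1-\chi)R_\kappa G$, with $\chi=1$ near $\overline\Omega$. For $\kappa\in\mathbb C_+$ one has $\psi_\kappa\in\mathcal D_d$ and
\[
(L_{c,d}-\kappa^2)\psi_\kappa=F_\kappa:=(\kappa_\circ^2-\kappa^2)\chi\psi+[\Delta,\chi]\bigl(R_\kappa G-R_{\kappa_\circ}G\bigr).
\]
The source $F_\kappa$ has the following properties:
\begin{itemize}
\item it is supported in $\mathrm{supp}\,\chi\subset U_1$;
\item it is entire as an $L^2(U_1)$-valued map, by Lemma~\ref{Lemma_R_k_analytic} with $s=1$;
\item it vanishes at $\kappa=\kappa_\circ$.
\end{itemize}
Hence $1_{U_2}\psi_\kappa=1_{U_2}R_\kappa(c,d)1_{U_1}^*F_\kappa$ holds on $\mathbb C_+$ off a discrete set, and therefore on $\mathbb C$ by meromorphic continuation. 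If the cut-off resolvent were holomorphic at $\kappa_\circ$, letting $\kappa\to\kappa_\circ$ would give $1_{U_2}\psi=0$ for $U_2\supset\overline\Omega$. Then $u_\circ=(-\Delta-\kappa_\circ^2)\psi|_\Omega=0$ and $\varphi_\circ=-[\gamma_1]\psi=0$, a contradiction.

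\textbf{Comparison with the paper.} With this step added, your route is complete and genuinely different from the paper's. The paper never constructs $\psi$. It argues that the sandwiched map in \eqref{Krein_EM_cut} inherits the poles of $Q_\kappa^{-1}(c,d)$ because $\Lambda_\kappa$ is injective and $\Theta_\kappa$ has dense range. Your argument needs only the resolvent identity, and that is an advantage. For $n=1$, Green's formula gives $\mu(d)\int_\Omega g\,dx=(cd-1)\langle 1,h\rangle_{\frac12,-\frac12}$ for every $(g,h)\in\ran\,\Theta_\kappa$. So $\ran\,\Theta_\kappa$ lies in the kernel of a nonzero continuous functional, and the density invoked in the paper does not hold as stated.
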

\begin{proof}
Let 
$$ \kappa\mapsto\Lambda_{\kappa}:=%
\begin{bmatrix}
1_{U_{2}}R_{\kappa}1_{\Omega}^{\ast} & 1_{U_{2}}S\!L_{\kappa}%
\end{bmatrix}
\,,\qquad
 \kappa\mapsto\Theta_{\kappa}:=%
\begin{bmatrix}
(cd-1)\Delta_{\Omega}^{\mathrm{max}}1_{\Omega}R_{\kappa}1_{U_{1}}^{\ast}\\
\mu(  d)  \,\gamma_{1}R_{\kappa}1_{U_{1}}^{\ast}%
\end{bmatrix}
$$
be the analytic, operators-valued maps on the left and on the right of $Q_{\kappa}^{-1}(  c,d)
$ in \eqref{Krein_EM_cut}. $\Theta_{\kappa}$ has dense range in $L^{2}(\Omega)\oplus
H^{-1/2}(\Gamma)$ and $\Lambda_{\kappa}$ is injective. Therefore, the poles in $
\mathbb{C}_{-} $ of
the map in \eqref{Krein_EM_cut} coincides with the poles in $\mathbb{C}\backslash
\mathbb{C}_{+} $ of $\kappa\mapsto
Q_{\kappa}^{-1}(c,d)$, i.e., with the points in the set $\{\kappa\in
\mathbb{C}_{-}:\ker(  Q_{\kappa}(
c,d))  \neq\{  0\}\}$. Otherwise, denoting by ${\ran}(
\Theta_{\kappa})  _{\Vert}$ the orthogonal projection onto the closed
subspace $\ker(Q_{\kappa}(c,d))$, we would have ${\ran}(\Theta_{\kappa
})_{\Vert}\cap\ker(Q_{\kappa}(c,d))=\{0\}$ which would imply ${\ran}%
(\Theta_{\kappa})\subseteq\left(  \ker Q_{\kappa}(c,d)\right)  ^{\perp}$, in
contradiction with the density of ${\ran}(\Theta_{\kappa})$.
\end{proof}
\begin{remark}\label{rem-pole}
By the same kind of reasonings as in the proof of Theorem \ref{teo-res}, one gets that if $\kappa_{\circ}\in\mathbb C\backslash\mathbb C_{-}$ is such that $\ker(Q_{\kappa_{\circ}}(c,d))\not=\{0\}$ then   $\kappa\mapsto R_{\kappa}(
c,d)$ has a pole at $\kappa_{\circ}$; hence, $\kappa_{\circ}^{2}$ is an isolated eigenvalue of $L_{c,d}$.
\end{remark}
\begin{remark}\label{rem-res} Notice that there are no $\kappa\in\mathbb R\backslash\{0\}$ such that $\ker(  Q_{\kappa}(
c,d))  \neq\{  0\}$. Indeed, such poles of $\kappa\mapsto Q_{\kappa}(
c,d)$ should correspond to strictly positive isolated eigenvalues of $L_{c,d}$. This is forbidden by $\sigma_{p}(L_{cd})\cap(0,+\infty)=\varnothing$ (see Theorem \ref{theorem_spectrum}). Furthermore, the poles in $\mathbb C_{+}$ of $\kappa\mapsto Q_{\kappa_{\circ}}(c,d)$  correspond to the isolated negative eigenvalues of $L_{c,d}$.
Hence, if $\kappa_{\circ}\in\{\mathbb C\backslash\{0\}:\ker(Q_{\kappa}(c,d))\not=\{0\}\}$ then $\kappa^{2}_{\circ}$ is either a resonance 
or a negative eigenvalue.
\end{remark}
\begin{remark}\label{rem-LAP} By Remark \ref{rem-res}, the limit \eqref{LAP_L_cd} and the relation \eqref{LAP_L_cd_id} hold for any $\kappa\in\mathbb R\backslash\{0\}$. 
\end{remark}

\section{Near-zero eigenvalues and resonances}

In this Section we consider the case $n=1$ and avoid the use of the index
$\ell$; furthermore, we parametrize $L_{c,d}$ by a small $\varepsilon>0$
asymptotically close to the distance of $\mu(  d)^{-1}  $ from
$\sigma(N\!P_{0})=\sigma(K_{0})$.

\begin{description}
\item[\emph{H1}] $\Omega\subset\mathbb{R}^{3}$ is open, simply connected,
bounded with $\mathcal{C}^{2}$ boundary $\Gamma$. We take $c=c(\varepsilon)$
and $d=d(\varepsilon)$ such that%
\begin{equation}
c(\varepsilon)=c_{0}+c_{1}\varepsilon+\mathcal{O}(\varepsilon^{2})\,,\qquad
d(\varepsilon)=\frac{\lambda_{\circ}-1/2-\rho(\varepsilon)
}{\lambda_{\circ}+1/2-\rho(\varepsilon)}\,,\qquad\rho(\varepsilon)=\rho_{0}\varepsilon
+\mathcal{O}(\varepsilon^{2})\,, \label{eps_fun}%
\end{equation}
where $c_{0}>0$, $\rho_{0}\neq0$ and $c_{1}\neq0$. 

\item[\emph{H2}] $\lambda_{\circ}\in\sigma(K_{0})\backslash\left\{
0,-1/2\right\}  $ is simple; $\varphi_{\circ}\in H^{-1/2}(\Gamma)$ denotes the
corresponding normalized eigenvector.
\end{description}

Denoting%
\[
\tilde{d}(  \varepsilon)  \equiv\mu\left(  d(\varepsilon)\right)
=2\,\frac{1-d(\varepsilon)}{1+d(\varepsilon)}%
\]
one has%
\begin{equation}
\tilde{d}^{-1}(  \varepsilon)  =\lambda_{\circ}-\rho\left(
\varepsilon\right)  \,. \label{d_tilda_eps}%
\end{equation}
In particular, for $\varepsilon$ small enough, $\tilde{d}^{-1}\left(
\varepsilon\right)  \notin\sigma(K_{0})$ and $\tilde{d}^{-1}\left(  0\right)
\in\sigma(K_{0})$ is an eigenvalue. Hence, \emph{H2 }replaces the
condition \emph{H0} in this setting. \par
We assume that the real-valued  functions
$\varepsilon\mapsto c(\varepsilon)$ and $\varepsilon\mapsto 
\rho(\varepsilon)$ have analytic extensions in an open neighborhood of the origin
$U\subseteq\mathbb{C}$. Obviously, this produce an analytic extension for $\varepsilon\mapsto \tilde{d}(  \varepsilon)$ as well. By abuse of notation, we denote such extensions by the same symbols.
\par
We introduce the analytic, operator valued map
\begin{equation}
U\ni\varepsilon\mapsto Q_{\kappa}(\varepsilon)  \equiv Q_{\kappa}(  c(
\varepsilon)  ,d(  \varepsilon)  )  =%
\begin{bmatrix}
c(  \varepsilon)  d(  \varepsilon)  +\left(  c\left(
\varepsilon\right)  d(  \varepsilon)  -1\right)  \kappa
^{2}N_{\kappa} & (c(  \varepsilon)  d(  \varepsilon)
-1)\kappa^{2}1_{\Omega}S\!L_{\kappa}\\
-\tilde{d}(  \varepsilon)  \,\gamma_{1}^{\mathrm{in}}N_{\kappa} &
1-\tilde{d}(  \varepsilon)  \,K_{\kappa}%
\end{bmatrix}
\label{Q_eps}%
\end{equation}
and look for a complex-valued function $\varepsilon\mapsto \kappa(\varepsilon)$, defined  in a complex neighborhood of the origin, such that 
\begin{equation}
\ker (Q_{\kappa(  \varepsilon)  }(  \varepsilon))
\neq\{  0\}  \,. \label{problem_eps}%
\end{equation}
Then, by Theorem \ref{teo-res} and Remark \ref{rem-res}, for $\varepsilon>0$ sufficiently small, $\varepsilon\mapsto\kappa^{2}(\varepsilon)$ gives rise to a branch of either resonances or negative eigenvalues of  $L(\varepsilon)\equiv L_{c(\varepsilon),d(\varepsilon)}$.\par
 From (\ref{eps_fun}) and
(\ref{d_tilda_eps}) there follows%
\[
\tilde{d}(  \varepsilon)  =\frac{1}{\lambda_{\circ}}-\frac{\rho_{0}%
}{\lambda_{\circ}^{2}}\,\varepsilon+\mathcal{O}(  \varepsilon)
\,,\qquad d(  \varepsilon)  =\frac{\lambda_{\circ}-1/2}{\lambda
_{\circ}+1/2}-\frac{\rho_{0}}{\left(  \lambda_{\circ}+1/2\right)  ^{2}}\,\varepsilon
+\mathcal{O}(  \varepsilon)  \,,
\]
\[
 g\left(  \varepsilon
\right)  :=c(  \varepsilon)  d(  \varepsilon)
=g_{0}+g_{1}\varepsilon+\mathcal{O}(  \varepsilon)  \,,
\]
where%
\[
g_{0}=c_{0}\frac{\lambda_{\circ}-1/2}{\lambda_{\circ}+1/2}\,,\qquad g_{1}=c_{1}%
\frac{\lambda_{\circ}-1/2}{\lambda_{\circ}+1/2}-\frac{c_{0}\rho_{0}}{\left(
\lambda_{\circ}+1/2\right)  ^{2}}\,,
\]
(notice that $b_{1}$ may be zero). By%
\[
Q_{0}(0)=%
\begin{bmatrix}
g_{0} & 0\\
-\frac{1}{\lambda_{\circ}}\,\gamma_{1}^{\mathrm{in}}N_{\kappa} & 1-\frac
{1}{\lambda_{\circ}}K_{\kappa}%
\end{bmatrix}
\,,
\]
the unique solution to $Q_{0}(0)(u,\varphi)=(0,0)$ is $(0,\varphi_{\circ})$. We
use the space $\hat{H}^{-1/2}(  \Gamma)  \simeq H^{-1/2}\left(
\Gamma\right)  $ introduced in Section \ref{Sec_Transmission} with the
modified scalar product (\ref{mod_prod}). The orthogonal projector on
$\varphi_{\circ}\in\hat{H}^{-1/2}(\Gamma)$ is the map%
\[
\pi_{\circ}\varphi:=\left\langle S_{0}\varphi_{\circ},\varphi\right\rangle _{\Gamma
}\varphi_{\circ}\,.
\]
Since $K_{0}$ is compact and self-adjoint on $\hat{H}^{-1/2}\left(
\Gamma\right)  $, one has%
\begin{equation}
\ran(  1-\lambda_{\circ}^{-1} K_{0})  =\left(  \ker(
1-\lambda_{\circ}^{-1} K_{0})  \right)  ^{\bot}=(  1-\pi_{\circ})
\hat{H}^{-1/2}(  \Gamma)  \,. \label{transmission_op_cond}%
\end{equation}
We adopt the notation%
\begin{equation}
\left(  1-\lambda_{\circ}^{-1} K_{0}\right)  _{\!\perp}:=(  1-\lambda
_{\circ}^{-1} K_{0}) |(  1-\pi_{\circ})  \hat
{H}^{-1/2}(  \Gamma)  =(  1-\pi_{\circ})  (
1-\lambda_{\circ}^{-1} K_{0})  (  1-\pi_{\circ})  \,.
\label{transmission_op_id}%
\end{equation}
By (\ref{transmission_op_cond}), $\left(  1-\lambda_{\circ}^{-1} K_{0}\right)
_{\!\perp}$ is a bounded bijection on $(  1-\pi_{\circ})  \hat
{H}^{-1/2}(  \Gamma)  $. Let%
\[
X:=L^{2}(  \Omega)  \oplus\hat{H}^{-1/2}(  \Gamma)
\]
and $P:X\rightarrow X$%
\[
P_{\circ}\left(  u,\varphi\right)  :=\left\langle \Psi_{\circ},\Psi\right\rangle
_{X}\Psi_{\circ}\,,\qquad\Psi_{\circ}:=\left(  \frac{1}{\lambda_{\circ}g_{0}}\ D\!L_{0}%
S_{0}\varphi_{\circ},\varphi_{\circ}\right)  \,,
\]
be the orthogonal projector on $\Psi_{\circ}$ in $X$. We define
\[
X_{\circ}:=\left\{  \Psi\in X:P_{\circ}\Psi=0\right\}  =\left(  1-P_{\circ}\right)  X\,.
\]

\begin{lemma}%
\begin{equation}
X_{\circ}:=\left\{  \left(  u,\varphi\right)  \in X:\pi_{\circ}\left(  \varphi
+\frac{1}{\lambda_{\circ}g_{0}}\gamma_{1}^{\mathrm{in}}N_{0}u\right)  =0\right\}
\label{X_def}%
\end{equation}

\end{lemma}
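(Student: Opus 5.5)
The plan is to unfold the definition of $P_{\circ}$ and reduce the condition $P_{\circ}\Psi=0$ to a single scalar identity. This identity should follow from an adjointness relation between $\gamma_{1}^{\mathrm{in}}N_{0}$ and $1_{\Omega}D\!L_{0}$.

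Since $P_{\circ}\Psi=\langle\Psi_{\circ},\Psi\rangle_{X}\Psi_{\circ}$ with $\Psi_{\circ}\neq0$ (its second component is $\varphi_{\circ}\neq0$), we have $P_{\circ}\Psi=0$ iff $\langle\Psi_{\circ},\Psi\rangle_{X}=0$. The normalisation of $\Psi_{\circ}$ plays no role here. Writing $\Psi=(u,\varphi)$ and recalling that the scalar product of $X$ is that of $L^{2}(\Omega)\oplus\hat H^{-1/2}(\Gamma)$, we get
\[
\langle\Psi_{\circ},\Psi\rangle_{X}=\frac{1}{\lambda_{\circ}g_{0}}\,\langle 1_{\Omega}D\!L_{0}S_{0}\varphi_{\circ},u\rangle_{L^{2}(\Omega)}+[\varphi_{\circ},\varphi]_{-\frac12}\,.
\]
Here $\lambda_{\circ}$ and $g_{0}$ are real, since $K_{0}$ is self-adjoint in $\hat H^{-1/2}(\Gamma)$ and $c_{0}>0$, so no conjugation issue arises from the prefactor. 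On the other side, $\pi_{\circ}\chi=[\varphi_{\circ},\chi]_{-\frac12}\varphi_{\circ}$, because $\langle S_{0}\varphi_{\circ},\chi\rangle_{\frac12,-\frac12}=[\varphi_{\circ},\chi]_{-\frac12}$ by (\ref{mod_prod}). Hence $\pi_{\circ}\big(\varphi+\frac{1}{\lambda_{\circ}g_{0}}\gamma_{1}^{\mathrm{in}}N_{0}u\big)=0$ iff
\[
[\varphi_{\circ},\varphi]_{-\frac12}+\frac{1}{\lambda_{\circ}g_{0}}\,\langle S_{0}\varphi_{\circ},\gamma_{1}^{\mathrm{in}}N_{0}u\rangle_{\frac12,-\frac12}=0\,.
\]
Comparing the two expressions, it suffices to prove that for all $u\in L^{2}(\Omega)$ and $\phi\in H^{1/2}(\Gamma)$ (applied with $\phi=S_{0}\varphi_{\circ}$)
\[
\langle 1_{\Omega}D\!L_{0}\phi,u\rangle_{L^{2}(\Omega)}=\langle \phi,\gamma_{1}^{\mathrm{in}}N_{0}u\rangle_{\frac12,-\frac12}\,.
\]

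To prove this I would use $N_{0}=1_{\Omega}R_{0}1_{\Omega}^{\ast}$. For $u\in L^{2}(\Omega)$, $R_{0}1_{\Omega}^{\ast}u$ is $H^{2}$ near $\Gamma$ by Lemma \ref{Lemma_R_k_analytic} with a cutoff. Therefore its Neumann trace has no jump and $\gamma_{1}^{\mathrm{in}}N_{0}u=\gamma_{1}R_{0}1_{\Omega}^{\ast}u$. Taking adjoints with respect to the relevant dualities gives $(\gamma_{1}R_{0}1_{\Omega}^{\ast})^{\ast}=1_{\Omega}R_{0}^{\ast}\gamma_{1}^{\ast}=1_{\Omega}R_{0}\gamma_{1}^{\ast}=1_{\Omega}D\!L_{0}$. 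This uses $R_{0}^{\ast}=R_{-\bar 0}=R_{0}$, i.e. the Newton kernel $\frac{1}{4\pi|x-y|}$ is real and symmetric, together with the definition $D\!L_{\kappa}=R_{\kappa}\gamma_{1}^{\ast}$.

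The only delicate point is making this adjoint computation rigorous with the mixed pairings. Since $R_{0}$ is not bounded on $L^{2}(\mathbb R^{3})$, I would insert a cutoff $\psi\in\mathcal C_{c}^{\infty}$ equal to $1$ near $\overline\Omega$, as in Lemma \ref{Lemma_Q_Fredholm}, writing $1_{\Omega}D\!L_{0}=1_{\Omega}M_{\psi}R_{0}M_{\psi}\gamma_{1}^{\ast}$. I would then first verify the identity for smooth $u$ and $\phi$ by Fubini on the explicit kernel, and extend it by density and continuity, using $M_{\psi}R_{0}M_{\psi}\in\mathscr B(H^{-2},L^{2})\cap\mathscr B(L^{2},H^{2})$ and $\gamma_{1}^{\ast}\in\mathscr B(H^{-1/2}(\Gamma),H^{-2}(\mathbb R^{3}))$. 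Reality of $\phi=S_{0}\varphi_{\circ}$ is not needed, since both sides are antilinear in the first slot in the same way.
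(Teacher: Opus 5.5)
Your proposal is correct and follows the paper's proof: you expand $\langle\Psi_{\circ},\Psi\rangle_{X}$, move $1_{\Omega}D\!L_{0}$ across the pairing onto $\gamma_{1}^{\mathrm{in}}N_{0}$, and read off the condition $\pi_{\circ}\big(\varphi+\frac{1}{\lambda_{\circ}g_{0}}\gamma_{1}^{\mathrm{in}}N_{0}u\big)=0$. The only difference is that you justify the duality identity $\langle 1_{\Omega}D\!L_{0}\phi,u\rangle_{L^{2}(\Omega)}=\langle\phi,\gamma_{1}^{\mathrm{in}}N_{0}u\rangle_{\frac12,-\frac12}$ (via $D\!L_{0}=R_{0}\gamma_{1}^{\ast}$, $R_{0}^{\ast}=R_{0}$ and a cutoff), whereas the paper uses it without comment.
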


\begin{proof}
Let $\Psi\equiv\left(  u,\varphi\right)  $; then: $\Psi\in X_{\circ}$ if and only
if%
\begin{align*}
&  \left.  0=\left\langle \Psi_{\circ},\Psi\right\rangle _{X}=\frac{1}{\lambda
_{\circ}g_{0}}\left\langle D\!L_{0}S_{0}\varphi_{\circ},u\right\rangle _{\Omega
}+\left\langle S_{0}\varphi_{\circ},\varphi\right\rangle _{\Gamma}=\right. \\
&  \left.  \frac{1}{\lambda_{\circ}g_{0}}\left\langle S_{0}\varphi_{\circ},\gamma
_{1}^{\mathrm{in}}N_{0}u\right\rangle _{\Gamma}+\left\langle S_{0}\varphi
_{\circ},\varphi\right\rangle _{\Gamma}=\left\langle S_{0}\varphi_{\circ}%
,\varphi+\frac{1}{\lambda_{\circ}g_{0}}\gamma_{1}^{\mathrm{in}}N_{0}u\right\rangle
_{\Gamma}\,,\right.
\end{align*}
and hence, if and only if%
\[
\pi_{\circ}\left(  \varphi+\frac{1}{\lambda_{\circ}g_{0}}\gamma_{1}^{\mathrm{in}}%
N_{0}u\right)  =0\,.
\]

\end{proof}

Let us introduce the family of maps
\begin{align*}
&  \left.  \Phi_{\varepsilon,\kappa}:X\rightarrow\mathbb{C}\oplus
X_{\circ}\,,\right. \\
& \\
&  \left.  \Phi_{\varepsilon,\kappa}(u,\varphi):=\left(  \left\langle
S_{0}\varphi_{\circ},\varphi-\varphi_{\circ}\right\rangle _{\Gamma}\,,\ \left(
1-P_{\circ}\right)  Q_{\kappa}(\varepsilon)\left(  u,\varphi\right)  \right)
\,,\right.
\end{align*}
depending on $\varepsilon\in U\subseteq\mathbb{C}$ (an open neighbourhood of
the origin) and on $\kappa\in\mathbb{C}$. The assumptions \emph{H1} and the
mapping properties of the operators in $Q_{\kappa}(\varepsilon)$ imply that
$(  \varepsilon,\kappa)  \mapsto\Phi_{\varepsilon,\kappa}$ is
analytic in $U\times\mathbb{C}$.

\begin{lemma}
There exists an open neighbourhood $W_{0}$ of $(0,0)\in U\times\mathbb{C}$ and
a unique analytic map $\Psi(  \cdot,\cdot)  :W_{0}\rightarrow X$ such
that $\Psi(  0,0)  =(0,\varphi_{\circ})$ and%
\begin{equation}
\Phi_{\varepsilon,\kappa}(\Psi(  \varepsilon,\kappa)
)=0\,,\qquad(  \varepsilon,\kappa)  \in W_{0}\,. \label{AIFT_id}%
\end{equation}

\end{lemma}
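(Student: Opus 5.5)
The plan is to apply the analytic implicit function theorem in Banach spaces to $F(\varepsilon,\kappa,\Psi):=\Phi_{\varepsilon,\kappa}(\Psi)$, viewed as a map $U\times\mathbb C\times X\to\mathbb C\oplus X_{\circ}$, around the point $(0,0,(0,\varphi_{\circ}))$. Three things are needed: (a) $F$ is jointly analytic, which was already noted before the statement (it follows from \emph{H1} and Lemma \ref{Lemma_Q_Fredholm}); (b) $F(0,0,(0,\varphi_{\circ}))=0$; (c) the partial differential $D_{\Psi}F(0,0,(0,\varphi_{\circ}))$, which equals the linear map $\Phi_{0,0}$ itself up to the constant shift, is a bounded bijection from $X$ onto $\mathbb C\oplus X_{\circ}$. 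The bounded inverse theorem then gives a bounded inverse, and the analytic IFT yields $W_{0}$ together with a unique analytic $\Psi(\varepsilon,\kappa)$ satisfying \eqref{AIFT_id}.

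For (b): the first component of $F$ is $\langle S_{0}\varphi_{\circ},\varphi_{\circ}-\varphi_{\circ}\rangle_{\Gamma}=0$. For the second component, $Q_{0}(0)(0,\varphi_{\circ})=(0,(1-\lambda_{\circ}^{-1}K_{0})\varphi_{\circ})=0$ by \emph{H2}, because at $\kappa=0$ the $\kappa^{2}$-terms vanish.

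For (c), set $\kappa=0$ and $\varepsilon=0$, so that
\[
Q_{0}(0)(u,\varphi)=\big(g_{0}u,\,-\lambda_{\circ}^{-1}\gamma_{1}^{\mathrm{in}}N_{0}u+(1-\lambda_{\circ}^{-1}K_{0})\varphi\big).
\]
We must solve $\langle S_{0}\varphi_{\circ},\varphi\rangle_{\Gamma}=a$ and $(1-P_{\circ})Q_{0}(0)(u,\varphi)=(f,\eta)$ for any $a\in\mathbb C$ and $(f,\eta)\in X_{\circ}$, and show the solution is unique.

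\emph{Injectivity.} Suppose $a=0$ and $(1-P_{\circ})Q_{0}(0)(u,\varphi)=0$, so $Q_{0}(0)(u,\varphi)=t\Psi_{\circ}$ for some $t\in\mathbb C$. Apply $\pi_{\circ}$ to the second component, using the description \eqref{X_def}: since $\pi_{\circ}(1-\lambda_{\circ}^{-1}K_{0})=0$ by self-adjointness of $K_{0}$ in $\hat H^{-1/2}(\Gamma)$, we obtain a relation between $t$ and $\pi_{\circ}\gamma_{1}^{\mathrm{in}}N_{0}u$, where $u=t\,g_{0}^{-1}(\lambda_{\circ}g_{0})^{-1}D\!L_{0}S_{0}\varphi_{\circ}$. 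Combining this with the identity $\langle S_{0}\varphi_{\circ},\gamma_{1}^{\mathrm{in}}N_{0}u\rangle=\langle D\!L_{0}S_{0}\varphi_{\circ},u\rangle_{\Omega}$ (used in the proof of \eqref{X_def}) gives $t\,(1+c\|D\!L_{0}S_{0}\varphi_{\circ}\|^{2}_{L^{2}(\Omega)})=0$ for some constant $c>0$, hence $t=0$. Then $u=0$ and $(1-\lambda_{\circ}^{-1}K_{0})\varphi=0$, so $\varphi\in\mathrm{span}\,\varphi_{\circ}$, and $a=0$ forces $\varphi=0$.

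\emph{Surjectivity.} Set $u:=g_{0}^{-1}(f+t\,\cdot)$ and $\varphi=s\varphi_{\circ}+\varphi_{\perp}$. Choose $s$ from $a$. Choose $\varphi_{\perp}=(1-\lambda_{\circ}^{-1}K_{0})_{\perp}^{-1}(1-\pi_{\circ})(\eta+t\varphi_{\circ}+\lambda_{\circ}^{-1}\gamma_{1}^{\mathrm{in}}N_{0}u)$, which is possible by \eqref{transmission_op_cond}. Finally determine $t$ from the $\pi_{\circ}$-component; this is the same scalar equation as in the injectivity step, and its coefficient is nonzero.

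The main obstacle is exactly this sign/nondegeneracy computation: one has to verify that the $\pi_{\circ}$-part of the equation is solvable, i.e.\ that the real coefficient multiplying $t$ is nonzero. I expect this to reduce to positivity of $1+(\lambda_{\circ}g_{0})^{-2}\|D\!L_{0}S_{0}\varphi_{\circ}\|^{2}$, and I would first check that the constants match the normalization of $\Psi_{\circ}$. Alternatively, the cleanest route is to note that $D_{\Psi}F$ is of the form (invertible)$+$(finite-rank), so it is Fredholm of index zero, and injectivity alone then suffices.
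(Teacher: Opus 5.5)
Your proposal is correct and is essentially the paper's argument: the analytic implicit function theorem at $(0,0,(0,\varphi_{\circ}))$, with the derivative shown to be an isomorphism $X\to\mathbb{C}\oplus X_{\circ}$ by writing $\varphi=c_{\varphi}\varphi_{\circ}+\varphi_{\perp}$ and inverting $(1-\lambda_{\circ}^{-1}K_{0})_{\perp}$. The only difference is your extra scalar $t$: the paper drops $(1-P_{\circ})$ outright, which is legitimate because the identity behind \eqref{X_def} gives $\langle\Psi_{\circ},Q_{0}(0)\Psi\rangle_{X}=0$ for all $\Psi\in X$, i.e.\ $(1-P_{\circ})Q_{0}(0)=Q_{0}(0)$; correspondingly, your coefficient $1+(\lambda_{\circ}g_{0})^{-2}\|D\!L_{0}S_{0}\varphi_{\circ}\|^{2}_{L^{2}(\Omega)}$ is just $\|\Psi_{\circ}\|_{X}^{2}$, so the step you flagged as the main obstacle is automatic and forces $t=0$.
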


\begin{proof}
Let us notice that $\Phi_{0,0}(0,\varphi_{\circ})=(0,0)$, while the Fr\'{e}chet
derivative of $\Phi_{0}$ at $(0,\varphi_{\circ})$ is%
\begin{align*}
&  \left.  D_{(0,\varphi_{\circ})}\Phi_{0,0}:X\rightarrow\mathbb{C}\oplus
X_{\circ}\,,\right. \\
& \\
&  \left.  \left(  D_{(0,\varphi_{\circ})}\Phi_{0,0}\right)  (u,\varphi)=\left(
\left\langle S_{0}\varphi_{\circ},\varphi\right\rangle _{\Gamma}\,,\ \left(
1-P_{\circ}\right)  \left(  g_{0}u,-\frac{1}{\lambda_{\circ}}\gamma_{1}^{\mathrm{in}%
}N_{0}u+(1-\frac{1}{\lambda_{\circ}}K_{0})\varphi\right)  \right)  \,.\right.
\end{align*}
Let us consider the problem%
\[
\left(  D_{(0,\varphi_{\circ})}\Phi_{0,0}\right)  (u,\varphi)=\left(  \kappa
_{\ast},u_{\ast},\varphi_{\ast}\right)  \in\mathbb{C}\oplus X_{\circ}\,.
\]
This corresponds to
\[
\left\{
\begin{array}
[c]{l}%
\left\langle S_{0}\varphi_{\circ},\varphi\right\rangle _{\Gamma}=\kappa_{\ast
}\,,\\
g_{0}u=u_{\ast}\,,\\
-\lambda_{\circ}^{-1}\gamma_{1}^{\mathrm{in}}N_{0}u+\left(  1-\frac{1}{\lambda
_{\circ}}K_{0}\right)  \varphi=\varphi_{\ast}\,,
\end{array}
\right.
\]
and, using $\hat{H}^{-1/2}(\Gamma)\ni\varphi=c_{\varphi}\varphi_{\circ}%
+\varphi_{\bot}$, one gets%
\begin{equation}
\left\{
\begin{array}
[c]{l}%
c_{\varphi}=\kappa_{\ast}\,,\\
g_{0}u=u_{\ast}\,,\\
(1-\lambda_{\circ}^{-1}K_{0})_{\bot}\varphi_{\bot}=\varphi_{\ast}+\frac{1}%
{\lambda_{\circ}g_{0}}\gamma_{1}^{\mathrm{in}}N_{0}u_{\ast}\,,
\end{array}
\right.  \label{range_pb}%
\end{equation}
where, by the previous Lemma,%
\[
\varphi_{\ast}+\frac{1}{\lambda_{\circ}g_{0}}\gamma_{1}^{\mathrm{in}}N_{0}u_{\ast
}\in(  1-\pi_{\circ})  \hat{H}^{-1/2}(\Gamma)\,.
\]
Since $(1-\lambda_{\circ}^{-1}K_{0})_{\bot}$ has bounded inverse on $\left(
1-\pi_{\circ}\right)  \hat{H}^{-1/2}(  \Gamma)  $, (\ref{range_pb}) is
uniquely solved by%
\begin{equation}
\left\{
\begin{array}
[c]{l}%
c_{\varphi}=\kappa_{\ast}\,,\\
u=\frac{1}{g_{0}}u_{\ast}\,,\\
\varphi_{\bot}=(1-\frac{1}{\lambda_{\circ}}K_{0})_{\bot}^{-1}\left(  \varphi
_{\ast}+\frac{1}{\lambda_{\circ}g_{0}}\gamma_{1}^{\mathrm{in}}N_{0}u_{\ast
}\right)  \,.
\end{array}
\right.  \label{range_pb_sol}%
\end{equation}
Hence, $D_{(0,\varphi_{\circ})}\Phi_{0,0}:X\rightarrow\mathbb{C}\oplus X_{\circ}$ is
an isomorphism whose inverse is%
\[
\left(  D_{(0,\varphi_{\circ})}\Phi_{0,0}\right)  ^{-1}(\kappa,u,\varphi)=\left(
g_{0}^{-1}u\,,\ \kappa\varphi_{\circ}+(1-\lambda_{\circ}^{-1}K_{0})_{\bot}%
^{-1}(\varphi+\frac{1}{\lambda_{\circ}g_{0}}\gamma_{1}^{\mathrm{in}}%
N_{0}u)\right)  \,.
\]
By the analytic implicit function theorem in Banach spaces, see e.g. in
\cite{Whitt PAMS 65}, there exists an open neighbourhood $W_{0}$ of $(0,0)\in
U\times\mathbb{C}$ and a unique analytic map $\Psi:W_{0}\rightarrow X$ such
that: $\Psi\left(  0,0\right)  =(0,\varphi_{\circ})$ and (\ref{AIFT_id}) holds.
\end{proof}

We next denote%
\[
\Psi(\varepsilon,\kappa)\equiv\left(  u(  \varepsilon,\kappa)
\,,\ \varphi(  \varepsilon,\kappa)  \right)  \,.
\]
Then,%
\begin{equation}
u\left(  0,0\right)  =0\,,\qquad\varphi\left(  0,0\right)  =\varphi_{\circ}\,,
\label{pivot}%
\end{equation}
while (\ref{AIFT_id}) corresponds to the relations%
\begin{align}
&  \left.  \left\langle S_{0}\varphi_{\circ},\varphi(\varepsilon,\kappa
)-\varphi_{\circ}\right\rangle _{\Gamma}=0\,,\right. \label{IFT 1}\\
& \nonumber\\
&  \left.  Q_{\kappa}(\varepsilon)(u(\varepsilon,\kappa),\varphi
(\varepsilon,\kappa))=f(\varepsilon,\kappa)\left(  \frac{1}{\lambda_{\circ}g_{0}%
}\,D\!L_{0}\varphi_{\circ},\varphi_{\circ}\right)  \,,\right.  \label{IFT 2}%
\end{align}
where%
\begin{align}
&  \left.  f(\varepsilon,\kappa):=\right. \nonumber\\
&  \left.  \frac{1}{\lambda_{\circ}g_{0}}\left\langle D\!L_{0}\varphi_{\circ},\left(
g(  \varepsilon)  +\left(  g(  \varepsilon)  -1\right)
\kappa^{2}N_{\kappa}\right)  u(  \varepsilon,\kappa)  +\left(
g(  \varepsilon)  -1\right)  \kappa^{2}SL_{\kappa}\varphi\left(
\varepsilon,\kappa\right)  \right\rangle _{\Omega}\right. \nonumber\\
&  \left.  +\left\langle S_{0}\varphi_{\circ},-\tilde{d}\left(  \varepsilon
\right)  \gamma_{1}^{\mathrm{in}}N_{\kappa}u(  \varepsilon,\kappa)
+\left(  1-\tilde{d}(  \varepsilon)  K_{\kappa}\right)
\varphi(  \varepsilon,\kappa)  \right\rangle _{\Gamma}\,,\right.
\label{f_def}%
\end{align}
is analytic from $W_{0}$ to $\mathbb{C}$. By (\ref{IFT 1}) one has%
\[
\left\langle S_{0}\varphi_{\circ},\left(  \partial_{\varepsilon}^{m}%
\partial_{\kappa}^{n}\varphi\right)  \left(  0,0\right)  \right\rangle
_{\Gamma}=0\,,\qquad n+m\geq1\,,
\]
from which it follows%
\begin{equation}
\partial_{\varepsilon}^{m}\partial_{\kappa}^{n}\varphi\left(  0,0\right)
\in(  1-\pi_{\circ})  \hat{H}^{-1/2}(  \Gamma)  \,,
\label{var_phi_constraint}%
\end{equation}
while (\ref{IFT 2}) yields%
\begin{equation}
\left.  \partial_{\varepsilon}^{m}\partial_{\kappa}^{n}Q_{\kappa}%
(\varepsilon)(u(\varepsilon,\kappa),\varphi(\varepsilon,\kappa))\right\vert
_{\varepsilon=\kappa=0}=\partial_{\varepsilon}^{m}\partial_{\kappa}%
^{n}f\left(  0,0\right)  \,\left(  \frac{1}{\lambda_{\circ}g_{0}}\,D\!L_{0}%
\varphi_{\circ},\varphi_{\circ}\right)  \,. \label{var_Psi_constraint}%
\end{equation}
We will use the following result.

\begin{lemma}%
\begin{equation}
S\!L_{0}\varphi_{\circ}=\frac{2}{2\lambda_{\circ}-1}DL_{0}S_{0}\varphi_{\circ}
\label{SL_0_id}%
\end{equation}

\end{lemma}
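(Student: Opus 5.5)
The plan is to compare the two functions through their boundary traces and then invoke uniqueness for the interior Dirichlet problem. I will read \eqref{SL_0_id} as an identity in $L^{2}(\Omega)$, i.e.\ between restrictions to $\Omega$, which is how it enters the pairing $\langle D\!L_{0}\varphi_{\circ},\cdot\rangle_{\Omega}$ in \eqref{f_def}. As explained below, the exterior constants differ, so the identity cannot hold on all of $\RE^{3}$.

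Set $u:=S\!L_{0}\varphi_{\circ}$ and $v:=D\!L_{0}S_{0}\varphi_{\circ}$, with $\varphi_{\circ}$ as in \emph{H2}.

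\textbf{Step 1: both functions are harmonic in $\Omega$.} This holds because $u=R_{0}\gamma_{0}^{\ast}\varphi_{\circ}$ and $v=R_{0}\gamma_{1}^{\ast}S_{0}\varphi_{\circ}$ are potentials of distributions supported on $\Gamma$. Also $S_{0}\varphi_{\circ}\in H^{1/2}(\Gamma)$, so $v|\Omega\in H^{1}(\Omega)$.

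\textbf{Step 2: the Dirichlet trace of $u$.} Since $[\gamma_{0}]u=0$, one has $\gamma_{0}^{\rm in}u=\gamma_{0}u=S_{0}\varphi_{\circ}$.

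\textbf{Step 3: the Dirichlet trace of $v$.} By definition $\gamma_{0}v=N\!P_{0}S_{0}\varphi_{\circ}$. The Plemelj symmetrization \eqref{symm_id} then gives
$$
\gamma_{0}v=S_{0}K_{0}\varphi_{\circ}=\lambda_{\circ}S_{0}\varphi_{\circ}.
$$
Next I need the jump $[\gamma_{0}]v$. The convention is fixed by the one already used for the single layer. Green's formula gives, for $w$ harmonic off $\Gamma$,
$$
-\Delta w=-\gamma_{0}^{\ast}[\gamma_{1}]w+\gamma_{1}^{\ast}[\gamma_{0}]w .
$$
This is consistent with $-\Delta S\!L_{0}\varphi=\gamma_{0}^{\ast}\varphi$ and $[\gamma_{1}]S\!L_{0}\varphi=-\varphi$. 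Applied to $-\Delta D\!L_{0}\psi=\gamma_{1}^{\ast}\psi$, it yields $[\gamma_{0}]D\!L_{0}\psi=\psi$ and $[\gamma_{1}]D\!L_{0}\psi=0$. Hence, using $\gamma_{0}^{\rm in}=\gamma_{0}-\frac12[\gamma_{0}]$,
$$
\gamma_{0}^{\rm in}v=\big(\lambda_{\circ}-\tfrac12\big)S_{0}\varphi_{\circ}.
$$
This sign bookkeeping is the step I expect to be the main obstacle. A wrong sign would produce the factor $2/(2\lambda_{\circ}+1)$ instead, so I would double-check it on the constant eigenfunction. For $\psi=1$ one has $D\!L_{0}1=-\chi_{\Omega}$, which has jump $+1$ across $\Gamma$ as required, and which matches $N\!P_{0}1=-\frac12$.

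\textbf{Step 4: conclude by uniqueness.} Both $u|\Omega$ and $(\lambda_{\circ}-\frac12)^{-1}v|\Omega$ are $H^{1}(\Omega)$-harmonic functions with the same Dirichlet datum $S_{0}\varphi_{\circ}$. Uniqueness for the interior Dirichlet problem gives $u|\Omega=\frac{2}{2\lambda_{\circ}-1}\,v|\Omega$, which is \eqref{SL_0_id}. Division is legitimate because $\sigma(K_{0})\subset[-1/2,1/2)$ forces $\lambda_{\circ}\neq\frac12$.

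The same argument in $\Omega_{\+}$ gives $\gamma_{0}^{\rm ex}v=(\lambda_{\circ}+\frac12)S_{0}\varphi_{\circ}$. Here decay at infinity holds because $\langle1,\varphi_{\circ}\rangle_{\frac12,-\frac12}=0$ by \eqref{zero_mean}. So $v=(\lambda_{\circ}+\frac12)u$ on $\Omega_{\+}$, with a different constant, which confirms that \eqref{SL_0_id} is an identity on $\Omega$ only.
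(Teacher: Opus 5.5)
Your proof is correct, and your signs match the paper's conventions. The identity $-\Delta w=\gamma_{1}^{\ast}[\gamma_{0}]w-\gamma_{0}^{\ast}[\gamma_{1}]w$ reproduces $[\gamma_{1}]S\!L_{0}\varphi=-\varphi$ and gives $[\gamma_{0}]D\!L_{0}\psi=\psi$. The check $D\!L_{0}1=-\chi_{\Omega}$, $N\!P_{0}1=-\frac12$ confirms this. So $\gamma_{0}^{\mathrm{in}}D\!L_{0}S_{0}\varphi_{\circ}=(\lambda_{\circ}-\frac12)S_{0}\varphi_{\circ}$, and Dirichlet uniqueness in $H^{1}(\Omega)$ finishes the argument.

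The paper takes a different route. It writes $u=S\!L_{0}\varphi_{\circ}$ in $\Omega$ through the interior Green representation formula $u=S\!L_{0}\gamma_{1}^{\mathrm{in}}u-D\!L_{0}\gamma_{0}^{\mathrm{in}}u$. The Neumann datum comes from the interior Dirichlet-to-Neumann map $S_{0}^{-1}(\frac12+N\!P_{0})$, so that $\gamma_{1}^{\mathrm{in}}u=(\frac12+K_{0})\varphi_{\circ}=(\lambda_{\circ}+\frac12)\varphi_{\circ}$. This yields $S\!L_{0}\varphi_{\circ}=-D\!L_{0}S_{0}\varphi_{\circ}+(\lambda_{\circ}+\frac12)S\!L_{0}\varphi_{\circ}$ in $\Omega$, and the identity follows by rearranging.

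Comparing the two:
\begin{itemize}
\item The paper replaces your two ingredients (the double-layer jump relation and interior Dirichlet uniqueness) with one citation of the representation formula. That formula has uniqueness built in and avoids a separate sign check for $[\gamma_{0}]D\!L_{0}$.
\item Your version is more self-contained. It also makes explicit what the paper leaves implicit: the identity holds only on $\Omega$. That is all that is used in $c_{0,2}$, where it enters through $\langle\cdot,\cdot\rangle_{\Omega}$.
\item Evaluated in $\Omega_{\mathrm{ex}}$, where the representation formula vanishes, the paper's approach gives your exterior relation $D\!L_{0}S_{0}\varphi_{\circ}=(\lambda_{\circ}+\frac12)S\!L_{0}\varphi_{\circ}$ immediately.
\end{itemize}

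One minor point about that exterior remark: exterior Dirichlet uniqueness does not need $\langle 1,\varphi_{\circ}\rangle_{\frac12,-\frac12}=0$. Every single-layer potential already tends to zero at infinity; the zero mean only improves the decay to $O(|x|^{-2})$.
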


\begin{proof}
Let $u=S\!L_{0}\varphi_{\circ}$; then%
\[
\left\{
\begin{array}
[c]{l}%
-\Delta u=0\,,\qquad\text{in }\Omega\,,\\
\gamma_{0}^{\mathrm{in}}u=S_{0}\varphi_{\circ}\,.
\end{array}
\right.
\]
From \cite[Theorem 7.5]{McLe} follows%
\[
u=-DL_{0}S_{0}\varphi_{\circ}+S\!L_{0}S_{0}^{-1}\left(  \frac{1}{2}+N\!P_{0}\right)
S_{0}\varphi_{\circ}\,.
\]
From $N\!P_{0}S_{0}\varphi_{\circ}=\lambda_{\circ}S_{0}\varphi_{\circ}$ follows%
\[
S\!L_{0}\varphi_{\circ}=-DL_{0}S_{0}\varphi_{\circ}+\left(  \frac{1}{2}+\lambda
_{\circ}\right)  S\!L_{0}\varphi_{\circ}%
\]
where, by \emph{H2}, $\lambda_{\circ}\in\left(  -1/2,1/2\right)  \backslash
\left\{  0\right\}  $. Therefore, we get%
\[
S\!L_{0}\varphi_{\circ}=-\left(  \frac{1}{2}-\lambda_{\circ}\right)  ^{-1}DL_{0}%
S_{0}\varphi_{\circ}%
\]
which corresponds to (\ref{SL_0_id}).
\end{proof}

Introducing the coefficients%
\[
c_{n,m}:=\left(  \partial_{\varepsilon}^{n}\partial_{\kappa}^{m}f\right)
\left(  0,0\right)  \,,
\]
one has

\begin{lemma}%
\begin{align}
&  \left.  c_{1,0}=\frac{\rho_{0}}{\lambda_{\circ}}\left\Vert \varphi
_{\circ}\right\Vert _{\hat{H}^{-1/2}(  \Gamma)  }^{2}\,,\right.
\label{c_10}\\
&  \left.  c_{0,1}=c_{1,1}=c_{2,1}=c_{3,0}=0\,,\right. \label{c01_11}\\
&  \left.  c_{0,2}=4\,\frac{g_{0}-1}{g_{0}}\frac{1}{2\lambda_{\circ}-1}\left\Vert
DL_{0}S_{0}\varphi_{\circ}\right\Vert _{L^{2}(  \Omega)  }%
^{2}\,,\right.  \label{c_02}%
\end{align}

\end{lemma}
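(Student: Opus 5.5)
The plan is to compute the Taylor coefficients of $f$ at $(0,0)$ by differentiating the identity (\ref{IFT 2}) and pairing it against the vector $\Psi_{\circ}$ in $X$. By construction, $\langle\Psi_{\circ},Q_{\kappa}(\varepsilon)\Psi(\varepsilon,\kappa)\rangle_X$ equals $f(\varepsilon,\kappa)\,\|\Psi_{\circ}\|^2_X$ up to the normalization in (\ref{f_def}), which is exactly the expression in (\ref{f_def}). So $c_{n,m}$ is obtained by expanding $Q_\kappa(\varepsilon)$ and $\Psi(\varepsilon,\kappa)$ in powers of $\varepsilon,\kappa$ and collecting terms.

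\emph{Derivatives of $\Psi$.} First I determine the low-order derivatives of $\Psi=(u,\varphi)$ at $(0,0)$ from (\ref{var_phi_constraint})–(\ref{var_Psi_constraint}) and the explicit inverse of $D\Phi_{0,0}$. The first component of $Q_\kappa(\varepsilon)$ is $g(\varepsilon)u+(g-1)\kappa^2(N_\kappa u+1_\Omega S\!L_\kappa\varphi)$. Its first-order derivatives at the origin give $g_0\,\partial u=\partial f\,(\lambda_\circ g_0)^{-1}D\!L_0\varphi_\circ$. Hence $u$ vanishes to first order unless $\partial f\neq0$, which I expect to be consistent.

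\emph{Easy coefficients.} For $c_{1,0}$, differentiate in $\varepsilon$ the $\Gamma$-component paired with $S_0\varphi_\circ$. Using $u(0,0)=0$ and the self-adjointness of $K_0$ in $\hat H^{-1/2}$, which gives $\langle S_0\varphi_\circ,(1-\lambda_\circ^{-1}K_0)\cdot\rangle=0$, the only surviving term is $-\tilde d'(0)\langle S_0\varphi_\circ,K_0\varphi_\circ\rangle=\frac{\rho_0}{\lambda_\circ^2}\lambda_\circ\|\varphi_\circ\|^2_{\hat H^{-1/2}}$. This yields (\ref{c_10}).

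For $c_{0,1}$, I will use $\partial_\kappa K_\kappa|_{0}=\gamma_1\partial_\kappa R_\kappa\gamma_0^*$. Its kernel is $\frac{i}{4\pi}$, a constant, so its output is proportional to $\langle1,\varphi_\circ\rangle$, which vanishes by (\ref{zero_mean}). The same cancellation applies to $\partial_\kappa N_\kappa$. The $\kappa^2$ prefactors kill all other first-order terms in $\kappa$. The vanishing of $c_{1,1}$, $c_{2,1}$ and $c_{3,0}$ should follow similarly: odd $\kappa$-derivatives of order one always produce the constant-kernel operator hitting zero-mean data, and the $\varepsilon$-only expansion of $1-\tilde d(\varepsilon)K_0$ acting on the $\varepsilon$-derivatives of $\varphi$ gives cancellations via $\pi_\circ$. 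I expect $c_{3,0}$ to be the fiddliest of these, since it needs $\partial_\varepsilon\varphi$ and $\partial^2_\varepsilon\varphi$ explicitly through $(1-\lambda_\circ^{-1}K_0)_\perp^{-1}$. Its vanishing presumably uses that $\rho$ and $\tilde d^{-1}$ are affine to the order needed, or more likely that $\partial_\varepsilon\varphi(0,0)=0$, which follows because the $\varepsilon$-derivative of the equation lies in $\mathrm{span}\,\varphi_\circ$, so its orthogonal part is zero.

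\emph{Main computation: $c_{0,2}$.} Take $\partial^2_\kappa$ at $(0,0)$. In the $\Omega$-component, the term $(g_0-1)\,2\,1_\Omega S\!L_0\varphi_\circ$ appears. I pair it with $(\lambda_\circ g_0)^{-1}D\!L_0S_0\varphi_\circ$, using Lemma (\ref{SL_0_id}) to replace $S\!L_0\varphi_\circ$ by $\frac{2}{2\lambda_\circ-1}D\!L_0S_0\varphi_\circ$. This gives $\frac{4(g_0-1)}{g_0\lambda_\circ(2\lambda_\circ-1)}\|D\!L_0S_0\varphi_\circ\|^2$ up to normalization. The $\Gamma$-component contributes $-\lambda_\circ^{-1}\langle S_0\varphi_\circ,\partial_\kappa^2K_\kappa|_0\varphi_\circ+\gamma_1^{\rm in}N_0\partial^2_\kappa u\rangle$. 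Here $\partial^2_\kappa u=(\partial^2_\kappa f)(\lambda_\circ g_0^2)^{-1}D\!L_0\varphi_\circ$ feeds back, and $\partial_\kappa^2K_\kappa|_0$ has kernel $-\frac{\partial_\nu|x-y|}{8\pi}$. The obstacle is to show these boundary contributions either cancel or recombine, via Green's identity in $\Omega$ and the jump relations, into the stated constant; the factor $\lambda_\circ$ should be absorbed accordingly. I would try Green's formula on $\Omega$ with $S\!L_0\varphi_\circ$ and $D\!L_0S_0\varphi_\circ$ to convert the $\partial_\kappa^2 K$ term into a volume integral, then solve the resulting linear relation for $c_{0,2}$.
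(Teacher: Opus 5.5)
\textbf{Framework, $c_{1,0}$ and the coefficients linear in $\kappa$.} Your frame, $f=\langle\Psi_\circ,Q_\kappa(\varepsilon)\Psi(\varepsilon,\kappa)\rangle_X$ expanded by Leibniz, is the paper's. However, you use only half of the fact that drives it: $\Psi_\circ$ annihilates all of $\mathrm{ran}\,Q_0(0)$, not just its $\varphi$-part. This holds because $\langle D\!L_0S_0\varphi_\circ,u\rangle_\Omega=\langle S_0\varphi_\circ,\gamma_1^{\mathrm{in}}N_0u\rangle_\Gamma$ (the identity behind (\ref{X_def})) and $K_0$ is self-adjoint in $\hat H^{-1/2}(\Gamma)$. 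Hence the term $\langle\Psi_\circ,Q_0(0)\,\partial^m_\varepsilon\partial^n_\kappa\Psi(0,0)\rangle_X$ always drops, and no feedback through (\ref{IFT 2}) is needed. As printed, (\ref{IFT 2}) and (\ref{f_def}) differ by the factor $\|\Psi_\circ\|_X^2$, so a feedback relation would distort the constants.

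In your $c_{1,0}$ step you keep only the $\Gamma$-row. There the term $-\lambda_\circ^{-1}\langle S_0\varphi_\circ,\gamma_1^{\mathrm{in}}N_0\partial_\varepsilon u(0,0)\rangle_\Gamma$ survives and is nonzero: by your own first-order equation, $\partial_\varepsilon u(0,0)$ is a nonzero multiple of $D\!L_0S_0\varphi_\circ$. It is cancelled by the $\Omega$-row term $\lambda_\circ^{-1}\langle D\!L_0S_0\varphi_\circ,\partial_\varepsilon u(0,0)\rangle_\Omega$, which is the cancellation the paper displays.

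Moreover, (\ref{d_tilda_eps}) gives $\tilde d'(0)=+\rho_0/\lambda_\circ^2$, so the surviving term is $-\frac{\rho_0}{\lambda_\circ}\|\varphi_\circ\|^2_{\hat H^{-1/2}(\Gamma)}$. You and the paper both use the misprinted expansion $\tilde d(\varepsilon)=\lambda_\circ^{-1}-\rho_0\lambda_\circ^{-2}\varepsilon+\dots$.

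For the coefficients linear in $\kappa$, the reason is not (\ref{zero_mean}). The operator $\partial_\kappa R_\kappa|_{\kappa=0}$ has the constant kernel $\frac{i}{4\pi}$, which $\gamma_1$ annihilates, so $\partial_\kappa K_\kappa|_{\kappa=0}=0=\gamma_1^{\mathrm{in}}\partial_\kappa N_\kappa|_{\kappa=0}$. With the factors $\kappa^2$ in the first row, $\partial_\kappa Q_\kappa(\varepsilon)|_{\kappa=0}=0$ for every $\varepsilon$. Differentiating (\ref{AIFT_id}) then gives $\partial_\kappa\Psi(\varepsilon,0)=0$, hence $\partial_\kappa f(\varepsilon,0)=0$, i.e.\ $c_{n,1}=0$ for all $n$ at once. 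The paper instead checks these one at a time via (\ref{var_N_id}), (\ref{D_01_id}) and (\ref{D_11_id}).

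\textbf{The two remaining items cannot be finished as planned.}

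First, ``$c_{3,0}=0$'' is a misprint for $c_{0,3}=0$. That is what the paper's proof treats and what (\ref{f_exp})--(\ref{f_exp_rem}) require; $c_{3,0}$ sits in $O(\varepsilon^3)$ and is not zero in general. At $\kappa=0$ the system is solvable in closed form: the first row gives $u\in\mathrm{span}\{D\!L_0S_0\varphi_\circ\}$, and you then pair the second row with $\varphi_\circ$. With $\|\varphi_\circ\|_{\hat H^{-1/2}(\Gamma)}=1$ this yields
\[
f(\varepsilon,0)=-\|\Psi_\circ\|_X^2\,\frac{\rho(\varepsilon)}{\lambda_\circ-\rho(\varepsilon)+\|D\!L_0S_0\varphi_\circ\|^2_{L^2(\Omega)}/(\lambda_\circ g_0\,g(\varepsilon))}\,.
\]
Its first derivative at $0$ is $-\rho_0/\lambda_\circ$, which confirms the sign above. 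Its third derivative is nonzero, e.g.\ for $\rho=\rho_0\varepsilon$ and $g\equiv g_0$, a choice admissible under \emph{H1}.

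Your mechanism $\partial_\varepsilon\varphi(0,0)=0$ is also unfounded. One has $(1-\lambda_\circ^{-1}K_0)_\perp\partial_\varepsilon\varphi(0,0)=\lambda_\circ^{-1}(1-\pi_\circ)\gamma_1^{\mathrm{in}}N_0\partial_\varepsilon u(0,0)$, which vanishes only if $\gamma_1^{\mathrm{in}}N_0D\!L_0S_0\varphi_\circ\in\mathrm{span}\{\varphi_\circ\}$.

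Be aware that the paper's own treatment of $c_{0,3}$ is incomplete too. It evaluates only the entry $(g-1)\kappa^2 1_\Omega S\!L_\kappa$, which (\ref{zero_mean}) kills. The entry $-\lambda_\circ^{-1}K_\kappa$, with $\partial_\kappa^3K_\kappa|_{\kappa=0}$ having kernel $-\frac{i}{2\pi}(x-y)\cdot\nu_x$, gives in fact $c_{0,3}=-\frac{i}{2\pi}\frac{\lambda_\circ+1/2}{\lambda_\circ}|p|^2$ with $p=\int_\Gamma y\,\varphi_\circ(y)\,d\sigma_\Gamma(y)$.

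Second, for $c_{0,2}$ you have found exactly what the paper's proof leaves out. It records only $2\frac{g_0-1}{\lambda_\circ g_0}\langle D\!L_0S_0\varphi_\circ,S\!L_0\varphi_\circ\rangle_\Omega$, which after (\ref{SL_0_id}) still has the factor $\lambda_\circ^{-1}$ missing from (\ref{c_02}). It never accounts for $-\lambda_\circ^{-1}[\varphi_\circ,\partial_\kappa^2K_\kappa|_{\kappa=0}\varphi_\circ]_{-\frac12}$, and that term does not cancel.

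Green's identity is the right tool, but you need it in both $\Omega$ and $\Omega_{\mathrm{ex}}$. Apply it to $v:=S\!L_0\varphi_\circ$ and $\Phi:=\partial_\kappa^2S\!L_\kappa\varphi_\circ|_{\kappa=0}$. The kernel of $\Phi$ is $-\frac{|x-y|}{4\pi}$, so the kernel of $\partial^2_\kappa K_\kappa|_{\kappa=0}$ is $-\frac{\partial_{\nu_x}|x-y|}{4\pi}$, not $-\frac{\partial_{\nu}|x-y|}{8\pi}$. Moreover $-\Delta\Phi=2v$, $\Phi$ has no jumps across $\Gamma$, and $v=O(|x|^{-2})$ by (\ref{zero_mean}). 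This gives $[\varphi_\circ,\partial_\kappa^2K_\kappa|_{\kappa=0}\varphi_\circ]_{-\frac12}=(2\lambda_\circ-1)\|v\|^2_{L^2(\Omega)}+(2\lambda_\circ+1)\|v\|^2_{L^2(\Omega_{\mathrm{ex}})}$, hence
\[
c_{0,2}=-\frac{2\lambda_\circ+1}{\lambda_\circ}\Big(\|S\!L_0\varphi_\circ\|^2_{L^2(\Omega_{\mathrm{ex}})}+c_0^{-1}\|S\!L_0\varphi_\circ\|^2_{L^2(\Omega)}\Big)\,.
\]
This differs from (\ref{c_02}): compare the $c_0^{-1}$ terms, using $g_0=c_0\frac{2\lambda_\circ-1}{2\lambda_\circ+1}$.

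So the obstacle you flagged is real. Formula (\ref{c_02}), like the sign in (\ref{c_10}) and the index in (\ref{c01_11}), must be corrected rather than proved.
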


\begin{proof}
Using (\ref{f_def}) one gets%
\begin{align*}
&  \left.  c_{1,0}=\frac{1}{\lambda_{\circ}}\left\langle D\!L_{0}\varphi
_{\circ},\partial_{\varepsilon}u\left(  0,0\right)  \right\rangle _{\Omega}%
-\frac{1}{\lambda_{\circ}}\left\langle S_{0}\varphi_{\circ},\gamma_{1}^{\mathrm{in}%
}N_{0}\partial_{\varepsilon}u\left(  0,0\right)  \right\rangle _{\Gamma
}\right. \\
&  \left.  +\left\langle S_{0}\varphi_{\circ},\right.  \left(  1-\frac{1}%
{\lambda_{\circ}}K_{0}\right)  \partial_{\varepsilon}\left.  \varphi\left(
0,0\right)  \right\rangle _{\Gamma}+\frac{\rho_{0}}{\lambda_{\circ}}\left\langle
S_{0}\varphi_{\circ},\varphi_{\circ}\right\rangle _{\Gamma}\right. \\
&  \left.  =\frac{1}{\lambda_{\circ}}\left\langle S_{0}\varphi_{\circ},\gamma
_{1}^{\mathrm{in}}N_{0}\partial_{\varepsilon}u\left(  0,0\right)
\right\rangle _{\Gamma}-\frac{1}{\lambda_{\circ}}\left\langle S_{0}\varphi
_{\circ},\gamma_{1}^{\mathrm{in}}N_{0}\partial_{\varepsilon}u\left(  0,0\right)
\right\rangle _{\Gamma}\right. \\
&  \left.  +\left\langle {}\right.  \left(  1-\frac{1}{\lambda_{\circ}}%
N\!P_{0}\right)  S_{0}\varphi_{\circ},\partial_{\varepsilon}\left.  \varphi\left(
0,0\right)  \right\rangle _{\Gamma}+\frac{\rho_{0}}{\lambda_{\circ}}\left\langle
S_{0}\varphi_{\circ},\varphi_{\circ}\right\rangle _{\Gamma}\right. \\
&  \left.  =\frac{\rho_{0}}{\lambda_{\circ}}\left\langle S_{0}\varphi_{\circ}%
,\varphi_{\circ}\right\rangle _{\Gamma}=\frac{\rho_{0}}{\lambda_{\circ}}\left\Vert
\varphi_{\circ}\right\Vert _{\hat{H}^{-1/2}(  \Gamma)  }^{2}\,.\right.
\end{align*}

A similar computation yields%
\[
\left.  c_{0,1}=\frac{1}{\lambda_{\circ}}\left\langle D\!L_{0}\varphi_{\circ}%
,N_{0}\partial_{\kappa}u\left(  0,0\right)  \right\rangle _{\Omega}-\frac
{1}{\lambda_{\circ}}\left\langle S_{0}\varphi_{\circ},\gamma_{1}^{\mathrm{in}}%
N_{0}\partial_{\kappa}u\left(  0,0\right)  \right\rangle _{\Gamma
}=0\,.\right.
\]
Using (\ref{var_Psi_constraint}) and (\ref{var_phi_constraint}) with $m=0$ and
$n=1$, this yields%
\[
\left.  \partial_{\kappa}Q_{\kappa}(\varepsilon)(u(\varepsilon,\kappa
),\varphi(\varepsilon,\kappa))\right\vert _{\varepsilon=\kappa=0}=%
\begin{pmatrix}
g_{0}\partial_{\kappa}u\left(  0,0\right) \\
\left(  1-\frac{1}{\lambda_{\circ}}K_{0}\right)  _{\bot}\partial_{\kappa}%
\varphi\left(  0,0\right)
\end{pmatrix}
=%
\begin{pmatrix}
0\\
0
\end{pmatrix}
\,,
\]
and hence%
\begin{equation}%
\begin{pmatrix}
\partial_{\kappa}u\left(  0,0\right) \\
\partial_{\kappa}\varphi\left(  0,0\right)
\end{pmatrix}
=%
\begin{pmatrix}
g_{0}^{-1} & \\
& \left(  1-\frac{1}{\lambda_{\circ}}K_{0}\right)  _{\bot}^{-1}%
\end{pmatrix}%
\begin{pmatrix}
0\\
0
\end{pmatrix}
=%
\begin{pmatrix}
0\\
0
\end{pmatrix}
\,. \label{D_01_id}%
\end{equation}

From%
\[
N_{0}^{\left(  1\right)  }u=\left.  \partial_{\kappa}N_{\kappa}\right\vert
_{\kappa=0}u=1_{\Omega}\left(  \left.  \partial_{\kappa}R_{\kappa}\right\vert
_{\kappa=0}\right)  1_{\Omega}^{\ast}u=\frac{i}{4\pi}\left\langle
1,u\right\rangle _{\Omega}%
\]
follows%
\begin{equation}
\gamma_{1}^{\mathrm{in}}N_{0}^{\left(  1\right)  }=0\,. \label{var_N_id}%
\end{equation}
Using (\ref{pivot}), (\ref{D_01_id}) and (\ref{var_N_id}) one gets%
\[
\left.  c_{1,1}=-\frac{1}{\lambda_{\circ}}\left\langle S_{0}\varphi_{\circ},\gamma
_{1}^{\mathrm{in}}N_{0}^{\left(  1\right)  }\partial_{\varepsilon}u\left(
0,0\right)  \right\rangle _{\Gamma}=0\,.\right.
\]
Using (\ref{var_Psi_constraint}) and (\ref{var_phi_constraint}) with $m=n=1$,
this yields%
\[
\left.  \partial_{\varepsilon}\partial_{\kappa}Q_{\kappa}(\varepsilon
)(u(\varepsilon,\kappa),\varphi(\varepsilon,\kappa))\right\vert _{\varepsilon
=\kappa=0}=%
\begin{pmatrix}
g_{0}\partial_{\varepsilon}\partial_{\kappa}u\left(  0,0\right) \\
\left(  1-\frac{1}{\lambda_{\circ}}K_{0}\right)  _{\bot}\partial_{\varepsilon
}\partial_{\kappa}\varphi\left(  0,0\right)
\end{pmatrix}
=%
\begin{pmatrix}
0\\
0
\end{pmatrix}
\,,
\]
which implies%
\begin{equation}%
\begin{pmatrix}
\partial_{\varepsilon}\partial_{\kappa}u\left(  0,0\right) \\
\partial_{\varepsilon}\partial_{\kappa}\varphi\left(  0,0\right)
\end{pmatrix}
=%
\begin{pmatrix}
0\\
0
\end{pmatrix}
\,. \label{D_11_id}%
\end{equation}

For $c_{0,2}$ we get%
\[
c_{0,2}=2\,\frac{g_{0}-1}{\lambda_{\circ}g_{0}}\left\langle DL_{0}S_{0}\varphi
_{\circ},S\!L_{0}\varphi_{\circ}\right\rangle _{\Omega}%
\]
and using (\ref{SL_0_id}), we obtain (\ref{c_02}).

Finally $c_{2,1}=0$ follows using (\ref{pivot}), (\ref{D_01_id}) and
(\ref{D_11_id}) and proceeding as above, while for $c_{0,3}\ $one has%
\[
c_{0,3}=\frac{i}{\pi}\frac{g_{0}-1}{\lambda_{\circ}g_{0}}\left\langle
1,\varphi_{\circ}\right\rangle _{\Gamma}\left\langle DL_{0}S_{0}\varphi
_{\circ},1\right\rangle _{\Omega}%
\]
and $c_{0,3}=0$ follows from (\ref{zero_mean}).
\end{proof}

Using (\ref{c01_11}), the analytic expansion of $f$ at $\left(  0,0\right)  $
reads as%
\begin{equation}
f(  \varepsilon,\kappa)  =c_{1,0}\varepsilon+\frac{1}{2}\,%
c_{2,0}\varepsilon^{2}+\frac{1}{2}\,c_{0,2}\kappa^{2}+\frac{1}{6}\,c_{1,2}%
\varepsilon\kappa^{2}+F(  \varepsilon,\kappa)  \,, \label{f_exp}%
\end{equation}
where $c_{1,2}$ can be explicitly determined in terms of the parameters
(following similar computations as the ones above). Since this not affects the
next results, we omit its computation for the sake of simplicity. The function
$F$ is analytic and
\begin{equation}
F(  \varepsilon,\kappa)  =O(  \varepsilon
^{3})  +O(  (\varepsilon^{2}+\kappa^{2})^{2})  \,.
\label{f_exp_rem}%
\end{equation}

\begin{theorem}\label{Teo-res}
There exists $\varepsilon_{0}>0$ and a unique couple of
functions $(0,\varepsilon_{0})\ni\varepsilon\mapsto\kappa_{\pm}(  \varepsilon)  $, analytic with respect to  $\sqrt{\varepsilon}$, 
\begin{equation}\label{k_plusmin_exp}
\kappa_{\pm}(\varepsilon)=\pm z_{0}\sqrt{\varepsilon}\,(1+O(  \varepsilon))\pm \sqrt{\varepsilon}\,\xi(\pm\sqrt{\varepsilon}\,)\,,\qquad |\xi(t)|\lesssim |t|^{3}
\end{equation}
such that
$$
\ker(Q_{\kappa_{\pm}(\varepsilon)}(\varepsilon))\not=\{0\}\,.
$$
Here, $\pm z_{0}$ are the two roots of the equation
$$
\frac{z^{2}}2+\frac{\rho_{0}}{\lambda_{\circ}}\left\Vert \varphi
_{\circ}\right\Vert _{\hat{H}^{-1/2}(  \Gamma)  }^{2}=0\,.
$$
\end{theorem}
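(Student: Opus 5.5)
The plan is a Lyapunov--Schmidt reduction to the scalar equation $f(\varepsilon,\kappa)=0$, followed by a rescaling $\kappa=\sqrt{\varepsilon}\,z$ and the analytic implicit function theorem in the variables $(t,z)$, with $t=\sqrt{\varepsilon}$.

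\textbf{Step 1: reduction to $f=0$.} By \eqref{AIFT_id} and \eqref{IFT 2}, for $(\varepsilon,\kappa)\in W_0$ the vector $\Psi(\varepsilon,\kappa)\neq0$ satisfies
\[
Q_\kappa(\varepsilon)\Psi(\varepsilon,\kappa)=f(\varepsilon,\kappa)\,\Psi_\circ' ,
\]
where $\Psi_\circ'$ denotes the fixed vector on the right-hand side of \eqref{IFT 2}. Hence $f(\varepsilon,\kappa)=0$ implies $\ker Q_\kappa(\varepsilon)\neq\{0\}$.

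For the converse we argue near $(0,0)$. Any element of the kernel can be normalized so that $\langle S_0\varphi_\circ,\varphi-\varphi_\circ\rangle_\Gamma=0$; this is possible because the $\varphi$-component must have a nonzero projection on $\varphi_\circ$, by continuity of the Lyapunov--Schmidt splitting. Such an element then solves $\Phi_{\varepsilon,\kappa}=0$, so by the uniqueness in the implicit function lemma it coincides with $\Psi(\varepsilon,\kappa)$, which forces $f=0$. Thus near the origin, kernel points are exactly the zeros of $f$.

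\textbf{Step 2: the scalar equation.} We solve $f=0$ using the expansion \eqref{f_exp}--\eqref{f_exp_rem}. The coefficients $c_{0,1}$, $c_{1,1}$, $c_{2,1}$ and $c_{0,3}$ vanish. In addition, the term $c_{0,3}$ could be handled even if nonzero. The real point is that $f$ is even in $\kappa$ up to the needed order, and odd powers of $\kappa$ appear only beyond the order relevant after scaling.

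Set $\varepsilon=t^2$ and $\kappa=tz$, and define $\tilde f(t,z):=t^{-2}f(t^2,tz)$. This function is analytic near $t=0$, since every term of $f$ is $O(\varepsilon+\kappa^2)$: indeed $c_{0,0}=0$ and $c_{0,1}=0$, so $f(0,\kappa)=O(\kappa^2)$.

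We obtain
\[
\tilde f(t,z)=c_{1,0}+\tfrac12 c_{0,2}z^2+O(t).
\]
The remainder is $O(t)$ rather than $O(t^2)$ because of a possible $\kappa^3$ term; since $c_{0,3}=0$ and $c_{2,1}=0$ the correction is actually $O(t^2)$, but $O(t)$ suffices.

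The statement writes the limiting equation as $\tfrac{z^2}{2}+\tfrac{\rho_0}{\lambda_\circ}\|\varphi_\circ\|^2=0$. Matching this requires normalizing by $c_{0,2}$; I would check that this is the intended normalization, or rescale $z$ accordingly. With that normalization, $\tilde f(0,z)=0$ has the two simple roots $\pm z_0$, with $z_0\neq0$ since $\rho_0\neq0$. Moreover
\[
\partial_z\tilde f(0,\pm z_0)=\pm c_{0,2}z_0\neq0,
\]
using $c_{0,2}\neq0$. This holds because $g_0\neq1$ by $cd\neq1$, and because $DL_0S_0\varphi_\circ\neq0$ by \eqref{SL_0_id} and the injectivity of $S\!L_0$.

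\textbf{Step 3: implicit function theorem.} The analytic implicit function theorem gives analytic functions $z_\pm(t)$ with $z_\pm(0)=\pm z_0$ and $\tilde f(t,z_\pm(t))=0$. Setting $\kappa_\pm(\varepsilon)=\sqrt{\varepsilon}\,z_\pm(\sqrt\varepsilon)$ yields \eqref{k_plusmin_exp}.

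The symmetry $t\mapsto-t$ maps the solution set $\{\kappa\}$ into itself, because $(t,z)\mapsto(-t,-z)$ gives the same $\kappa$. Hence $z_-(t)=-z_+(-t)$, which explains the $\pm\sqrt{\varepsilon}\,\xi(\pm\sqrt\varepsilon)$ form. The $O(\varepsilon)$ and $|t|^3$ refinements follow from the vanishing of $c_{1,1}$, $c_{2,1}$ and $c_{0,3}$, which kills the first odd corrections in $t$.

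\textbf{Uniqueness.} Every zero of $f$ in a small neighbourhood with $\varepsilon\in(0,\varepsilon_0)$ must satisfy $|\kappa|\lesssim\sqrt{\varepsilon}$. This follows since $f=c_{1,0}\varepsilon+\tfrac12c_{0,2}\kappa^2+$ higher order terms, and $c_{1,0}\neq0$. Such a zero therefore lies in the rescaled region where the implicit function theorem gives uniqueness.

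\textbf{Main obstacle.} I expect the main difficulty to be this uniqueness and converse step: showing that no other kernel points occur near the origin. It requires the a priori bound $|\kappa|^2\sim\varepsilon$ together with a compactness argument for $z$ in a bounded set, and care with the normalization in Step 1.
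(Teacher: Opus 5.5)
Your proposal is correct and follows the paper's proof. Both use the rescaling $g(t,z)=t^{-2}f(t^{2},tz)$, the implicit function theorem at $(0,\pm z_0)$ with $\partial_z g(0,\pm z_0)=\pm c_{0,2}z_0\neq0$, and the symmetry $g(-t,z)=g(t,-z)$ to get $z_-(t)=-z_+(-t)$. Your converse step and your a priori bound $|\kappa|\lesssim\sqrt{\varepsilon}$ make explicit the uniqueness that the paper leaves implicit; note that this bound rests on $c_{0,2}\neq0$, not on $c_{1,0}\neq0$.

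Your normalization worry is justified. The limiting equation is $c_{1,0}+\tfrac12 c_{0,2}z^{2}=0$, so $z_0^{2}=-2c_{1,0}/c_{0,2}$. The paper's proof, like the statement, drops the factor $c_{0,2}$ when it asserts $g(0,z_\pm)=0$ for the roots of $c_{1,0}+\tfrac12 z^{2}=0$.
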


\begin{proof}
Let $\left(  t,z\right)  $ be such that $\left(  t^{2},tz\right)  \in W_{0}$
and define
\begin{equation}\label{f-g}
g(t,z):=t^{-2}f(t^{2},tz)\,.
\end{equation}
By (\ref{f_exp}), there follows%
\[
g\left(  t,z\right)  =c_{1,0}+\frac{1}{2}c_{2,0}t^{2}+\frac{1}{2}%
c_{0,2}\,z^{2}+\frac{1}{6}c_{1,2}\,t^{2}z^{2}+\mathcal{O}\left(  t^{4}\right)
+\mathcal{O}\left(  t^{2}(t^{2}+z^{2})^{2}\right)
\]
and $g$ is analytic. Let 
$$
z_{\pm}=\pm z_{0}\,,\qquad z_{0}=\begin{cases}\sqrt{2\,|c_{1,0}|}&c_{1,0}<0\\
i\sqrt{2\,c_{1,0}}&c_{1,0}>0\,,
\end{cases}
$$
 be the two roots of the equation
\[
c_{1,0}+\frac{1}{2}\,z^{2}=0\,.
\]
One has
$$
g(0,z_{\pm})=0\,,\qquad\partial_{z}g(0,z_{\pm})=c_{0,2}\,z_{\pm}%
\not =0\,,\qquad
$$
$$
\partial_{t}g(0,z_{\pm})=0\,,\qquad\partial_{t}^{2}g(0,z_{\pm
})=c_{2,0}+\frac{1}{3}\,c_{1,2}\,z_{\pm}^{2}=c_{2,0}-\frac{2}{3}%
\,c_{1,0}c_{1,2}\,.
$$
By the implicit function theorem, there exists two unique analytic functions
$$
t\mapsto z_{\pm}(t)=z_{\pm}-\frac{\partial_{t}^{2}g(0,z_{\pm})}%
{2\partial_{z}g(0,z_{\pm})}\,t^{2}+\xi_{\pm }(t)\,,\qquad |\xi_{\pm }(t)|\lesssim |t|^{3}\,,
$$
such that
\[
g(t,z_{\pm}(t))=0\,.
\]
By $g(-t,z)=g(t,-z)$, one has $g(t,-z_{\pm}(-t))=0$ and so, by $z_{\pm}(0)=\pm z_{0}$ and by unicity, one gets
$$
z_{-}(t)=-z_{+}(-t)\,.
$$
Therefore, 
$$
z_{\pm}(t)=\pm z(\pm t)\,,
$$
where
\begin{align*}
  z(t)=&z_{0}\left(  1-\frac{1}{2c_{2,0}z_{0}^{2}}\left(
c_{2,0}-\frac{2}{3}\,c_{1,0}c_{1,2}\right)  t^{2}\right)  +\xi(t)\\
  =&z_{0}\left(  1+\frac{1}{4c_{2,0}c_{1,0}}\left(  c_{2,0}-\frac
{2}{3}\,c_{1,0}c_{1,2}\right)  t^{2}\right)  +\xi(t)\\
 =&z_{0}\left(  1-\frac{1}{4}\left(  \frac{1}{c_{1,0}}-\frac{2}%
{3}\,\frac{c_{1,2}}{c_{2,0}}\right)  t^{2}\right)  +\xi(t)\,,\qquad |\xi(t)|\lesssim |t|^{3}\,.
\end{align*}
By \eqref{f-g},  for $\varepsilon>0$ sufficiently small,
\[
k_{\pm}(\varepsilon)=\pm\sqrt{\varepsilon}z(\pm\sqrt{\varepsilon}\,)=\pm z_{0}%
\sqrt{\varepsilon}\left(  1-\frac{1}{4}\left(  \frac{1}{c_{1,0}}-\frac{2}%
{3}\,\frac{c_{1,2}}{c_{2,0}}\right)  \varepsilon\right)  \pm\sqrt\varepsilon \,\xi(\pm\sqrt\varepsilon\,)
\]
solves
\[
f(\varepsilon,k_{\pm}(\varepsilon))=0\,.
\]
\end{proof}

\begin{remark}\label{Rem-res}
Case 1) If $\rho_{0}/\lambda_{\circ}<0$ then 
$$
\kappa_{\pm}(\varepsilon)=
\pm \sqrt{2}\,\left|{\rho_{0}}/
\lambda_{\circ}\right|^{1/2} \|\varphi_{\circ}\| _{\hat{H}^{-1/2}(
\Gamma)  }\sqrt{\varepsilon}\,(1+O(  \varepsilon))\pm \sqrt{\varepsilon}\,\xi(\pm\sqrt{\varepsilon}\,)
$$
Case 2) If $\rho_{0}/\lambda_{\circ}>0$ then 
$$
\kappa_{\pm}(\varepsilon)=
\pm i\sqrt{2}\,\left({\rho_{0}}/
\lambda_{\circ}\right)^{1/2} \|\varphi_{\circ}\| _{\hat{H}^{-1/2}(
\Gamma)  }\sqrt{\varepsilon}\,(1+O(  \varepsilon))\pm \sqrt{\varepsilon}\,\xi(\pm\sqrt{\varepsilon}\,)
$$
We take in account Remark \ref{rem-res}. In Case 1, since there are no positive eigenvalues, both $\kappa^{2}_{\pm}(\varepsilon)$ are resonances.
In Case 2, $\kappa^{2}_{-}(\varepsilon)$ is a resonance and $\kappa^{2}_{+}(\varepsilon)$ is a negative eigenvalue. Hence, one has the following scenery:

\item{case $\lambda_{\circ}>0$)}  the shift of  $\widetilde d^{-1}$ from $\lambda_{\circ}$ to the right ($\rho_{0}<0$)  generates two branches of resonances,  emanating from the zero eigenvalue. Conversely, the shift of  $\widetilde d^{-1}$ from $\lambda_{\circ}$ to the left ($\rho_{0}>0$)  generates two branches, one consisting of negative eigenvalues and the other of resonances. 

\item{case $\lambda_{\circ}<0$)} the previous results are reversed: the shift to the left   generates two branches of resonances and the shift to the right generates one branch consisting of negative eigenvalues and another one consisting of resonances.
\vskip5pt\noindent
Summing up: if $\widetilde d^{-1}(\varepsilon)/\lambda_{\circ}>1$, then two branches of resonances are created. Otherwise,  if $\widetilde d^{-1}(\varepsilon)/\lambda_{\circ}<1$,  then two branches, one consisting of negative eigenvalues and the other of resonances, are created.
 \end{remark}

\end{document}